\title{Model Theory of Fields with Virtually Free Group Actions}
\author[\"{O}. BEYARSLAN]{\"{O}zlem Beyarslan$^{\dagger}$}
\address{$^{\dagger}$Bo\v{g}azi\c{c}i \"{U}niversitesi}
\email{ozlem.beyarslan@boun.edu.tr}
\author[P. KOWALSKI]{Piotr Kowalski$^{\spadesuit}$}
\thanks{$^{\spadesuit}$ Supported by T\"{u}bitak grant 2221 and by the Narodowe Centrum Nauki grants no. 2015/19/B/ST1/01150 and 2015/19/B/ST1/01151.}
\address{$^{\spadesuit}$Instytut Matematyczny\\
Uniwersytet Wroc{\l}awski\\
Wroc{\l}aw\\
Poland}
\email{pkowa@math.uni.wroc.pl} \urladdr{http://www.math.uni.wroc.pl/\textasciitilde pkowa/ }
\thanks{2010 \textit{Mathematics Subject Classification} Primary 03C60; Secondary 03C45, 12H10, 20E08.}
\thanks{\textit{Key words and phrases}. Transformal kernel, model companion, virtually free group, graph of groups, simple theory.}
\DeclareMathOperator{\locus}{locus}
 \DeclareMathOperator{\aut}{Aut} \DeclareMathOperator{\id}{id}
 \DeclareMathOperator{\fr}{Fr} 
\DeclareMathOperator{\im}{im}  \DeclareMathOperator{\gal}{Gal}
 \DeclareMathOperator{\theo}{Th}\DeclareMathOperator{\alg}{alg}
\DeclareMathOperator{\coli}{\underrightarrow{\lim}}
\DeclareMathOperator{\spec}{Spec}\DeclareMathOperator{\rat}{rat}
\DeclareMathOperator{\acfa}{ACFA}\DeclareMathOperator{\tcf}{TCF}
\DeclareMathOperator{\ntp}{NTP}
\newtheorem{theorem}{Theorem}[section]
\newtheorem{prop}[theorem]{Proposition}
\newtheorem{lemma}[theorem]{Lemma}
\newtheorem{cor}[theorem]{Corollary}
\newtheorem{fact}[theorem]{Fact}
\newtheorem{conjecture}[theorem]{Conjecture}
\theoremstyle{definition}
\newtheorem{definition}[theorem]{Definition}
\newtheorem{example}[theorem]{Example}
\newtheorem{remark}[theorem]{Remark}
\newtheorem{question}[theorem]{Question}
\newtheorem{assumption}[theorem]{Assumption}
\begin{document}
\newcommand{\lili}{\underleftarrow{\lim }}
\newcommand{\coco}{\underrightarrow{\lim }}
\newcommand{\twoc}[3]{ {#1} \choose {{#2}|{#3}}}
\newcommand{\thrc}[4]{ {#1} \choose {{#2}|{#3}|{#4}}}
\newcommand{\Zz}{{\mathds{Z}}}
\newcommand{\Ff}{{\mathds{F}}}
\newcommand{\Cc}{{\mathds{C}}}
\newcommand{\Rr}{{\mathds{R}}}
\newcommand{\Nn}{{\mathds{N}}}
\newcommand{\Qq}{{\mathds{Q}}}
\newcommand{\Kk}{{\mathds{K}}}
\newcommand{\Pp}{{\mathds{P}}}
\newcommand{\ddd}{\mathrm{d}}
\newcommand{\Aa}{\mathds{A}}
\newcommand{\dlog}{\mathrm{ld}}
\newcommand{\ga}{\mathbb{G}_{\rm{a}}}
\newcommand{\gm}{\mathbb{G}_{\rm{m}}}
\newcommand{\gaf}{\widehat{\mathbb{G}}_{\rm{a}}}
\newcommand{\gmf}{\widehat{\mathbb{G}}_{\rm{m}}}
\newcommand{\ka}{{\bf k}}
\newcommand{\ot}{\otimes}
\newcommand{\si}{\mbox{$\sigma$}}
\newcommand{\ks}{\mbox{$({\bf k},\sigma)$}}
\newcommand{\kg}{\mbox{${\bf k}[G]$}}
\newcommand{\ksg}{\mbox{$({\bf k}[G],\sigma)$}}
\newcommand{\ksgs}{\mbox{${\bf k}[G,\sigma_G]$}}
\newcommand{\cks}{\mbox{$\mathrm{Mod}_{({A},\sigma_A)}$}}
\newcommand{\ckg}{\mbox{$\mathrm{Mod}_{{\bf k}[G]}$}}
\newcommand{\cksg}{\mbox{$\mathrm{Mod}_{({A}[G],\sigma_A)}$}}
\newcommand{\cksgs}{\mbox{$\mathrm{Mod}_{({A}[G],\sigma_G)}$}}
\newcommand{\crats}{\mbox{$\mathrm{Mod}^{\rat}_{(\mathbf{G},\sigma_{\mathbf{G}})}$}}
\newcommand{\crat}{\mbox{$\mathrm{Mod}^{\rat}_{\mathbf{G}}$}}
\newcommand{\cratinv}{\mbox{$\mathrm{Mod}^{\rat}_{\mathbb{G}}$}}
\newcommand{\ra}{\longrightarrow}
\newcommand{\gtcf}{\mbox{$G-\tcf$}}


\maketitle
\begin{abstract}
For a group $G$, we define the notion of a \emph{$G$-kernel} and show that the properties of $G$-kernels are closely related with the existence of a model companion of the theory of Galois actions of $G$. Using Bass-Serre theory, we show that this model companion exists for virtually free groups generalizing the existing results about free groups and finite groups. We show that the new theories we obtain are not simple and not even NTP$_2$.
\end{abstract}

\section{Introduction}
In this paper, we present a geometric axiomatization for the model companion of the theory of fields with actions of a fixed group $G$, where $G$ is of some specific type. Note that such an axiomatization is impossible in the case of $G=\Zz\times \Zz$,
see \cite{Kikyo1}. On the positive side, such an axiomatization is well-known in the case of $G=\Zz$ (the theory ACFA, see \cite{acfa1}), more generally in the case of $G=F_n$ (the theory $\acfa_n$, see \cite{Hr9}, \cite{KiPi}, \cite[Theorem 16]{Sjo} and \cite[Proposition 4.12]{MS2}) and finally for $G=\Qq$ (the theory $\Qq$ACFA, see \cite{med1}). It is also known to exist in the case of a finite group $G$, giving the theory $\gtcf$, see \cite{Sjo} and \cite{HK3}.

The aim of this paper is to put the results about free groups and about the finite groups in a natural common context. We consider the case of a finitely generated group $G$ having a free subgroup of finite index, i.e. a \emph{virtually free} group $G$. The class of virtually free finitely generated groups contains many interesting examples including the infinite dihedral group $D_{\infty}$ (the group which inspired our investigations) and, more generally, the groups of the form $G\ast H$, where $G$ and $H$ are finite.

To find our axioms, we analyze some kind of ``geometric prolongation process'' which allows to extend partial endomorphisms of fields to automorphisms of some bigger fields. This process is (slightly) visible e.g. in the proof of Theorem (1.1) from \cite{acfa1}, where one finds the following sentence:
\smallskip
\\
``By definition of $\sigma(U)$, $\sigma$ extends to an isomorphism from $K(a)$ onto $K(b)$ which
sends $a$ to $b$; this $\sigma$ in turn extends to an automorphism of $L$.''
\smallskip
\\
Our prolongation process may be seen as a constructive explanation of the part: ``this $\sigma$ in turn extends to an automorphism of $L$'' (the afore-mentioned isomorphism between $K(a)$ and $K(b)$ is called ``this $\sigma$'' above). We describe geometric conditions which allow our inductive extension process. Finding these conditions is rather easy in the case of ACFA (and also in the case of ACFA$_n$), but becomes more involved in the case where the acting group is not free.

We formalize the above process by introducing the notions of a \emph{transformal kernel} and its \emph{prolongation}. These notions were inspired by their differential counterparts from \cite{Lando} (and they were actually modeled on the difference case from \cite{cohn}). The transformal kernel above is actually the ring embedding $K(a)\to K(a,b)$, which maps $a$ to $b$ and extends $\sigma$ on $K$. The automorphisms of $L$ mentioned above is a prolongation of this transformal kernel. We formulate a general scheme of axioms which (in the case when a $G$-prolongation process exists) gives geometric axioms of the theory $\gtcf$, the model companion of the theory of fields with actions of $G$ by automorphisms (Theorem  \ref{algproofgen}).

In the case of a finitely generated virtually free group $G$, the Bass-Serre theory (see \cite{Dicks}) gives a convenient description of the group $G$. Namely, $G$ can be obtained from finite groups by the amalgamated free product construction (over a certain finite tree), followed by a finite sequence of HNN-extensions. For groups obtained in such a way, we show that a $G$-prolongation process exists, thus the theory $\gtcf$ exists as well (Theorem \ref{mainthm}).

We list below the main results of this paper.
\begin{itemize}
\item We provide a general framework of axiom schemes for the theories of universal domains of $G$-fields (Theorem \ref{algproofgen}).

\item Using the framework above, we show that for a finitely generated virtually free group $G$, the theory $\gtcf$ exists (Theorem \ref{mainthm}).
\end{itemize}

The theory ACFA (called $\Zz-$TCF in our terminology) turned out to be very useful for applications outside model theory, notably in diophantine geometry (see e.g. \cite{Hr9} and \cite{Sc1}) and in algebraic dynamics (see e.g. \cite{ChaHr1}, \cite{ChaHr2} and \cite{medsc}). It is also very important for applications to show that a particular difference field, an algebraically closed field with ``non-standard Frobenius'', is a model of ACFA (see \cite{HrFro}). The theory ACFA is \emph{simple}, which was crucial for the above applications, since simple theories enjoy a nice theory of geometric interactions between definable sets.
Although our new theories are not simple (Theorem \ref{nonntp}), there is evidence that these theories are NSOP$_1$. The class of NSOP$_1$ theories has been recently analyzed in \cite{KapRam} and it is shown there that these theories provide a natural generalization of simple theories and that they have many properties of simple theories, which were important for applications in the case of ACFA. Hence, knowing that our new theories are NSOP$_1$ may result in similar applications. It is possible that finding natural models of the theories $\gtcf$ may yield applications as well, and such models were already found in the case of finite groups (see \cite[Cor. 3.31]{HK3}).

Virtually free groups have a surprising number of characterizations coming from different branches of mathematics, see e.g. Introduction in \cite{arauja}. We list some of them below:
\begin{itemize}
\item fundamental groups of finite graphs of finite groups;

\item groups that are recognized by pushdown automata;

\item groups whose Cayley graphs have finite tree width.
\end{itemize}
It would be very interesting (and rather unexpected) to have one more characterization coming from model theory, and we conjecture (Conjecture \ref{conj1}) that this is the case.

The paper is organized as follows. In Section \ref{secgen}, we introduce the notion of a transformal kernel and show how the properties of such kernels are related to the existence of a model companion of the theory of transformal fields. In Section \ref{secvfg}, using the Bass-Serre theory, we show that $\gtcf$ exists for a finitely generated, virtually free group $G$. In Section \ref{secmt}, we discuss the model-theoretic properties of the theories $\gtcf$ we have obtained. In Section \ref{seclast}, we give a new example of a group $G$ for which the theory $\gtcf$ does not exist and formulate a conjecture about the existence of the theory $\gtcf$ for an arbitrary group $G$.

\section{Transformal kernels and model companions}\label{secgen}

In this section, we introduce the notion of a \emph{transformal kernel} (modeled on the notion of a \emph{differential kernel} from \cite{Lando}), and show a  close relation between the properties of transformal kernels and the existence of model companion. Similar relations exist in the characteristic $0$ differential case (see \cite{OSan}), the positive characteristic differential case (see \cite{K2}) and the Hasse-Schmidt differential case (see \cite{K3}).

\subsection{Functor}
Assume that $(K,\sigma)$ is a difference field and $V,W$ are varieties over $K$. By a \emph{variety}, we always mean a $K$-irreducible $K$-reduced algebraic subvariety of $\Aa_K^n$ for some $n>0$. Hence, a variety is basically the same as a prime ideal in the ring $K[X_1,\ldots,X_n]$ for some $n>0$. When we extend the map $\sigma$ to an automorphism of the ring $K[X_1,\ldots,X_n]$ (mapping each $X_i$ to $X_i$), then it also acts on the prime spectrum of $K[X_1,\ldots,X_n]$, so we get the varieties $^{\sigma}V,^{\sigma}W$. For each $K$-morphism $\varphi:V\to W$, we get a corresponding $K$-morphism $^{\sigma}\varphi:{}^{\sigma}V\to {}^{\sigma}W$. It is easy to check that we have obtained an endo-functor on the category of $K$-varieties. We also get the following ring isomorphism (but not a $K$-algebra homomorphism!)
$$\sigma_V:K[V]\to K[^{\sigma}V]$$
extending $\sigma$ on $K$.
\begin{remark}
We use the notation $^{\sigma}V$ rather than the (possibly more popular) notation $V^{\sigma}$, since we want to emphasize that $\sigma$ always acts \emph{on the left} (as the functions usually do). It will become important later, when we are going to consider at the same time several automorphisms of $K$.
\end{remark}
We also have a map (denoted by the same symbol)
$$\sigma_V:V(K)\to {}^{\sigma}V(K)$$
which can be understood in this (rather naive) context, as applying $\sigma$ coordinate-wise. This map is \emph{not} a morphism (it would be a morphism in the category of \emph{difference varieties}, but we are not going into this direction here). However, it is still a natural map, in the sense that for any morphism $\varphi:V\to W$, the following diagram is commutative
\begin{equation*}
 \xymatrix{  V(K)  \ar[rr]^{\varphi_K} \ar[d]^{\sigma_V} &  &  W(K) \ar[d]^{\sigma_W} \\
{}^{\sigma}V(K) \ar[rr]^{\left({}^{\sigma}\varphi\right)_K} &  &  {}^{\sigma}W(K).}
\end{equation*}
\begin{lemma}\label{l0}
On the level of coordinate rings, we have the following commutative diagram:
\begin{equation*}
 \xymatrix{  K[V]  \ar[d]^{\sigma_V} &  &  K[W]\ar[ll]_{\varphi^*}  \ar[d]^{\sigma_W} \\
K[{}^{\sigma}V]  &  &  K[{}^{\sigma}W]\ar[ll]_{\left({}^{\sigma}\varphi\right)^*}.}
\end{equation*}
\end{lemma}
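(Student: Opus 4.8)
The plan is to unwind the definitions of all four maps in terms of the underlying polynomial rings and then chase the algebra generators of $K[W]$ around the square. Fix presentations $V\subseteq \Aa_K^n$ and $W\subseteq \Aa_K^m$, so that $K[V]=K[X_1,\ldots,X_n]/I_V$ and $K[W]=K[Y_1,\ldots,Y_m]/I_W$ for prime ideals $I_V,I_W$. Let $\tilde{\sigma}$ denote the common extension of $\sigma$ to a ring automorphism of the polynomial rings which fixes each variable. By construction ${}^{\sigma}V$ and ${}^{\sigma}W$ are cut out by $\tilde{\sigma}(I_V)$ and $\tilde{\sigma}(I_W)$, and the vertical maps $\sigma_V,\sigma_W$ are precisely the isomorphisms induced by $\tilde{\sigma}$ on the quotients.

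Next I would make the two horizontal maps explicit. A $K$-morphism $\varphi\colon V\to W$ is the same datum as the $K$-algebra homomorphism $\varphi^*$, which is determined by the images of the coordinate generators, say $\varphi^*(Y_j+I_W)=f_j+I_V$ for some $f_j\in K[X_1,\ldots,X_n]$. Lifting to the polynomial rings, $\varphi$ is represented by the $K$-algebra map $\Phi\colon Y_j\mapsto f_j$ with $\Phi(I_W)\subseteq I_V$. By the very definition of the endofunctor ${}^{\sigma}(-)$ on morphisms, ${}^{\sigma}\varphi$ is represented by the conjugated map $\tilde{\sigma}\circ\Phi\circ\tilde{\sigma}^{-1}$; concretely $\left({}^{\sigma}\varphi\right)^*\!\left(Y_j+\tilde{\sigma}(I_W)\right)=\tilde{\sigma}(f_j)+\tilde{\sigma}(I_V)$, and this descends correctly because $\tilde{\sigma}\big(\Phi(I_W)\big)\subseteq\tilde{\sigma}(I_V)$.

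With these descriptions the commutativity is a one-line chase on generators. Starting from $Y_j+I_W$, the path $\sigma_V\circ\varphi^*$ gives $\sigma_V(f_j+I_V)=\tilde{\sigma}(f_j)+\tilde{\sigma}(I_V)$, while the path $\left({}^{\sigma}\varphi\right)^*\!\circ\sigma_W$ gives $\left({}^{\sigma}\varphi\right)^*\!\left(Y_j+\tilde{\sigma}(I_W)\right)=\tilde{\sigma}(f_j)+\tilde{\sigma}(I_V)$, since $\tilde{\sigma}$ fixes $Y_j$. The two agree. Because the classes $Y_j+I_W$ generate $K[W]$ as a $K$-algebra, it remains only to check that both composites restrict to the same map on scalars: each sends $c\in K$ to $\sigma(c)$, as $\varphi^*$ and $\left({}^{\sigma}\varphi\right)^*$ are $K$-algebra maps while $\sigma_V,\sigma_W$ act as $\sigma$ on $K$. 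Hence the two ring homomorphisms $K[W]\to K[{}^{\sigma}V]$ coincide.

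I do not expect a genuine obstacle here; the lemma is a bookkeeping statement. The only point demanding care is the $\sigma$-twist: since $\sigma_V$ and $\sigma_W$ are ring isomorphisms that are \emph{not} $K$-linear, one must track the action on scalars separately from the action on generators, and one must use the precise definition of ${}^{\sigma}\varphi$ (conjugation by $\tilde{\sigma}$) rather than a naive ``apply $\sigma$'' heuristic. Alternatively, the statement could be obtained by dualizing the already-established commutative square of $K$-points, but the direct generator chase above is cleaner and avoids having to argue that the point functor is faithful enough to transport commutativity.
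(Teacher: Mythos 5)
Your generator chase is correct, and it is precisely the routine verification intended here: the paper states Lemma \ref{l0} without proof, treating it as immediate from the definitions (the only ingredient it records elsewhere, in the proof of Lemma \ref{l1}, is that $I({}^{\sigma}V)=I(V)^{\sigma}$ and $\sigma_V(X_i+I(V))=X_i+I({}^{\sigma}V)$, which is exactly your description of $\sigma_V$ as induced by $\tilde{\sigma}$). Your care with the non-$K$-linearity of $\sigma_V,\sigma_W$ and with the definition of ${}^{\sigma}\varphi$ as conjugation by $\tilde{\sigma}$ is exactly the right point to isolate.
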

The commutative diagram in the above lemma will be the source of commutativity of many diagrams which will be used later.

We will need one more easy result about the map $\sigma_V$.
\begin{lemma}\label{l1}
Suppose $K\subset \Omega$ is a field extension, and $a\in V(\Omega),b\in {}^{\sigma}V(\Omega)$ be generic points over $K$. Then after the natural $K$-algebra identifications:
$$K[a]\cong K[V],\ \ K[b]\cong K[{}^{\sigma}V],$$
we have $\sigma_V(a)=b$.
\end{lemma}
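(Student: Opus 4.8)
The plan is to unwind every identification in sight and reduce the statement to the single observation that the ring automorphism $\sigma$ of $K[X_1,\ldots,X_n]$ fixes each variable $X_i$.

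First I would make the definitions explicit. By construction $K[V] = K[X_1,\ldots,X_n]/I(V)$ and $K[{}^{\sigma}V] = K[X_1,\ldots,X_n]/I({}^{\sigma}V)$, where $I(V)$ denotes the vanishing ideal; and the functor produces ${}^{\sigma}V$ precisely by letting $\sigma$, extended to $K[X_1,\ldots,X_n]$ with $\sigma(X_i)=X_i$, act on the prime spectrum, so that $I({}^{\sigma}V) = \sigma(I(V))$. Consequently $\sigma_V \colon K[V] \to K[{}^{\sigma}V]$ is exactly the isomorphism induced on quotients by this $\sigma$, and in particular it sends the residue class $\bar X_i$ of $X_i$ in $K[V]$ to the residue class $\bar X_i$ of $X_i$ in $K[{}^{\sigma}V]$.

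Next I would spell out the genericity hypotheses. Since $a\in V(\Omega)$ is generic over $K$, the evaluation homomorphism $K[X_1,\ldots,X_n]\to\Omega$, $X_i\mapsto a_i$, has kernel exactly $I(V)$ and therefore induces the natural $K$-algebra isomorphism $\iota_a\colon K[V]\to K[a]$ with $\iota_a(\bar X_i)=a_i$; likewise genericity of $b\in {}^{\sigma}V(\Omega)$ yields $\iota_b\colon K[{}^{\sigma}V]\to K[b]$ with $\iota_b(\bar X_i)=b_i$. Transporting $\sigma_V$ along these isomorphisms turns it into a ring isomorphism $K[a]\to K[b]$, and the computation $\iota_b\bigl(\sigma_V(\iota_a^{-1}(a_i))\bigr) = \iota_b(\sigma_V(\bar X_i)) = \iota_b(\bar X_i) = b_i$ shows that the transported map sends $a_i$ to $b_i$ for every $i$, i.e. $\sigma_V(a)=b$ after the stated identifications.

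I do not expect a genuine obstacle here, as this is essentially a bookkeeping lemma feeding the commutativity of the diagrams to come. The one point to handle with care is that $\sigma_V$ is only a ring homomorphism, not a $K$-algebra homomorphism, so one must keep the $\sigma$-twist on the coefficients from $K$ conceptually separate from the action on the variables; the entire content of the lemma is that this twist is invisible on the coordinate functions themselves, because $\sigma$ fixes the $X_i$. It is also worth noting that the two generic points $a$ and $b$ need not be related a priori — the lemma is precisely the assertion that the abstract isomorphism $\sigma_V$ realizes the correspondence $a\mapsto b$ — but this causes no difficulty, since both identifications $\iota_a,\iota_b$ are pinned down by sending $\bar X_i$ to the respective coordinate.
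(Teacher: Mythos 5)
Your proposal is correct and follows exactly the paper's argument: represent $K[V]$ as $K[\bar X]/I(V)$, note that $\sigma_V$ sends $X_i+I(V)$ to $X_i+I({}^{\sigma}V)$, and observe that the generic-point identifications send these residue classes to $a_i$ and $b_i$ respectively. You have simply written out the bookkeeping that the paper leaves implicit.
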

\begin{proof}
If we represent $K[V]$ as $K[\bar{X}]/I(V)$, then $I({}^{\sigma}V)=I(V)^{\sigma}$ and the map $\sigma_V$ is induced by $\sigma$ on $K$ and
$$\sigma_V(X_i+I(V))=X_i+I({}^{\sigma}V).$$
Hence, we get $\sigma_V(a)=b$ after the natural $K$-algebra identifications.
\end{proof}

\subsection{Transformal kernels}\label{secacfa}
Let us fix a difference field $(K,\sigma)$. We introduce now the main definition of this section.
\begin{definition}\label{kerdef}
\begin{enumerate}
\item A \emph{transformal kernel} or a \emph{$\Zz$-kernel} (with respect to the difference field $(K,\sigma)$) is a tower of fields $K\subseteq L\subseteq L'$ together with a field homomorphism $\sigma':L\to L'$ such that $\sigma'|_K=\sigma$ and $L'=L\sigma'(L)$.

\item We denote the transformal kernel as in item $(1)$ by $(L,L',\sigma')$.

\item A transformal kernel $(L',L'',\sigma'')$ is a \emph{prolongation} of a transformal kernel $(L,L',\sigma')$, if $\sigma''|_{L}=\sigma'$.
\end{enumerate}
\end{definition}
\begin{remark}
Clearly, a difference field extension $(K,\sigma)\subseteq (L,\sigma')$ gives the transformal kernel $(L,L,\sigma')$ which is ``the best one'', and it is its own prolongation.
\end{remark}
We will now investigate a close relation between transformal kernels and the theory ACFA. We note first a rather obvious result and hint on its well-known (non-constructive, though) proof. Actually, a constructive version of its proof, which we will provide later, will make clear the connection between prolongations of transformal kernels and the theory ACFA.
\begin{prop}\label{noncons}
Each transformal kernel $(L,L',\sigma')$ has a prolongation which is a difference field.
\end{prop}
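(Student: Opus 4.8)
The plan is to realize $\sigma'$ as the restriction of a genuine automorphism of a large ambient field; this is precisely the non-constructive step alluded to in the Introduction (``this $\sigma$ in turn extends to an automorphism of $L$''). Note first that, being a field homomorphism, $\sigma'\colon L\to L'$ is injective, hence an isomorphism of $L$ onto the subfield $\sigma'(L)\subseteq L'$. The entire content of the proposition is to convert this isomorphism between two subfields into a single automorphism of one field containing everything in sight; once such an automorphism is available, restricting it along the tower generated by its iterates delivers the desired prolongation.

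First I would fix an algebraically closed field $\Omega\supseteq L'$ that is rich enough to be strongly homogeneous over subsets of size $\le|L'|$ --- for instance a monster model of $\mathrm{ACF}$ of the characteristic of $K$, or simply an algebraically closed field whose transcendence degree over the prime field exceeds $|L'|$. Inside $\Omega$ both $L$ and $\sigma'(L)$ are subfields of cardinality at most $|L'|$, and $\sigma'$ is an isomorphism between them. By quantifier elimination in $\mathrm{ACF}$ an isomorphism between subfields is a partial elementary map, so strong homogeneity of $\Omega$ lets me extend $\sigma'$ to an automorphism $\hat{\sigma}\in\aut(\Omega)$. Then $(\Omega,\hat{\sigma})$ is a difference field with $\hat{\sigma}|_K=\sigma$ and $\hat{\sigma}|_L=\sigma'$; identifying $L'=L\,\sigma'(L)$ with its copy in $\Omega$, the chain $L'\subseteq L'\hat{\sigma}(L')\subseteq\cdots$ consists of prolongations whose union is $(\Omega,\hat{\sigma})$, exhibiting a prolongation of $(L,L',\sigma')$ that is a difference field.

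The only real obstacle is the extension step, namely upgrading the isomorphism $\sigma'\colon L\to\sigma'(L)$ to an \emph{automorphism} of $\Omega$ rather than merely to an injective endomorphism. This is exactly where non-constructivity enters: it rests on $\Omega$ being algebraically closed and homogeneous (equivalently, on a back-and-forth argument matching transcendence bases of $\Omega$ over $L$ and over $\sigma'(L)$), and it is the point at which surjectivity must be forced by hand. Securing an automorphism rather than an embedding is essential for the target difference field to behave like a model of $\acfa$, and it is precisely this non-explicit passage that the later constructive analysis, via prolongations of transformal kernels, is designed to make effective.
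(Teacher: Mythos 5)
Your argument is correct and is essentially the paper's own (the paper only gestures at ``transcendence bases and the basic fact about actions of $\Zz$ on sets,'' which is exactly the back-and-forth you invoke via strong homogeneity of a large model of ACF). One harmless slip: the union of the chain $L'\subseteq L'\hat{\sigma}(L')\subseteq\cdots$ is a $\hat{\sigma}$-invariant subfield of $\Omega$, not all of $\Omega$, but that union is still the desired difference-field prolongation.
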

\begin{proof}
The argument is standard, one uses transcendence bases and the corresponding basic fact about actions of $\Zz$ on (pure) \emph{sets}.
\end{proof}
We recall that all the varieties considered in this section are $K$-irreducible affine algebraic varieties over $K$.
\begin{definition}
We call a pair of varieties $(V,W)$ a \emph{$\Zz$-pair}, if $W\subseteq V\times {}^{\sigma}V$ and both the projections
$$W\to V,\ \ W\to {}^{\sigma}V$$
are dominant.
\end{definition}
\begin{lemma}\label{lemma1}
Any $\Zz$-pair $(V,W)$ gives a $\Zz$-kernel of the form $(K(V),K(W),\sigma')$. Moreover, there is a $K$-generic point $a\in V(K(V))$ such that $(a,\sigma'(a))$ is a $K$-generic point of $W$ (hence $\sigma'(a)$ is a $K$-generic point of ${}^{\sigma}V$).
\end{lemma}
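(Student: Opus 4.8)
The plan is to read the two projections of the $\Zz$-pair at the level of function fields and to build $\sigma'$ by combining them with the non-$K$-linear isomorphism $\sigma_V$ of Lemma \ref{l1}. Since $W \subseteq V \times {}^\sigma V$ and both projections $\pi_1 \colon W \to V$, $\pi_2 \colon W \to {}^\sigma V$ are dominant, their comorphisms are $K$-algebra embeddings $\pi_1^* \colon K(V) \hookrightarrow K(W)$ and $\pi_2^* \colon K({}^\sigma V) \hookrightarrow K(W)$. I would work inside $K(W)$ with the canonical $K$-generic point $(a,b)$ of $W$ (coordinates the images of the coordinate functions), so that $a$ is the image under $\pi_1^*$ of the generic point of $V$ and $b$ the image under $\pi_2^*$ of the generic point of ${}^\sigma V$. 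Dominance of $\pi_1$ and $\pi_2$ makes $a$ a $K$-generic point of $V$ and $b$ a $K$-generic point of ${}^\sigma V$; moreover $K(a) = \pi_1^*(K(V))$, which I identify with $K(V)$, so that $a \in V(K(V))$ as demanded.

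Next I would set $L := K(a) = \pi_1^*(K(V))$ and $L' := K(W)$, and define
$$\sigma' := \pi_2^* \circ \sigma_V \colon K(V) \longrightarrow K(W).$$
This is a field homomorphism, and because $\sigma_V|_K = \sigma$ while $\pi_2^*$ is a $K$-algebra map, we get $\sigma'|_K = \sigma$, as required. The key computation is that $\sigma'(a) = b$: by Lemma \ref{l1} the map $\sigma_V$ sends the generic point $a$ of $V$ to the generic point of ${}^\sigma V$ (equivalently, $\sigma_V$ sends each coordinate function of $V$ to the corresponding coordinate function of ${}^\sigma V$), and $\pi_2^*$ then carries that generic point to $b$; hence $\sigma'(a) = \pi_2^*(\sigma_V(a)) = b$.

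Finally I would verify the remaining conditions. For the kernel condition $L' = L\,\sigma'(L)$: since $W$ is a closed subvariety of $V \times {}^\sigma V \subseteq \Aa^n \times \Aa^n$, its function field is generated over $K$ by the two blocks of coordinates, that is $K(W) = K(a,b) = K(a)\cdot K(b)$; as $K(a) = L$ and $K(b) = \pi_2^*(K({}^\sigma V)) = \sigma'(K(V)) = \sigma'(L)$, this is exactly $L\,\sigma'(L)$. Thus $(K(V),K(W),\sigma')$ is a $\Zz$-kernel. The ``moreover'' part is then immediate: $(a,\sigma'(a)) = (a,b)$ is by construction the $K$-generic point of $W$, and since $\pi_2$ is dominant its second coordinate $\sigma'(a) = b$ is a $K$-generic point of ${}^\sigma V$.

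The only genuinely delicate point is the bookkeeping around the fact that $\sigma_V$ is \emph{not} a $K$-algebra homomorphism (it restricts to $\sigma$, not the identity, on $K$), so one must be careful to apply Lemma \ref{l1} and the $K$-linearity of $\pi_1^*,\pi_2^*$ to the correct maps; everything else is a routine transfer of dominance and generating-set data from the varieties to their function fields.
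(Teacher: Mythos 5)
Your proposal is correct and follows essentially the same route as the paper: you define $\sigma'$ as the composition $\pi_2^*\circ\sigma_V$ of the comorphism of the second projection with the twist map, take the canonical generic point of $W$ given by the coordinate functions modulo $I(W)$, and invoke Lemma \ref{l1} to identify $\sigma'(a)$ with the second block of coordinates. The only difference is that you explicitly verify the generation condition $L'=L\,\sigma'(L)$, which the paper leaves implicit; that is a harmless (indeed welcome) addition.
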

\begin{proof}
The dominant projection maps $W\to V,W\to {}^{\sigma}V$ induce the following $K$-algebra homomorphisms:
$$\pi^W_V:K(V)\to K(W),\ \ \ \ \pi^W_{{}^{\sigma}V}:K({}^{\sigma}V)\to K(W).$$
Using the map $\pi^W_V$, we identify $K(V)$ with a $K$-subalgebra of $K(W)$, and we define:
$$\sigma':=\pi^W_{{}^{\sigma}V}\circ \sigma_V,$$
where $\sigma_V$ is now considered as a map from $K(V)$ to $K({}^{\sigma}V)$.

For the moreover part, let as assume that $V\subseteq \Aa^n$ and define:
$$a:=\left(X_1+I(W),\ldots,X_n+I(W)\right),\ \ \ a':=\left(X_{n+1}+I(W),\ldots,X_{2n}+I(W)\right).$$
Then $(a,a')\in W(K(W))$ is a $K$-generic point of $W$, hence (since both the projections are dominant) $a\in V(K(W))$ is a $K$-generic point of $V$, and $a'\in V(K(W))$ is a $K$-generic point of ${}^{\sigma}V$. Therefore, we can apply Lemma \ref{l1} and see that after the natural identifications $K[a]\cong K[V],K[a']\cong K[{}^{\sigma}V]$, we get that $a'=\sigma'(a)$.
\end{proof}
The result below gives a natural notion of a prolongation of a $\Zz$-pair. We will mostly use in the sequel the equivalence between the items $(1)$ and $(3)$.
\begin{lemma}\label{mainext}
Suppose that $(V,W)$ and $(W,W')$ are $\Zz$-pairs. Then the following are equivalent.
\begin{enumerate}
\item The $\Zz$-kernel $(K(W),K(W'),\sigma'')$ coming from the $\Zz$-pair $(W,W')$ (as in Lemma \ref{lemma1}) is a prolongation of the $\Zz$-kernel $(K(V),K(W),\sigma')$ coming from the $\Zz$-pair $(V,W)$ (as in Lemma \ref{lemma1}).
\item The following diagram is commutative (notation from the proof of Lemma \ref{lemma1}):
\begin{equation*}
 \xymatrix{  W' \ar[rr]^{\pi^{W'}_{W}} \ar[d]_{\pi^{W'}_{{}^{\sigma}W}} &  & W \ar[d]^{\pi^{W}_{{}^{\sigma}V}} \\
{}^{\sigma}W \ar[rr]^{\pi^{{}^{\sigma}W}_{{}^{\sigma}V}} &  &  ^{\sigma}V.}
\end{equation*}

\item $W'\subseteq W\times_{{}^{\sigma}V} {}^{\sigma}W$.
\end{enumerate}
\end{lemma}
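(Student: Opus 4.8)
The plan is to prove the three equivalences by translating each condition into a statement about coordinate rings via the contravariant equivalence between affine varieties and their coordinate $K$-algebras, and then chasing the diagram supplied by Lemma~\ref{l0}. The central object is the $\Zz$-kernel map $\sigma'' = \pi^{W'}_{{}^{\sigma}W}\circ\sigma_W$ built from the $\Zz$-pair $(W,W')$, versus the map $\sigma' = \pi^{W}_{{}^{\sigma}V}\circ\sigma_V$ built from $(V,W)$; condition $(1)$ asserts exactly that $\sigma''|_{K(V)} = \sigma'$, where $K(V)$ is viewed inside $K(W)$ via $\pi^W_V$, and inside $K(W')$ via the composite $\pi^{W'}_W\circ\pi^W_V$.

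\smallskip

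First I would establish $(1)\Leftrightarrow(2)$. The implication $(2)\Rightarrow(1)$ is a direct diagram chase: dualizing the commutative square of $(2)$ into coordinate rings (a square of $K$-algebra maps, of the form displayed in Lemma~\ref{l0}) and composing with $\sigma_V:K(V)\to K({}^{\sigma}V)$ shows that the two ways of sending a generic coordinate of $V$ into $K(W')$ agree, which is precisely $\sigma''|_{K(V)}=\sigma'$. For the converse I would use the ``moreover'' part of Lemma~\ref{lemma1}: choose a generic point $a\in V(K(W))$ with $(a,\sigma'(a))$ generic on $W$, and a generic $b\in W(K(W'))$ with $(b,\sigma''(b))$ generic on $W'$. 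The equality of the two maps on generators of $K(V)$ forces the generic points produced by the two routes around the square to coincide, and since generic points determine a dominant morphism between irreducible varieties uniquely, the morphisms agree; this recovers $(2)$.

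\smallskip

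Next I would handle $(2)\Leftrightarrow(3)$, which is the more geometric half and where I expect the real content to lie. Condition $(3)$ says $W'$ factors through the fibre product $W\times_{{}^{\sigma}V}{}^{\sigma}W$, where the two structure maps are $\pi^W_{{}^{\sigma}V}:W\to{}^{\sigma}V$ and $\pi^{{}^{\sigma}W}_{{}^{\sigma}V}:{}^{\sigma}W\to{}^{\sigma}V$ (the latter obtained by applying $\sigma$ to the projection $W\to V$). By the universal property of the fibre product, the inclusion $W'\hookrightarrow W\times{}^{\sigma}W$ lands in the fibre product if and only if the two composites $W'\to W\to{}^{\sigma}V$ and $W'\to{}^{\sigma}W\to{}^{\sigma}V$ are equal as morphisms—which is exactly the commutativity asserted in $(2)$. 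The only subtlety is bookkeeping: I must check that the maps $\pi^{W'}_W$, $\pi^{W'}_{{}^{\sigma}W}$ appearing in $(2)$ are genuinely the two coordinate projections restricted to $W'\subseteq W\times{}^{\sigma}W$ (which holds since $(W,W')$ is a $\Zz$-pair, so $W'\subseteq W\times{}^{\sigma}W$ by definition), and that ${}^{\sigma}(\cdot)$ is functorial enough that $\pi^{{}^{\sigma}W}_{{}^{\sigma}V}={}^{\sigma}\!\left(\pi^W_V\right)$.

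\smallskip

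The main obstacle I anticipate is not any single hard step but keeping the identifications straight: three varieties $V,W,W'$ each sit inside products, $\sigma$ acts on all of them through the functor ${}^{\sigma}(\cdot)$, and one must be careful that the dominance hypotheses (needed to identify coordinate rings with subalgebras and to make generic points available) are invoked correctly at each stage. Once the universal property of the fibre product is matched with the commutative square, and the dual square of Lemma~\ref{l0} is matched with the kernel-restriction condition, the three statements collapse into one.
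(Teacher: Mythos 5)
Your proposal is correct and takes essentially the same route as the paper: both unwind condition (1) into the equality $\pi^{W'}_{W}\circ \pi^{W}_{{}^{\sigma}V}\circ \sigma_V=\pi^{W'}_{{}^{\sigma}W} \circ \sigma_{W}\circ \pi^{W}_{V}$ of maps $K(V)\to K(W')$ and reduce it, via the commuting square of Lemma~\ref{l0}, to the commutativity of the square in (2). Your use of generic points for $(1)\Rightarrow(2)$ is just the paper's cancellation of the bijection $\sigma_V$ in disguise, and your fibre-product argument for $(2)\Leftrightarrow(3)$ is what the paper declares immediate.
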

\begin{proof}
Note that the item $(1)$ is equivalent to fact that the following equality between maps from $K(V)$ to $K(W')$ holds:
$$\pi^{W'}_{W}\circ \pi^{W}_{{}^{\sigma}V}\circ \sigma_V=\pi^{W'}_{{}^{\sigma}W} \circ \sigma_{W}\circ \pi^{W}_{V}.$$
For the proof of the equivalence between $(1)$ and $(2)$, it is enough to consider the following (big) diagram of $K$-algebra monomorphisms:
\begin{equation*}
 \xymatrix{  K(V) \ar[rr]^{\sigma_V}\ar[d]^{\pi_V^{W}} & & K({}^{\sigma}V) \ar[d]^{\pi_{{}^{\sigma}V}^{{}^{\sigma}W}} \ar[rr]^{\pi_{{}^{\sigma}V}^{W}} & &  K(W) \ar[d]^{\pi_{W}^{W'}} \\
 K(W) \ar[rr]^{\sigma_{W}} & & K({}^{\sigma}W) \ar[rr]^{\pi_{{}^{\sigma}W}^{W'}} & &  K(W'),}
\end{equation*}
and notice that the left-hand side (small) diagram commutes by Lemma \ref{l0}.
\\
The equivalence between $(2)$ and $(3)$ is immediate.
\end{proof}
We will start now from a $\Zz$-kernel and obtain a $\Zz$-pair. First, we need an easy result about loci which we leave without proof.
\begin{lemma}\label{loci}
Suppose $(L,L',\sigma')$ is a $\Zz$-kernel, $a\in L^n$ and $V=\locus_K(a)$. Then we have:
$${}^{\sigma}V=\locus_K(\sigma'(a)).$$
\end{lemma}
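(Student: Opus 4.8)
The plan is to unwind the definitions and show that both varieties are cut out by the same prime ideal after translating everything through the coordinate-ring isomorphism $\sigma_L : L[\bar X] \to L'[\bar X]$ induced by $\sigma'$ on $L$. Recall that for any tuple $a \in L^n$, the locus $\locus_K(a)$ is the $K$-irreducible variety whose ideal is
\[
I(\locus_K(a)) = \{\, f \in K[X_1,\ldots,X_n] : f(a) = 0 \,\}.
\]
So I must prove the equality of ideals
\[
I({}^{\sigma}V) = \{\, f \in K[\bar X] : f(\sigma'(a)) = 0 \,\}.
\]

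First I would recall that, by the definition of the functor $V \mapsto {}^{\sigma}V$, if $V = \locus_K(a)$ then $I(V) = I(\locus_K(a))$ and $I({}^{\sigma}V) = I(V)^{\sigma}$, i.e.\ $I({}^{\sigma}V)$ is obtained by applying $\sigma$ coefficient-wise to every polynomial in $I(V)$. This is exactly the description recorded in the proof of Lemma~\ref{l1}. Thus the task reduces to showing that for $f \in K[\bar X]$,
\[
f \in I(V)^{\sigma} \quad\Longleftrightarrow\quad f(\sigma'(a)) = 0.
\]
Since $\sigma$ is an automorphism of $K$, writing $f = g^{\sigma}$ for a unique $g \in K[\bar X]$, the left-hand condition becomes $g \in I(V)$, i.e.\ $g(a) = 0$.

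The key step is then a direct computation relating $g(a)$ and $f(\sigma'(a)) = g^{\sigma}(\sigma'(a))$. Here I would use that $\sigma'$ restricts to $\sigma$ on $K$ and is a field homomorphism: applying $\sigma'$ to the element $g(a) \in L$ and using that $\sigma'$ commutes with the ring operations, one gets
\[
\sigma'\bigl(g(a)\bigr) = g^{\sigma}\bigl(\sigma'(a)\bigr) = f\bigl(\sigma'(a)\bigr),
\]
because applying $\sigma'$ to a polynomial expression in $a$ sends each coefficient (which lies in $K$) via $\sigma$ and each coordinate of $a$ to its image under $\sigma'$. Since $\sigma'$ is injective, $g(a) = 0$ if and only if $\sigma'(g(a)) = 0$, which gives the desired equivalence $g(a) = 0 \iff f(\sigma'(a)) = 0$. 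Combining this with the reduction above yields $I({}^{\sigma}V) = I(\locus_K(\sigma'(a)))$, hence ${}^{\sigma}V = \locus_K(\sigma'(a))$.

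I expect the only subtle point to be bookkeeping the left/right convention for how $\sigma$ acts on polynomials versus on the tuple $\sigma'(a)$ — making sure that ``apply $\sigma$ to coefficients'' (the definition of $I(V)^{\sigma}$) matches ``evaluate the $\sigma$-twisted polynomial at the $\sigma'$-image of $a$'', which is precisely the content of the naturality square for $\sigma_V$ and the identity $\sigma_V(a) = b$ of Lemma~\ref{l1}. Once that identification is fixed, the argument is a routine application of injectivity of $\sigma'$, and indeed the authors explicitly state this is an easy result left without proof.
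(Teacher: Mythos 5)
Your proof is correct. The paper explicitly leaves this lemma without proof, so there is nothing to compare against; your argument is the standard one the authors clearly had in mind, and it correctly uses the two ingredients the paper does record elsewhere, namely the description $I({}^{\sigma}V)=I(V)^{\sigma}$ from the proof of Lemma~\ref{l1} and the fact that $\sigma'$ is an injective ring homomorphism restricting to $\sigma$ on $K$, so that $\sigma'(g(a))=g^{\sigma}(\sigma'(a))$ and hence $g(a)=0$ if and only if $g^{\sigma}(\sigma'(a))=0$.
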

We show below a converse of Lemma \ref{lemma1}.
\begin{lemma}\label{lemma2}
Any finitely generated $\Zz$-kernel comes from a $\Zz$-pair in the way described in Lemma \ref{lemma1}.
\end{lemma}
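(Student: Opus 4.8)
The plan is to reverse the construction of Lemma \ref{lemma1}: starting from a finitely generated $\Zz$-kernel $(L,L',\sigma')$, I would manufacture a $\Zz$-pair $(V,W)$ whose associated kernel is (canonically isomorphic to) the given one. First I would use finite generation to choose a finite tuple $a=(a_1,\dots,a_n)$ with $L=K(a)$, and set $V:=\locus_K(a)$, so that the natural $K$-algebra identification gives $K(V)\cong K(a)=L$ with $a$ a $K$-generic point of $V$. Applying $\sigma'$ coordinatewise produces the tuple $\sigma'(a)\in (L')^n$, and Lemma \ref{loci} tells me that $\locus_K(\sigma'(a))={}^{\sigma}V$. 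I would then define $W:=\locus_K(a,\sigma'(a))\subseteq \Aa^{2n}$, the $K$-irreducible variety whose $K$-generic point is the pair $(a,\sigma'(a))$; this is $K$-irreducible precisely because the tuple lives inside the integral domain $L'$.

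Next I would check that $(V,W)$ is a $\Zz$-pair. The inclusion $W\subseteq V\times {}^{\sigma}V$ is immediate, since the first $n$ coordinates of the generic point lie on $V$ and the last $n$ on ${}^{\sigma}V$ by the previous paragraph. The two projections $W\to V$ and $W\to {}^{\sigma}V$ send the generic point $(a,\sigma'(a))$ to $a$ and to $\sigma'(a)$ respectively; as these are $K$-generic in $V$ and in ${}^{\sigma}V$, both projections are dominant, so $(V,W)$ is indeed a $\Zz$-pair.

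The key step, and the only place where the defining condition $L'=L\sigma'(L)$ is genuinely used, is the identification of the coordinate field $K(W)$ with $L'$. Here $K(W)=K(a,\sigma'(a))$, which contains $K(a)=L$ and also $\sigma'(L)=\sigma'(K(a))=K(\sigma'(a))$, hence contains $L\sigma'(L)=L'$; conversely $L'$ contains both $a$ and $\sigma'(a)$, so $L'\supseteq K(a,\sigma'(a))=K(W)$. Thus $K(W)=L'$.

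Finally I would invoke the ``moreover'' part of Lemma \ref{lemma1}: the $\Zz$-kernel $(K(V),K(W),\sigma'')$ produced by $(V,W)$ comes equipped with a $K$-generic $\tilde a\in V(K(V))$ for which $(\tilde a,\sigma''(\tilde a))$ is a $K$-generic point of $W$. Under the identifications above we have $\tilde a\leftrightarrow a$ and the second component $\leftrightarrow\sigma'(a)$, so $\sigma''(a)=\sigma'(a)$; since $\sigma''$ and $\sigma'$ both restrict to $\sigma$ on $K$ and agree on $a$, they coincide on $L=K(a)$. Hence the kernel coming from $(V,W)$ is exactly $(L,L',\sigma')$, as required. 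I do not expect a real obstacle: the only point needing care is confirming that $W$ really has $(a,\sigma'(a))$ as its $K$-generic point, so that $K(W)$ is the full compositum $L\sigma'(L)$ rather than some proper subfield — and this is exactly what the hypothesis $L'=L\sigma'(L)$ guarantees.
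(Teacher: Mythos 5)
Your proposal is correct and follows essentially the same route as the paper: choose a finite tuple $a$ with $L=K(a)$, set $V:=\locus_K(a)$ and $W:=\locus_K(a,\sigma'(a))$, use Lemma \ref{loci} to identify ${}^{\sigma}V$, and check via $L'=L\sigma'(L)$ that $K(W)=L'$ so that Lemma \ref{lemma1} returns the original kernel. The paper's proof is just a terser version of the same argument; your extra verifications (dominance of the projections, agreement of $\sigma''$ with $\sigma'$ on $K(a)$) are exactly the details it leaves implicit.
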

\begin{proof}
Let $(L,L',\sigma')$ be a $\Zz$-kernel. Take a finite tuple $a$ in $L$ such that $L=K(a)$. We define
$$b:=(a,\sigma'(a)),\ \ V:=\locus_K(a),\ \ W:=\locus_K(b).$$
By Lemma \ref{loci}, we have ${}^{\sigma}V=\locus_K(\sigma'(a))$. Since $L'=K(b)$, $(V,W)$ is a $\Zz$-pair which produces (using Lemma \ref{lemma1}) the original $\Zz$-kernel $(L,L',\sigma')$.
\end{proof}
We note below a very general result which will be used for the category of varieties, where epimorphisms coincide with dominant maps.
\begin{fact}\label{genfact}
Let $\mathcal{C}$ be a category with fiber products and $B_1\to A,\ldots,B_n\to A$ be epimorphisms in $\mathcal{C}$. Then for all $i\in \{1,\ldots,n\}$, the corresponding projection morphism
$$B_1\times_AB_2\times_A\ldots\times_AB_n\to B_i$$
is an epimorphism as well.
\end{fact}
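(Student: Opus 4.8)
The plan is to induct on $n$, using associativity of fiber products to strip off one factor at a time and thereby reduce the whole assertion to the single categorical statement that the pullback of an epimorphism along an arbitrary morphism is again an epimorphism. I take as inductive hypothesis, at level $n-1$, that \emph{every} projection of an $(n-1)$-fold fiber product of morphisms into $A$ with epimorphic structure maps is an epimorphism. The base case $n=1$ is trivial, the projection being the identity $B_1\to B_1$. For the inductive step I fix the target index $i$; since $B_1\times_A\cdots\times_A B_n$ is symmetric in its factors up to a canonical isomorphism compatible with the projections, I may relabel and assume $i=1$. Introducing the abbreviations
\[ P:=B_1\times_A B_2\times_A\cdots\times_A B_n,\qquad R:=B_2\times_A\cdots\times_A B_n, \]
associativity of fiber products yields a canonical isomorphism $P\cong B_1\times_A R$ under which $p_1$ becomes the left-hand projection of the pullback square
\[ \xymatrix{ B_1\times_A R \ar[r]\ar[d]_{p_1} & R \ar[d]^{r} \\ B_1 \ar[r]^{f_1} & A } \]
in which $r\colon R\to A$ is the structure morphism of $R$.

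Next I would check that $r$ is itself an epimorphism, so that $p_1$ is exhibited as a base change of an epimorphism. The structure morphism of the fiber product $R$ over $A$ coincides with every composite $R\to B_j\to A$; in particular $r=f_2\circ\mathrm{pr}_2$, where $\mathrm{pr}_2\colon R\to B_2$ is the projection. By the inductive hypothesis $\mathrm{pr}_2$ is an epimorphism, and $f_2$ is an epimorphism by assumption, so their composite $r$ is an epimorphism. The inductive step is therefore complete the moment one knows that the pullback $p_1$ of the epimorphism $r$ along $f_1$ is again an epimorphism.

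That base-change assertion is the decisive point and the step I expect to be the main obstacle. The natural attack is a diagram chase: given $g,h\colon B_1\to Z$ with $g\circ p_1=h\circ p_1$, one seeks to cancel $p_1$ on the right using only the universal property of the pullback and the fact that $r$ is an epimorphism. The difficulty is that $p_1$ and the epimorphism $r$ sit over different corners of the square, so the right-cancellation available for $r$ (equivalently for $f_2$) does not pass to $p_1$ by formal nonsense alone; supplying the missing argument is exactly where the hypotheses are genuinely consumed and where essentially all the content of the Fact resides. By contrast, the inductive bookkeeping above is purely formal, so I would concentrate the entire effort on securing this one base-change step inside $\mathcal{C}$.
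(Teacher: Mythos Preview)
Your inductive reduction to the single assertion ``the pullback of an epimorphism along an arbitrary morphism is again an epimorphism'' is correct and cleanly executed. The difficulty you flag at the end is real, and in fact fatal: that base-change property \emph{fails} in a general category with fiber products, so the Fact as literally stated is false, and no amount of diagram chasing will close the gap you isolated.

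A concrete counterexample lives in the category $\mathbf{CRing}$ of commutative unital rings. Take
\[
A=\Zz[t,t^{-1}],\qquad B_1=\Zz[t],\qquad B_2=\Zz[t^{-1}],
\]
with $f_1,f_2$ the inclusions. Both are epimorphisms in $\mathbf{CRing}$ (localizations are epimorphisms). The fiber product $B_1\times_A B_2=\{(p,q):p=q\text{ in }A\}$ equals $\Zz$, since $\Zz[t]\cap\Zz[t^{-1}]=\Zz$ inside $A$. The projection $\Zz\to\Zz[t]=B_1$ is the inclusion, which is \emph{not} an epimorphism: the two ring maps $\Zz[t]\to\Zz$ sending $t\mapsto 0$ and $t\mapsto 1$ agree on $\Zz$ but differ on $\Zz[t]$. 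Note that here \emph{both} structure maps are epimorphisms, so the failure is not caused by dropping a hypothesis in your reduction.

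The paper gives no proof of this Fact; it simply records it and immediately specializes to the category of $K$-irreducible affine varieties, ``where epimorphisms coincide with dominant maps''. In that intended setting the conclusion does hold, but for geometric reasons rather than by abstract nonsense: if $f_1\colon B_1\to A$ and $f_2\colon B_2\to A$ are both dominant, the generic points of $B_1$ and $B_2$ both lie over the generic point of $A$, and the tensor product of their residue fields over $K(A)$ is nonzero, producing a point of $B_1\times_A B_2$ lying over the generic point of $B_1$. So your instinct that the base-change step is where the content lives is exactly right; the resolution is that this content is supplied by algebraic geometry, not by category theory, and the statement should be read with the paper's intended category in mind.
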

We show now a (preparatory) constructive version of Proposition \ref{noncons}.
\begin{lemma}[$\Zz$-prolongation lemma]\label{zprol}
Let $(V,W)$ be a $\Zz$-pair and define
$$W':=W\times_{{}^{\sigma}V} {}^{\sigma}W.$$
Then $(W,W')$ is a $\Zz$-pair and the corresponding $\Zz$-kernel $(K(W),K(W'),\sigma'')$ is a prolongation of the corresponding $\Zz$-kernel $(K(V),K(W),\sigma')$.
\end{lemma}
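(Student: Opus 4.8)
The plan is to prove the two assertions in order, and to obtain the prolongation statement almost formally from Lemma \ref{mainext} once $(W,W')$ has been shown to be a $\Zz$-pair. Indeed, both $(V,W)$ and $(W,W')$ will be $\Zz$-pairs, and by the very definition of $W'$ we have $W'\subseteq W\times_{{}^\sigma V}{}^\sigma W$; thus condition $(3)$ of Lemma \ref{mainext} holds, and the implication $(3)\Rightarrow(1)$ gives at once that the $\Zz$-kernel $(K(W),K(W'),\sigma'')$ attached to $(W,W')$ prolongs the $\Zz$-kernel $(K(V),K(W),\sigma')$ attached to $(V,W)$. Concretely, $\sigma''=\pi^{W'}_{{}^\sigma W}\circ\sigma_W$ restricts to $\sigma'$ on $K(V)$, which is precisely the commutativity built into the fiber-product square, fed by the naturality diagram of Lemma \ref{l0}. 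So the whole burden falls on the first assertion.

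To see that $(W,W')$ is a $\Zz$-pair, first note that $W'=W\times_{{}^\sigma V}{}^\sigma W$ is by construction a closed subvariety of $W\times{}^\sigma W$. For the dominance of the two projections $W'\to W$ and $W'\to{}^\sigma W$, observe that the two morphisms defining the fiber product are dominant: $\pi^W_{{}^\sigma V}\colon W\to{}^\sigma V$ is dominant because $(V,W)$ is a $\Zz$-pair, and $\pi^{{}^\sigma W}_{{}^\sigma V}={}^\sigma(\pi^W_V)\colon{}^\sigma W\to{}^\sigma V$ is dominant because it is the image, under the endofunctor $U\mapsto{}^\sigma U$, of the dominant projection $\pi^W_V$, and this functor is an isomorphism of categories and hence preserves dominance. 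Since in the category of varieties the dominant morphisms are exactly the epimorphisms, Fact \ref{genfact} (with $A={}^\sigma V$, $B_1=W$, $B_2={}^\sigma W$) shows that the projections out of $W'$ are dominant.

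The one point that genuinely needs care---and the main obstacle---is that $W'$ must be a \emph{variety}, i.e.\ $K$-irreducible, whereas the scheme-theoretic fiber product of two $K$-irreducible varieties over a common base is in general reducible (already $\spec(L\otimes_K L)$ splits for a nontrivial finite separable extension $L/K$). I would resolve this by reading $W\times_{{}^\sigma V}{}^\sigma W$ as the $K$-locus of a generically chosen point and checking that this locus still dominates both factors. Using Lemma \ref{lemma1}, choose $a$ generic in $V$ with $a':=\sigma'(a)$ generic in ${}^\sigma V$ and $(a,a')$ generic in $W$, so that $K(W)=K(a,a')$. Since $\pi^{{}^\sigma W}_{{}^\sigma V}$ is dominant and $a'$ is generic in ${}^\sigma V$, I can choose $a''$ with $(a',a'')$ generic in ${}^\sigma W$ and with $a''$ free from $a$ over $K(a')$, which pins down the irreducible component of the fiber product dominating both factors; I then set $W':=\locus_K\big((a,a'),(a',a'')\big)$, which is $K$-irreducible by construction. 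Genericity of $(a,a')$ in $W$ and of $(a',a'')$ in ${}^\sigma W$ re-proves dominance of the two projections, so $(W,W')$ is a $\Zz$-pair; combined with the first paragraph this completes the proof. All the delicate bookkeeping is in this choice of a dominating irreducible component; once it is fixed, the remaining verifications are formal and reduce to Lemmas \ref{l0}, \ref{lemma1} and \ref{mainext}.
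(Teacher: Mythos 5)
Your proof is correct, and for the two formal parts (dominance via Fact \ref{genfact}, prolongation via the implication $(3)\Rightarrow(1)$ of Lemma \ref{mainext}) it follows exactly the same route as the paper, whose entire proof consists of those two observations. The substantive difference is your third paragraph: the paper's proof is silent on the $K$-irreducibility of $W'$, even though a $\Zz$-pair is by definition a pair of \emph{varieties} and the fiber product of two varieties over a common base with dominant structure maps can genuinely be reducible. This is not a phantom worry: take $\sigma=\id$, $V=\Aa^1$ and $W=\{(x,y):x^2+y^2=1\}$; then $W\times_{{}^{\sigma}V}{}^{\sigma}W=\{(x,y,z):x^2+y^2=1,\ y^2+z^2=1\}$ splits into the two components $x=\pm z$. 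Your repair --- replacing the scheme-theoretic fiber product by $\locus_K\bigl((a,a'),(a',a'')\bigr)$ for a generic $(a,a')$ in $W$ and a compatible generic $(a',a'')$ in ${}^{\sigma}W$ --- is the standard fix (it is essentially how the argument is run in the original ACFA literature), and it is what is actually needed downstream in Proposition \ref{acfacase}, where $K(W')$ must be a field. Two small remarks: first, ``the'' irreducible component dominating both factors need not be unique (in the example above both components dominate both factors), but any choice works, both for the dominance statements and for feeding Lemma \ref{mainext}, since the containment $W'\subseteq W\times_{{}^{\sigma}V}{}^{\sigma}W$ is all that condition $(3)$ requires; second, your extra requirement that $a''$ be ``free from $a$ over $K(a')$'' is harmless but not needed --- any amalgamation of $K(a,a')$ and $K({}^{\sigma}W)$ over $K(a')$ inside a common field produces a valid $W'$. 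So your write-up is, if anything, more careful than the paper's.
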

\begin{proof}
We need to check first that $(W,W')$ is a $\Zz$-pair. Since the morphism $W\to V$ is dominant, the morphism ${}^{\sigma}W\to {}^{\sigma}V$ is dominant as well, so $(W,W')$ is a $\Zz$-pair by Fact \ref{genfact}.

The prolongation statement concerning the corresponding $\Zz$-kernels follows from Lemma \ref{mainext}.
\end{proof}
\begin{remark}\label{acfa2}
For subsequent generalizations to the case of an arbitrary finitely generated marked group $(G,\rho)$, let us introduce the following notation:
$$\rho=(1,\sigma),\ \ \
\rho\rho=\left[\begin{array}{cc}
1          & \sigma         \\
\sigma     & \sigma^2
\end{array}\right].$$
We prefer to represent the sequence $(1,\sigma,\sigma,\sigma^2)$ in the matrix form, since this form makes the internal symmetries of the sequence $\rho\rho$ easier to visualize, which will become important when we will consider several automorphisms.

Let $(V,W)$ be a $\Zz$-pair, in particular $W\subseteq V\times {}^{\sigma}V$. We define:
$${}^{\rho}V:=V\times {}^{\sigma}V,\ \ \ {}^{\rho\rho}V:=V\times {}^{\sigma}V\times {}^{\sigma}V\times {}^{\sigma^2}V.$$
We also denote:
$${}^{\rho\cdot \rho}V:={}^{\rho\rho}V\cap \Delta^3_2,$$
i.e. we identify in ${}^{\rho\cdot \rho}V$ the second coordinate with the third coordinate (the only pair of coordinates in ${}^{\rho\rho}V$ which can be identified). Note that then we have the following:
$${}^{\rho}W\cap {}^{\rho\cdot \rho}V=(W\times {}^{\sigma}W)\cap \Delta^3_2=W\times_{{}^{\sigma}V}{}^{\sigma}W.$$
This observation will allow us to extend the $\Zz$-prolongation process from Lemma \ref{zprol} to the case of other groups (in place of $\Zz$). To summarize, for a $\Zz$-pair $(V,W)$, we define
$$W':={}^{\rho}W\cap {}^{\rho\cdot \rho}V,$$
and Lemma \ref{zprol} tells us that $(W,W')$ is a $\Zz$-pair again.
\end{remark}
The actual constructive version of Proposition \ref{noncons} is stated below.
\begin{prop}\label{acfacase}
Suppose that $(V,W)$ is a \emph{$\Zz$-pair}. Then the $\Zz$-kernel coming from $(V,W)$ has a prolongation which is a difference field extension.
\end{prop}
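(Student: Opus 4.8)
The plan is to iterate the $\Zz$-prolongation lemma (Lemma \ref{zprol}) infinitely many times and take a direct limit. Starting from the given $\Zz$-pair $(V,W)$, I would set $(V_0,W_0):=(V,W)$ and then inductively define
\begin{equation*}
(V_{i+1},W_{i+1}):=\left(W_i,\ {}^{\rho}W_i\cap {}^{\rho\cdot\rho}V_i\right),
\end{equation*}
using the formula $W_{i+1}=W_i\times_{{}^{\sigma}V_i}{}^{\sigma}W_i$ from Remark \ref{acfa2}. By Lemma \ref{zprol}, each $(V_i,W_i)$ is a $\Zz$-pair, and the $\Zz$-kernel $(K(V_{i+1}),K(W_{i+1}),\sigma_{i+1})=(K(W_i),K(W_{i+1}),\sigma_{i+1})$ produced from it (via Lemma \ref{lemma1}) is a prolongation of the $\Zz$-kernel $(K(V_i),K(W_i),\sigma_i)$ produced from $(V_i,W_i)$. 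The zeroth kernel in this tower is exactly the $\Zz$-kernel coming from the original pair $(V,W)$.

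The prolongation relation $\sigma_{i+1}|_{K(V_i)}=\sigma_i$ means precisely that along the chain of field inclusions
\begin{equation*}
K(V_0)\subseteq K(W_0)=K(V_1)\subseteq K(W_1)=K(V_2)\subseteq \cdots
\end{equation*}
the partial maps $\sigma_i$ are compatible: each one restricts to the previous one on the relevant subfield. Concretely, writing $L:=\bigcup_i K(V_i)=\bigcup_i K(W_i)$ for the union of this increasing chain of fields, the compatibility lets me define a single map $\sigma^\ast:L\to L$ by $\sigma^\ast|_{K(V_i)}:=\sigma_i$. I would check that $\sigma^\ast$ is well-defined (consistency on overlaps is exactly the prolongation condition), that it is a field homomorphism (each $\sigma_i$ is), and that it restricts to $\sigma$ on $K$. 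The key remaining point is that $\sigma^\ast$ is \emph{onto} $L$, hence an automorphism, making $(L,\sigma^\ast)$ a difference field. Surjectivity holds because the defining relation $L'=L\sigma'(L)$ of a $\Zz$-kernel guarantees $K(W_i)=K(V_i)\,\sigma_i(K(V_i))$, so every element of $K(V_{i+1})=K(W_i)$ lies in the field generated by $K(V_i)$ and $\sigma_i(K(V_i))=\sigma^\ast(K(V_i))\subseteq\sigma^\ast(L)$; letting $i$ range over all indices shows $L=L\,\sigma^\ast(L)$, and a transcendence-degree count (each $\sigma_i$ is an isomorphism onto its image) upgrades this to surjectivity.

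Finally I would verify that the resulting difference field $(L,\sigma^\ast)$ really is a prolongation of the original $\Zz$-kernel in the sense of Definition \ref{kerdef}(3): since $\sigma^\ast|_{K(V_0)}=\sigma_0=\sigma'$, it extends the map of the $\Zz$-kernel $(K(V),K(W),\sigma')$, and the difference field $(L,L,\sigma^\ast)$ viewed as a $\Zz$-kernel (as in the Remark after Definition \ref{kerdef}) is a prolongation of $(K(V),K(W),\sigma')$. I expect the main obstacle to be the surjectivity/automorphism argument: each $\sigma_i$ is only a partial embedding $K(V_i)\to K(W_i)$, so one must carefully track how the images fit together across the direct limit and confirm that the union map hits every element of $L$. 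Once surjectivity is secured, the rest is the standard bookkeeping of a direct limit of compatible field homomorphisms, and the statement follows.
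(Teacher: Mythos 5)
Your overall strategy is exactly the paper's: iterate the $\Zz$-prolongation lemma (Lemma \ref{zprol}) to obtain a chain $K(V)\subseteq K(V_1)\subseteq K(V_2)\subseteq\cdots$ with compatible homomorphisms $\sigma_m:K(V_m)\to K(V_{m+1})$, then take $L:=\bigcup_m K(V_m)$ and $\sigma^{\ast}:=\bigcup_m\sigma_m$. Up to that point your write-up is correct and coincides with the paper's proof, which consists of precisely this union construction.

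The genuine problem is the surjectivity argument, which is false as stated. The identity $L=L\,\sigma^{\ast}(L)$ does \emph{not} imply that $\sigma^{\ast}$ is onto: it holds trivially whenever $\sigma^{\ast}(L)\subseteq L$. Concretely, take $\sigma=\id_K$, $V=\Aa^1$ and $W=\Aa^2=V\times{}^{\sigma}V$; the construction then yields $L=K(x_0,x_1,x_2,\ldots)$ with $\sigma^{\ast}$ the shift $x_i\mapsto x_{i+1}$, so $\sigma^{\ast}(L)=K(x_1,x_2,\ldots)$ misses $x_0$ even though $L\,\sigma^{\ast}(L)=L$. A ``transcendence-degree count'' cannot rescue this, since all the relevant transcendence degrees are infinite. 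Fortunately, surjectivity is not what is needed: the paper stops at the union and records $(L,\sigma^{\ast})$, an injective endomorphism extending $\sigma$ and restricting to $\sigma'$ on $K(V)$, as the desired prolongation. If one insists on an honest $\Zz$-field (i.e.\ $\sigma^{\ast}$ an automorphism, as in the definition of a $G$-field), the correct repair is not to prove $\sigma^{\ast}$ onto but to pass to the inversive closure, the direct limit of $L\to L\to L\to\cdots$ along $\sigma^{\ast}$, on which $\sigma^{\ast}$ becomes an automorphism while still extending $\sigma'$; with that substitution the rest of your argument goes through.
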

\begin{proof}
By Lemma \ref{zprol}, there is a sequence of $K$-varieties
$$V_0=V,\ V_1=W,\ V_2=W',\ V_3,\ \ldots $$
such that
$$K(V)\subseteq K(V_1)\subseteq K(V_2)\subseteq K(V_3)\subseteq \ldots$$
and there are ring homomorphisms
$$\sigma_m:K(V_m)\to K(V_{m+1})$$
such that we have
$$\sigma_0=\pi_{{}^{\sigma}V}^W\circ \sigma_V,\ \  \sigma_m\subseteq \sigma_{m+1}.$$
If we take $L:=\bigcup_m K(V_m)$ and $\sigma':=\bigcup_m\sigma_m$, then $(L,\sigma')$ is a difference field extension of $(K,\sigma)$, which is also a prolongation of the $\Zz$-kernel coming from the $\Zz$-pair $(V,W)$.
\end{proof}

\subsection{Transformal kernels and axioms of ACFA}\label{secafca}
We phrase now the well-known geometric axioms of the theory ACFA in terms of $\Zz$-pairs. This reformulation is obvious, however, it has the flexibility needed for our intended generalizations to the case of actions of more general groups.
\smallskip
\\
\textbf{Axioms for ACFA}
\\
The structure $(K,\sigma)$ is a difference field such that for each $\Zz$-pair $(V,W)$, there is $x\in V(K)$ such that $(x,\sigma(x))\in W(K)$.
\begin{remark}\label{acfno}
Usually these axioms include an extra assumption that $K$ is algebraically closed. This extra assumption is not necessary to state axioms for ACFA, and the existentially closed models for actions of most of the groups are \emph{not} algebraically closed. More precisely, using Theorem \ref{sjres}(2), one can conclude that an existentially closed $G$-field is algebraically closed if and only if the profinite completion of $G$ is a projective profinite group. We show below that the above axioms are first-order, even in the case where the ground field is not algebraically closed.
\begin{enumerate}
\item Using \cite[Lemma 3.1]{HK3} (the proof generalizes to the case of an arbitrary group $G$ in a straightforward way), we may assume that the basic field is perfect. Hence our varieties are also \emph{geometrically reduced}, by e.g.
    \cite[\href{http://stacks.math.columbia.edu/tag/030V}{Tag 030V}]{stacks-project}.

\item The notion of $K$-irreducibility (over an \emph{arbitrary} field $K$) is first-order definable using the general bounds from \cite{vddsch}. It is explained in detail e.g. in \cite[Remark 2.7]{HK3}.

\item We need to be more careful when proving that the notion of a dominant morphism $W\to V$ is first-order definable. We will do it in several steps below.
\begin{enumerate}
\item As usual, the right definition of ``dominant'' is the schematic one, i.e. the corresponding map $K[V]\to K[W]$ should be one-to-one (which is exactly what we need to extend ring homomorphisms to fields).

\item Since tensoring over $K$ is an exact functor, the map $K[V]\to K[W]$ is one-to-one if and only if the map
$$K[V]\otimes_KK^{\alg}\to K[W]\otimes_KK^{\alg}$$
is one-to-one.

\item For any algebraic variety $T$ over $K$, we denote
$$T':=T\times_{\spec(K)}\spec(K^{\alg})$$
(change of basis from $K$ to $K^{\alg}$). Then we have:
$$K^{\alg}[T']\cong_{K^{\alg}}K[T]\otimes_KK^{\alg}.$$
Hence we need to express in a definable fashion that the morphism $W'\to V'$ is dominant.

\item Since $V$ is $K$-irreducible, the absolute Galois group of $K$ acts transitively on the set of irreducible components of $V'$, hence $V'$ is equi-dimensional. Similarly, $W'$ is equi-dimensional.

\item Since $V'$ and $W'$ are equi-dimensional, any morphisms $f:W'\to V'$ is dominant if and only if
$$\dim\left(V'\setminus f(W')\right)<\dim(V').$$
The last condition is quantifier-free definable over $K$.

\item Since we get a definable over $K$ condition which is quantifier-free, the dominance condition holds in $K^{\alg}$ if and only if it holds in $K$, which finishes the argument.
\end{enumerate}
\end{enumerate}
\end{remark}
We briefly recall, and phrase in our terminology, a proof of the following result (see \cite{acfa1}) saying that the axioms above do axiomatize the class of existentially closed difference fields. In next sections, we will use a similar, but technically more complicated, procedure to axiomatize theories of difference fields for actions of other groups.
\begin{theorem}\label{thmacfa}
A difference field $(K,\sigma)$ is existentially closed if and only if, it is a model of the theory $\acfa$.
\end{theorem}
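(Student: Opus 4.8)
The plan is to prove both implications of the equivalence, using the constructive machinery of $\Zz$-pairs developed above.

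For the easier direction, I would suppose $(K,\sigma)$ is existentially closed and verify the axioms for $\acfa$. Given a $\Zz$-pair $(V,W)$, Lemma \ref{lemma1} produces a $\Zz$-kernel $(K(V),K(W),\sigma')$ with a $K$-generic point $a\in V(K(W))$ such that $(a,\sigma'(a))$ is a $K$-generic point of $W$. By Proposition \ref{acfacase}, this $\Zz$-kernel prolongs to a genuine difference field extension $(L,\sigma')\supseteq(K,\sigma)$. Inside $(L,\sigma')$ the element $a$ witnesses a solution: we have $a\in V(L)$ and $(a,\sigma'(a))\in W(L)$, since $\sigma'$ now genuinely extends $\sigma$. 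This is an existential (in fact quantifier-free over the data defining $V$ and $W$) statement about the difference field $(K,\sigma)$ that holds in the extension $(L,\sigma')$; by existential closedness it already holds in $(K,\sigma)$, giving the required $x\in V(K)$ with $(x,\sigma(x))\in W(K)$.

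For the converse, I would suppose $(K,\sigma)\models\acfa$ and show it is existentially closed. Let $(K,\sigma)\subseteq(M,\sigma_M)$ be a difference field extension and let $\varphi(\bar x)$ be a quantifier-free formula with parameters in $K$ that has a solution $\bar c$ in $M$; the goal is to find a solution in $K$. The standard reduction is to pass to the difference subfield of $M$ finitely generated over $K$ by $\bar c$, and to encode its geometry as a $\Zz$-pair: setting $a:=\bar c$ (after absorbing the relevant $\sigma_M$-iterates into the tuple so that the field generated is closed enough under $\sigma_M$ for the formula to be expressible), put $V:=\locus_K(a)$ and $W:=\locus_K(a,\sigma_M(a))$. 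By Lemma \ref{loci} we have ${}^{\sigma}V=\locus_K(\sigma_M(a))$, so $W\subseteq V\times{}^{\sigma}V$ and both projections are dominant because $a$ and $(a,\sigma_M(a))$ are generic over $K$ in their respective loci; thus $(V,W)$ is a $\Zz$-pair. The axioms for $\acfa$ then furnish $x\in V(K)$ with $(x,\sigma(x))\in W(K)$, and one checks that such an $x$ satisfies $\varphi$ in $(K,\sigma)$, since lying on $V$ and $W$ records exactly the quantifier-free diagram of $\bar c$ over $K$.

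The main obstacle is the bookkeeping in the converse direction: one must argue that a single $\Zz$-pair genuinely captures an \emph{arbitrary} quantifier-free existential condition involving $\sigma$. The issue is that $\varphi$ may refer to several iterates $\sigma_M(\bar c),\sigma_M^2(\bar c),\dots$, whereas a $\Zz$-pair only records one application of $\sigma$. The resolution is to enlarge the solution tuple to $a=(\bar c,\sigma_M(\bar c),\dots,\sigma_M^{k}(\bar c))$ so that $\sigma_M(a)$ stays within the coordinates of $a$ up to one extra step, reducing any bounded-order difference-polynomial condition to the single-step form handled by $W\subseteq V\times{}^{\sigma}V$; the required compatibility between the internal shift on $a$ and the map $\sigma_V$ is exactly the commutativity supplied by Lemma \ref{l0} and tracked through Lemma \ref{lemma1}. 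One should also note that existential closedness needs only be tested against quantifier-free formulas and that $K$ may be taken large (e.g. the definability observations of Remark \ref{acfno} ensure the axioms are first-order), so no separate argument for genericity of the witness is needed beyond the locus construction.
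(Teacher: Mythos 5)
Your proposal follows the paper's proof in both directions: the forward implication via Lemma \ref{lemma1} and Proposition \ref{acfacase}, and the converse via the locus construction and Lemma \ref{loci}/Lemma \ref{lemma2}. The bookkeeping issue you single out (formulas referring to higher iterates $\sigma_M^2(\bar c),\dots$) is handled by the same standard tuple-enlargement that the paper invokes with ``as usual, we can assume,'' so there is no divergence there.

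There is, however, one genuine gap in your converse direction: the final claim that ``lying on $V$ and $W$ records exactly the quantifier-free diagram of $\bar c$ over $K$.'' This is false for a non-generic point. The axioms of $\acfa$ only supply \emph{some} $x\in V(K)$ with $(x,\sigma(x))\in W(K)$; such an $x$ satisfies every polynomial equation in $I(W)$ (the positive part of the diagram), but nothing forces it to satisfy an inequation $H(x,\sigma(x))\neq 0$ appearing in $\varphi$ --- the witness may well land on the proper subvariety $H=0$. The paper closes this by the Rabinowitsch trick before forming the loci: replace $x$ by $(x,y)$ and the condition $H\neq 0$ by the equation $HY-1=0$, so that $\varphi$ becomes a pure conjunction of equations and membership in $W(K)$ really does suffice. (One also first reduces a general quantifier-free formula to a single conjunction of equations and one inequation, which is routine.) With that modification your argument is the paper's argument.
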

\begin{proof}
$(\Rightarrow)$ Since $(K,\sigma)$ is existentially closed, it is enough to find a difference field extension $(K,\sigma)\subseteq (L,\sigma')$ and $a\in V(L)$ such that $(a,\sigma'(a))\in W(L)$. By Lemma \ref{lemma1}, there is a $\Zz$-kernel corresponding to the $\Zz$-pair $(V,W)$. By Proposition \ref{acfacase}, the $\Zz$-kernel corresponding to the $\Zz$-pair $(V,W)$ has a prolongation $(L,\sigma')$ which is a difference field extension, and there is $a\in V(L)$ such that $(a,\sigma'(a))\in W(L)$.
\\
$(\Leftarrow)$ Suppose now that $(K,\sigma)\models \mathrm{ACFA}$. Let $\varphi(x)$ be a quantifier-free formula over $K$ in the language of difference fields. As usual, we can assume that:
$$\varphi(x):\ \ \ F_1(x,\sigma(x))=0\wedge \ldots\wedge F_m(x,\sigma(x))=0\ \wedge\ H(x,\sigma(x))\neq 0,$$
where $F_1,\ldots,F_m,H\in K[X,X']$ for some $m\in \Nn$ (the length of $X$ and $X'$ is the same as the length of the variable $x$). After replacing $x$ with $(x,y)$ and $H$ with $HY-1$, we can also assume that $\varphi(x)$ is of the form $\bigwedge F_i(x,\sigma(x))=0$.

Assume that there is a difference field extension $(K,\sigma)\subseteq (L,\sigma')$ such that
$$(L,\sigma')\models \exists x\ \varphi(x).$$
Let $a$ be a tuple in $L$ satisfying $\varphi(x)$. Then
$$\left(K(a),K(a,\sigma'(a)),\sigma'|_{K(a)}\right)$$
is a finitely generated $\Zz$-kernel. By Lemma \ref{lemma2}, there is a $\Zz$-tuple $(V,W)$ corresponding to this $\Zz$-kernel. Since $(K,\sigma)\models \mathrm{ACFA}$, there is $x\in V(K)$ such that $(x,\sigma(x))\in W(K)$ which exactly means that $(K,\sigma)\models \exists x\ \varphi(x)$.
\end{proof}
\begin{remark}\label{algproof}
It is clear that the main ingredients in the above proof were Proposition \ref{acfacase} and  Lemma \ref{lemma2}. It is also clear that the crucial Proposition \ref{acfacase} solely depends on the $\Zz$-prolongation Lemma, which is Lemma \ref{zprol}.

In Section \ref{wordsec}, we will describe the right conditions which are necessary to carry on the prolongation process in a general case (i.e. for a group which is not necessarily free), and we will show that a generalization of Theorem \ref{thmacfa} holds, if these conditions are satisfied (Theorem \ref{algproofgen}).
\end{remark}
We quickly see below that all the arguments of this section immediately generalize to the case of several automorphisms to give the (also well-known) theory ACFA$_n$. We fix now a difference field $(K,\sigma_1,\ldots,\sigma_n)$, which we sometimes call an \emph{$F_n$-field}.
\begin{enumerate}
\item Definition \ref{kerdef} has an obvious generalization here to give a notion of an \emph{$F_n$-kernel} (we demand now that $L'=L\sigma_1'(L)\ldots \sigma_n'(L)$) and the corresponding prolongation.

\item We call a pair of varieties $(V,W)$ an \emph{$F_n$-pair}, if $W\subseteq V\times {}^{\sigma_1}V\times \ldots \times {}^{\sigma_n}V$ and all the projections
$$W\to V,\ \ W\to {}^{\sigma_1}V,\ \ldots\ ,\ \ W\to {}^{\sigma_n}V$$
are dominant.

\item Lemma \ref{lemma1} has an obvious generalization to this case.

\item It is also easy to generalize the Lemma \ref{mainext} to the case of several automorphisms. Using the (multi-)diagonal notation from Remark \ref{acfa2}, the right generalization of condition $(3)$ from Lemma \ref{mainext} is
$$W'\subseteq \Delta^{n+2}_2\cap \Delta^{2n+3}_3\cap \ldots \cap \Delta^{n^2}_{n}\cap \Delta^{n^2+n+1}_{n+1},$$
or, following the notation from Remark \ref{acfa2}, in a more compact form as (for $\rho:=(1,\sigma_1,\ldots,\sigma_n)$):
$$W'\subseteq {}^{\rho}W\cap {}^{\rho\cdot \rho}V.$$

\item Using Fact \ref{genfact}, it is easy now to generalize the $\Zz$-prolongation Lemma (Lemma \ref{zprol}) to the case of the \emph{$F_n$-prolongation Lemma}, where we define $W'$ as
$$W':={}^{\rho}W\cap {}^{\rho\cdot \rho}V.$$

\item Lemma \ref{lemma2} and Proposition \ref{acfacase} also generalize to the case of several automorphisms in an obvious way.

\end{enumerate}
Using all the observations above, we can conclude as in the ACFA-case.
\smallskip
\\
\textbf{Axioms for ACFA$_n$}
\\
The structure $(K,\sigma_1,\ldots,\sigma_n)$ is a difference field such that for each $F_n$-pair $(V,W)$, there is $x\in V(K)$ such that $(x,\sigma_1(x),\ldots,\sigma_n(x))\in W(K)$.
\begin{theorem}
A difference field $(K,\sigma_1,\ldots,\sigma_n)$ is existentially closed if and only if, it is a model of the theory $\mathrm{ACFA}_n$.
\end{theorem}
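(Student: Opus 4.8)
The plan is to mimic the proof of Theorem~\ref{thmacfa} exactly, replacing the single automorphism $\sigma$ by the tuple $(\sigma_1,\ldots,\sigma_n)$ and using the generalizations of the auxiliary lemmas listed in items $(1)$--$(6)$ above. Since the two directions are formally identical to the $\acfa$ case, I would organize the argument around the same two implications, invoking the $F_n$-versions of Lemma~\ref{lemma1}, Lemma~\ref{lemma2}, and Proposition~\ref{acfacase} (the constructive prolongation result) at the corresponding points.

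For the direction $(\Rightarrow)$, suppose $(K,\sigma_1,\ldots,\sigma_n)$ is existentially closed. Given an $F_n$-pair $(V,W)$, I would first apply the $F_n$-version of Lemma~\ref{lemma1} to obtain an $F_n$-kernel attached to $(V,W)$, and then the $F_n$-version of Proposition~\ref{acfacase} to prolong it to an actual difference field extension $(L,\sigma_1',\ldots,\sigma_n')$ of $(K,\sigma_1,\ldots,\sigma_n)$, inside which there is a point $a\in V(L)$ with $(a,\sigma_1'(a),\ldots,\sigma_n'(a))\in W(L)$. Existential closedness then produces such a point already in $K$, so the $F_n$-axiom holds. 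The only thing to check carefully here is that the prolongation process indeed converges: the constructive step defines $W'$ as ${}^{\rho}W\cap {}^{\rho\cdot\rho}V$, and item $(5)$ guarantees via Fact~\ref{genfact} that $(W,W')$ is again an $F_n$-pair, so the induction in Proposition~\ref{acfacase} goes through unchanged.

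For the direction $(\Leftarrow)$, suppose $(K,\sigma_1,\ldots,\sigma_n)\models \acfa_n$. I would reduce an arbitrary quantifier-free formula $\varphi(x)$ in the language of difference fields with $n$ operators to the form $\bigwedge_i F_i(x,\sigma_1(x),\ldots,\sigma_n(x))=0$ by the same trick of replacing $x$ with $(x,y)$ and absorbing the inequation $H\neq 0$ into $HY-1=0$. Assuming $\varphi$ is satisfied in some extension $(L,\sigma_1',\ldots,\sigma_n')$ by a tuple $a$, the field $K(a)$ together with $K(a,\sigma_1'(a),\ldots,\sigma_n'(a))$ and the restricted maps forms a finitely generated $F_n$-kernel; by the $F_n$-version of Lemma~\ref{lemma2} this comes from an $F_n$-pair $(V,W)$. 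The $\acfa_n$-axioms then supply $x\in V(K)$ with $(x,\sigma_1(x),\ldots,\sigma_n(x))\in W(K)$, which is exactly a witness for $\exists x\,\varphi(x)$ in $K$.

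Given that all the machinery has already been declared to generalize ``in an obvious way'' in items $(1)$--$(6)$, I do not expect a genuine obstacle in the proof itself; the substantive content lives in those prior generalizations rather than in the theorem. If anything were to require care, it would be confirming that the compact diagonal condition $W'\subseteq {}^{\rho}W\cap {}^{\rho\cdot\rho}V$ correctly encodes the compatibility among all $n$ maps simultaneously (i.e.\ that identifying the right groups of coordinates really enforces $\sigma_i''|_{K(W)}=\sigma_i'$ for every $i$), and that the several transcendence bases can be chosen coherently so that the $n$ partial endomorphisms extend to honest commuting-with-nothing automorphisms of the union field $L=\bigcup_m K(V_m)$; but since the $\sigma_i$ are independent (the acting group is free) no commutation relations need to be respected, so this is routine.
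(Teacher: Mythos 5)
Your proposal is correct and follows essentially the same route as the paper: the paper itself gives no separate proof for $\mathrm{ACFA}_n$, stating only that ``using all the observations above, we can conclude as in the ACFA-case,'' i.e.\ the proof of Theorem~\ref{thmacfa} is repeated verbatim with the $F_n$-versions of Lemma~\ref{lemma1}, Lemma~\ref{lemma2}, the prolongation Lemma~\ref{zprol} and Proposition~\ref{acfacase} substituted in. Your closing remarks about the diagonal condition and the absence of relations in $F_n$ match the paper's items $(4)$--$(5)$, so nothing is missing.
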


\subsection{Word Problem Diagonals}\label{wordsec}
If we want to axiomatize the theory $\gtcf$ in the case when the group $G$ is not free, we need to find a way to encode in a first-order way the \emph{Word Problem} for a \emph{marked group} $(G,\rho)$, i.e. a group with a chosen sequence of generators. In this section, we expand Lemma \ref{mainext} to the case of relations between words (like commutativity), which may be satisfied by partial automorphisms.

Assume that $(G,\rho)$ is a marked group. Our sequence of generators is finite $\rho=(\rho_1,\ldots,\rho_m)$ and we always assume that $\rho_1=1$. A \emph{$G$-field} is a field $K$ together with an action of $G$ by automorphisms. We denote the corresponding field automorphisms of $K$ by the same symbols $\rho_1,\ldots,\rho_m$ and we consider a $G$-field as a first order structure in the following way: $(K;+,\cdot,\rho_1,\ldots,\rho_m)$.

We also assume that the marked group $(G,\rho)$ is finitely presented in a rather simple way, that is we make the following.
\begin{assumption}\label{assume}
We assume that for
$$P:=\left\{(i,j,k,l)\in \{1,\ldots,m\}^4\ |\ \rho_i\rho_j=\rho_k\rho_l\text{ (in $G$)}\right\},$$
$G$ has a presentation of the following form:
$$G=\left\langle\rho\ |\ \rho_i\rho_j=\rho_k\rho_l\text{ for }(i,j,k,l)\in P\right\rangle.$$
Thus, the set $P$ encodes the Word Problem for $G$.
\end{assumption}
We can define now the notion of a $G$-kernel, which can be understood as a ``two-step version'' of the notion of a $\Zz$-kernel.
\begin{definition}\label{kerdefg}
\begin{enumerate}
\item A \emph{$G$-kernel} (with respect to the $G$-field $(K,\rho)$) is a tower of fields $K\subseteq L\subseteq L'\subseteq L''$ together with field homomorphisms
    $$\rho_i':L\to L',\ \ \ \rho_i'':L'\to L''$$
    for $i=1,\ldots,m$ such that:
\begin{itemize}
\item each $\rho_i'$ extends $\rho_i$,

\item the homomorphisms $\rho_1'$ and $\rho_1''$ are inclusions,

\item for each $(i,j,k,l)\in P$, we have
$$\rho''_i\circ \rho'_j=\rho''_k\circ \rho'_l$$
(note that this condition implies that each $\rho''_i$ extends $\rho'_i$),

\item we have:
$$L'=\rho_1'(L)\ldots\rho_m'(L),\ \ \ L''=\rho_1''(L')\ldots\rho_m''(L').$$
\end{itemize}

\item We denote the $G$-kernel as in the item $(1)$ by $(L,L',L'',\rho',\rho'')$.

\item There is an obvious notion of a \emph{prolongation} of $G$-kernels as in Definition \ref{kerdef}(3), i.e. a $G$-kernel $(L_*,L'_*,L''_*,\rho'_*,\rho''_*)$ is a prolongation of a $G$-kernel $(L,L',L'',\rho',\rho'')$ if and only if, we have $L_*=L'$, $L_*'=L''$ and $\rho'_*=\rho''$.
\end{enumerate}
\end{definition}
\begin{remark}
As before, a $G$-field extension $(K,\rho)\subseteq (L,\rho')$ gives the $G$-kernel $(L,L,L,\rho',\rho')$ which is ``the best one'', and it is its own prolongation. The difference here is that there is no guarantee that a $G$-kernel has a prolongation which is a $G$-field extension. This seems to be the main reason for the (non-)existence of a model companion of the theory of $G$-fields for some groups $G$.
\end{remark}

Assume now for a moment that we just take one relation from $P$, i.e. assume that $(K,\sigma,\tau,\varepsilon,\delta)$ is a difference field such that $\tau\sigma=\varepsilon\delta$. We want to encode the word equality $\tau\sigma=\varepsilon\delta$ in a first-order way.

Let us set $\rho:=(1,\sigma,\tau,\varepsilon,\delta)$ and we assume $V,W,W'$ are $K$-irreducible varieties such that
$$W\subseteq V\times {}^{\sigma}V\times {}^{\tau}V\times {}^{\varepsilon}V\times {}^{\delta}V={}^{\rho}V,$$
$$W'\subseteq W\times {}^{\sigma}W\times {}^{\tau}W\times {}^{\varepsilon}W\times {}^{\delta}W={}^{\rho}W,$$
and all the projections are dominant. We have the appropriate field homomorphisms
$$\sigma',\tau',\varepsilon',\delta':K(V)\to K(W),\ \ \ \sigma'',\tau'',\varepsilon'',\delta'':K(W)\to K(W')$$
defined as in Lemma \ref{lemma1}.

Since ${}^{\varepsilon\delta}V={}^{\tau\sigma}V$, we have the corresponding ``non-trivial diagonal'':
$$\Delta^{\tau\sigma}_{\varepsilon\delta}\subseteq {}^{\rho\rho}V.$$
We need a version of Lemma \ref{mainext} which deals with the word problem for the group $G$. The following result is a ``Word Problem counterpart'' of Lemma \ref{mainext}.
\begin{prop}[Lemma on Word Problem]\label{lowp}
The following conditions are equivalent:
\begin{enumerate}
\item $\tau''\circ \sigma'=\varepsilon''\circ \delta'$,

\item $W'\subseteq \Delta^{\tau\sigma}_{\varepsilon\delta}$.
\end{enumerate}
\end{prop}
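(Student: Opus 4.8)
The plan is to compute the two composite field homomorphisms $\tau''\circ\sigma'$ and $\varepsilon''\circ\delta'$, both going from $K(V)$ to $K(W')$, and to identify each of them as a coordinate projection of $W'$ precomposed with one and the same isomorphism. First I would unfold the definitions from Lemma \ref{lemma1}, namely $\sigma'=\pi^W_{{}^\sigma V}\circ\sigma_V$ and $\tau''=\pi^{W'}_{{}^\tau W}\circ\tau_W$, to get
$$\tau''\circ\sigma'=\pi^{W'}_{{}^\tau W}\circ\tau_W\circ\pi^W_{{}^\sigma V}\circ\sigma_V.$$
The crucial manipulation is to commute the automorphism past the projection: applying Lemma \ref{l0} to the dominant projection $W\to{}^\sigma V$ and the functor ${}^\tau(-)$ gives $\tau_W\circ\pi^W_{{}^\sigma V}=\pi^{{}^\tau W}_{{}^{\tau\sigma}V}\circ\tau_{{}^\sigma V}$. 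Since $\tau_{{}^\sigma V}\circ\sigma_V=(\tau\sigma)_V$ (both extend $\tau\sigma$ on $K$ and fix the variables $X_i$), this yields
$$\tau''\circ\sigma'=\big(\pi^{W'}_{{}^\tau W}\circ\pi^{{}^\tau W}_{{}^{\tau\sigma}V}\big)\circ(\tau\sigma)_V,$$
where the bracketed composite is the pullback $q_1^*$ of the coordinate projection $q_1\colon W'\to{}^{\tau\sigma}V$ onto the $\tau\sigma$-factor of ${}^{\rho\rho}V$.

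Running the identical computation on $\varepsilon''\circ\delta'$, this time feeding the projection $W\to{}^\delta V$ and the functor ${}^\varepsilon(-)$ into Lemma \ref{l0}, produces $\varepsilon''\circ\delta'=q_2^*\circ(\varepsilon\delta)_V$, where $q_2\colon W'\to{}^{\varepsilon\delta}V$ is the projection onto the $\varepsilon\delta$-factor. At this point I would invoke the defining relation $\tau\sigma=\varepsilon\delta$ in $G$: as automorphisms of $K$ the two words coincide, so ${}^{\tau\sigma}V={}^{\varepsilon\delta}V$ and $(\tau\sigma)_V=(\varepsilon\delta)_V$ are literally the same isomorphism $K(V)\to K({}^{\tau\sigma}V)$. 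Because that isomorphism is invertible, it cancels on the right, and the equation $\tau''\circ\sigma'=\varepsilon''\circ\delta'$ becomes equivalent to the equality of pullbacks $q_1^*=q_2^*$.

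Finally I would pass back from algebra to geometry. The maps $q_1$ and $q_2$ are dominant morphisms $W'\to{}^{\tau\sigma}V$ with the same target, and two dominant morphisms of irreducible varieties agree if and only if their pullbacks on function fields agree; hence $q_1^*=q_2^*$ is equivalent to $q_1=q_2$. Unwinding the definitions, $q_1=q_2$ says that the $\tau\sigma$- and $\varepsilon\delta$-coordinates of every point of $W'\subseteq{}^{\rho\rho}V$ coincide, which is exactly the condition $W'\subseteq\Delta^{\tau\sigma}_{\varepsilon\delta}$. Chaining the three equivalences gives $(1)\Leftrightarrow(2)$.

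The step I expect to require the most care is the commuting move together with the index bookkeeping around it. One has to apply Lemma \ref{l0} to the right projection and the right automorphism, and then check that the composite of partial projections $W'\to{}^\tau W\to{}^{\tau\sigma}V$ is genuinely the projection of $W'\subseteq{}^{\rho\rho}V=\prod_{i,j}{}^{\rho_i\rho_j}V$ onto the factor indexed by the word $\tau\sigma$ (and similarly that $W'\to{}^\varepsilon W\to{}^{\varepsilon\delta}V$ lands on the factor indexed by $\varepsilon\delta$). The cleanest way to keep these straight is to assemble everything into a single large commutative diagram of $K$-algebra embeddings, in the spirit of the diagram in the proof of Lemma \ref{mainext}, whose left-hand cells commute by Lemma \ref{l0} and whose outer rectangle encodes the two composites being compared.
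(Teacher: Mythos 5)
Your proposal is correct and follows essentially the same route as the paper: unfold $\tau''\circ\sigma'$ and $\varepsilon''\circ\delta'$ via the definitions from Lemma \ref{lemma1}, commute the twist past the projection using Lemma \ref{l0} to factor each composite as a projection $W'\to{}^{\tau\sigma}V$ (resp. $W'\to{}^{\varepsilon\delta}V$) precomposed with $(\tau\sigma)_V=(\varepsilon\delta)_V$, cancel that bijection, and identify equality of the two projections with containment in the diagonal. The paper's proof is exactly the large commutative diagram you describe in your final paragraph, with the same appeal to Lemma \ref{mainext} for the last equivalence.
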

\begin{proof}
Consider the following commutative diagram:
\begin{equation*}
 \xymatrix{  K(V) \ar[rrd]_{\sigma_V} \ar[rrdd]_{\sigma'} \ar[rrrrd]^{(\tau\sigma)_V} & & & &  \\
& &  K({}^{\sigma}V)\ar[rr]_{\tau_{{}^{\sigma}V}} \ar[d]^{\pi_{{}^{\sigma}V}^{W}} & & K({}^{\tau\sigma}V)\ar[d]^{\pi_{{}^{\tau\sigma}V}^{{}^\tau W}}\\
 & & K(W)  \ar[rr]^{\tau_{W}} \ar[rrd]_{\tau''} &  &  K({}^{\tau}W) \ar[d]^{\pi_{{}^{\tau}W}^{W'}}\\
 & & & & K(W'),  }
\end{equation*}
as well as the corresponding diagram for $(\varepsilon,\delta)$ playing the role of $(\tau,\sigma)$. Since $(\tau\sigma)_V=(\varepsilon\delta)_V$ and both these maps are bijections, we get that the equality
$$\tau''\circ \sigma'=\varepsilon''\circ \delta'$$
is equivalent to the commutativity of the following diagram:
\begin{equation*}
 \xymatrix{  W' \ar[rr]^{\pi^{W'}_{{}^{\tau}W}} \ar[d]_{\pi^{W'}_{{}^{\varepsilon}W}} &  & {}^{\tau}W \ar[d]^{\pi^{{}^{\tau}W}_{{}^{\tau\sigma}V}} \\
{}^{\varepsilon}W \ar[rr]^{\pi^{{}^{\varepsilon}W}_{{}^{\varepsilon\delta}V}} &  & {}^{\varepsilon\delta}V={}^{\tau\sigma}V.}
\end{equation*}
The commutativity of the last diagram is equivalent to the condition $W'\subseteq \Delta^{\tau\sigma}_{\varepsilon\delta}$, similarly, as in Lemma \ref{mainext}.
\end{proof}
The above result gives a relatively easy criterion to check whether a prolongation of $(V,W)$ to  $(W,W')$ gives a right procedure potentially yielding a $G$-field. We prove below the counterpart of Lemma \ref{lemma1} in this context.
First, we introduce a notation generalizing the one from Remark \ref{acfa2}:
$${}^{\rho}V:={}^{\rho_1}V\times \dots \times {}^{\rho_m}V,$$
$${}^{\rho\rho}V:={}^{\rho_1}({}^{\rho}V)\times \dots \times {}^{\rho_m}({}^{\rho}V),$$
$${}^{\rho\cdot \rho}V:={}^{\rho\rho}V\cap \{\text{all possible diagonals}\}.$$
The last intersection means that to obtain ${}^{\rho\cdot \rho}V$ from ${}^{\rho\rho}V$, we identify ${}^{\rho_i}({}^{\rho_j}V)$ with ${}^{\rho_k}({}^{\rho_l}V)$ for all $(i,j,k,l)\in P$.
\begin{cor}\label{pe2}
Suppose that we have
$$W\subseteq {}^{\rho}V,\ \ W'\subseteq {}^{\rho}W\cap {}^{\rho\cdot\rho}V$$
and all the projections $W\to {}^{\rho_i}V,W'\to {}^{\rho_i}W$ are dominant. Then there is a $G$-kernel of the form
$$(K(V),K(W),K(W'),\rho',\rho'').$$
Moreover, there is a $K$-generic point $a\in V(K(V))$ such that $\rho'(a)$ is a $K$-generic point of $W$.
 \end{cor}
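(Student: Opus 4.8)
The plan is to construct the $G$-kernel coordinate ring by coordinate ring, exactly mirroring the construction in Lemma \ref{lemma1}, and then to read off the defining word-problem relations from the geometric hypothesis $W'\subseteq {}^{\rho\cdot\rho}V$ via the Lemma on Word Problem (Proposition \ref{lowp}). First I would set $L:=K(V)$, $L':=K(W)$ and $L'':=K(W')$, and use the identification $\pi^W_V\colon K(V)\hookrightarrow K(W)$ (a monomorphism because $W\to V$ is dominant) to regard $L\subseteq L'$, and similarly $\pi^{W'}_W\colon K(W)\hookrightarrow K(W')$ to regard $L'\subseteq L''$. The homomorphisms are then defined exactly as in Lemma \ref{lemma1}: for each generator index $i$, set
\begin{equation*}
\rho_i':=\pi^W_{{}^{\rho_i}V}\circ (\rho_i)_V\colon K(V)\to K(W),\qquad
\rho_i'':=\pi^{W'}_{{}^{\rho_i}W}\circ (\rho_i)_W\colon K(W)\to K(W'),
\end{equation*}
where $(\rho_i)_V$ and $(\rho_i)_W$ are the ring isomorphisms from the functor subsection. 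The dominance of all the projections $W\to {}^{\rho_i}V$ and $W'\to {}^{\rho_i}W$ guarantees that each $\pi$ above is injective, so each $\rho_i'$ and $\rho_i''$ is a genuine field homomorphism extending $\rho_i$ on $K$. Since $\rho_1=1$ and $({}^{\rho_1}V,{}^{\rho_1}W)=(V,W)$, the maps $\rho_1'$ and $\rho_1''$ are the inclusions, as required.

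Next I would verify the four bullet conditions in Definition \ref{kerdefg}. The ``extends $\rho_i$'' clause is immediate from the construction, and the inclusion clause was just noted. The generation conditions $L'=\rho_1'(L)\ldots\rho_m'(L)$ and $L''=\rho_1''(L')\ldots\rho_m''(L')$ follow because $W\subseteq {}^{\rho}V$ with all projections dominant forces $K(W)$ to be generated over $K$ by the images of the coordinate rings of the ${}^{\rho_i}V$, i.e.\ by $\bigcup_i\rho_i'(K(V))$, and symmetrically $W'\subseteq {}^{\rho}W$ gives the second equality. The substantive clause is the word-problem condition: for each $(i,j,k,l)\in P$ I must check $\rho_i''\circ\rho_j'=\rho_k''\circ\rho_l'$. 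This is precisely where the hypothesis $W'\subseteq {}^{\rho\cdot\rho}V$ is used. By definition ${}^{\rho\cdot\rho}V$ is ${}^{\rho\rho}V$ cut out by all the diagonals $\Delta^{\rho_i\rho_j}_{\rho_k\rho_l}$ for $(i,j,k,l)\in P$, so $W'\subseteq {}^{\rho\cdot\rho}V$ says exactly that $W'$ lies in each such diagonal. Applying Proposition \ref{lowp} with $(\sigma,\tau,\varepsilon,\delta)$ instantiated as $(\rho_j,\rho_i,\rho_k,\rho_l)$ then yields the equality of composite homomorphisms $\rho_i''\circ\rho_j'=\rho_k''\circ\rho_l'$ for each $(i,j,k,l)\in P$. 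This is the heart of the proof, and the main (modest) obstacle is purely bookkeeping: matching the index conventions of Proposition \ref{lowp}, which was stated for a single relation $\tau\sigma=\varepsilon\delta$, against the packaged multi-diagonal notation ${}^{\rho\cdot\rho}V$, and confirming that the ambient square and the various $\pi$-projections in the two statements genuinely agree.

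Finally, for the moreover part I would imitate the moreover part of Lemma \ref{lemma1}. Writing $V\subseteq\Aa^n$ and representing $K(W')$ (and its subfield $K(W)$) via the residue coordinates of the defining ideal, I take $a$ to be the tuple of the first $n$ coordinate functions, so that $a\in V(K(W))$ is a $K$-generic point of $V$ by dominance of $W\to V$ and the identification $K[a]\cong K[V]$ holds. By Lemma \ref{l1}, after the natural $K$-algebra identifications each $\rho_i'(a)$ equals the subtuple of $W$ generic over ${}^{\rho_i}V$, and since these subtuples together are the full generic point of $W\subseteq{}^{\rho}V$, the tuple $\rho'(a):=(\rho_1'(a),\ldots,\rho_m'(a))$ is a $K$-generic point of $W$. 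I expect the entire argument to be routine once the word-problem clause is handled; essentially no new idea beyond Lemma \ref{lemma1} and Proposition \ref{lowp} is needed, only careful coordinate bookkeeping.
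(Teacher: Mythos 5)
Your proposal is correct and follows essentially the same route as the paper: the maps $\rho_i'$, $\rho_i''$ are built exactly as in Lemma \ref{lemma1}, the word-problem relations are extracted from $W'\subseteq {}^{\rho\cdot\rho}V$ via Proposition \ref{lowp}, and the moreover part is handled by the generic-point argument of Lemma \ref{lemma1}. The paper's own proof is just a three-line pointer to these same ingredients, so your write-up is simply a more detailed rendering of the intended argument.
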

\begin{proof} The maps $\rho'_i,\rho''_i$ are obtained in the same was as in the proof of Lemma \ref{lemma1}. Proposition \ref{lowp} guarantees that for each $(i,j,k,l)\in P$, we have
$$\rho''_i\circ \rho'_j=\rho''_k\circ \rho'_l.$$
The moreover part follows again as in the proof of Lemma \ref{lemma1}.
\end{proof}
If we try now to generalize the observations made in Remark \ref{algproof}, we see that we need to find a good notion of a $G$-pair for which the ``$G$-prolongation Lemma'' (an analogue of  Lemma \ref{zprol}) would hold as well as an analogue of Lemma \ref{lemma2}. We comment here more on the general shape of the possible definition of a $G$-pair. A \emph{$G$-pair} should be a pair of varieties $(V,W)$ (over a field $K$ with a $G$-field structure) such that:
\begin{enumerate}
\item $W\subseteq {}^{\rho}V$,

\item all the projections $W\to {}^{\rho_i}V$ are dominant,

\item the pair $(V,W)$ satisfies a right ``$G$-iterativity condition''.
\end{enumerate}
Note that the conditions $(1)$ and $(2)$ are exactly the same as in the case of $F_n$-pairs (see the item $(2)$ below Remark \ref{algproof}). The most difficult task is to find the proper ``$G$-iterativity condition'' (taking care of the Word Problem for $(G,\rho)$), which appears in the condition $(3)$, and  to show that if $(V,W)$ is a $G$-pair, then $(W,W')$ is a $G$-pair as well. We formalize these observations below.
\begin{theorem}\label{algproofgen}
Suppose we have the notion of a $G$-pair as above and assume that we can show the following.
\begin{enumerate}
\item {\bf $G$-Prolongation Lemma}
\\
If $(V,W)$ is a $G$-pair, then $(W,{}^{\rho}W\cap {}^{\rho\cdot\rho}V)$ is a $G$-pair.

\item {\bf Analogue of Lemma \ref{lemma2}}
\\
Suppose that $$\left(K(a),K(\rho'(a)),K(\rho''(\rho'(a)))\right)$$ is a $G$-kernel. Then
$$\left(\locus_K(a),\locus_K(\rho'(a))\right)$$
is a $G$-pair.
\end{enumerate}
Then, the model companion of the theory of $G$-fields (the theory $\gtcf$) exists and we have the following.
\smallskip
\\
{\bf Axioms for $\gtcf$}
\\
For any $G$-pair $(V,W)$, there is $x\in V(K)$ such that $\rho(x)\in W(K)$.
\end{theorem}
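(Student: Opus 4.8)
The plan is to follow the proof of Theorem~\ref{thmacfa} almost verbatim, with the $\Zz$-prolongation Lemma~\ref{zprol} and Proposition~\ref{acfacase} replaced by the hypothesised $G$-Prolongation Lemma (item (1)) and a $G$-version of Proposition~\ref{acfacase} that I would prove first. So the first step is the preparatory claim: \emph{if $(V,W)$ is a $G$-pair, then the $G$-kernel it produces via Corollary~\ref{pe2} admits a prolongation which is a genuine $G$-field extension of $(K,\rho)$, realising a generic $a\in V$ with $\rho(a)\in W$.} To prove it I would set $V_0:=V$, $V_1:=W$, and inductively $V_{m+2}:={}^{\rho}V_{m+1}\cap {}^{\rho\cdot\rho}V_m$. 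Hypothesis (1) guarantees that each $(V_m,V_{m+1})$ is again a $G$-pair, so Corollary~\ref{pe2} furnishes a $G$-kernel on every consecutive triple, and the shape of $V_{m+2}$ together with Proposition~\ref{lowp} makes each such kernel a prolongation of its predecessor (the defining maps of the two kernels on the overlapping field coincide by construction). Taking the direct limit $L:=\bigcup_m K(V_m)$ with $\rho_i^\infty:=\bigcup_m\rho_i^{(m)}$ yields injective field endomorphisms of $L$ satisfying every relation $\rho_i^\infty\circ\rho_j^\infty=\rho_k^\infty\circ\rho_l^\infty$ with $(i,j,k,l)\in P$; since by Assumption~\ref{assume} the set $P$ presents $G$, these endomorphisms respect the multiplication of $G$, and passing to the inversive closure (the several-generator generalisation of the ``actions on sets'' argument behind Proposition~\ref{noncons}) turns them into automorphisms, yielding a genuine $G$-field $(L^*,\rho^*)\supseteq(K,\rho)$. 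The ``moreover'' clause of Corollary~\ref{pe2} supplies the desired $a$.

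With this in hand, the equivalence is proved as in Theorem~\ref{thmacfa}. For $(\Rightarrow)$, if $(K,\rho)$ is existentially closed I take an arbitrary $G$-pair $(V,W)$, feed it into the construction above to obtain $(L^*,\rho^*)$ and $a\in V(L^*)$ with $\rho^*(a)\in W(L^*)$; the quantifier-free existential statement ``$\exists x\,(x\in V\wedge \rho(x)\in W)$'' thus holds in a $G$-field extension, so by existential closedness it holds in $(K,\rho)$, which is exactly the axiom. For $(\Leftarrow)$, given a quantifier-free $\varphi(x)$ I would first normalise it exactly as in Theorem~\ref{thmacfa}: absorb the inequation by the $Hy-1$ trick and enlarge the tuple so that only the length-$\leq 1$ words $\rho_i(x)$ occur, reducing to $\varphi(x):\ \bigwedge_i F_i(\rho(x))=0$. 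If $\varphi$ is realised by some $a$ in a $G$-field extension, then $\rho'$ and $\rho''$ being restrictions of honest automorphisms makes $(K(a),K(\rho'(a)),K(\rho''(\rho'(a))))$ a $G$-kernel, so hypothesis (2) turns its loci into a $G$-pair $(V,W)$. The axiom then gives $x\in V(K)$ with $\rho(x)\in W(K)$, and because each $F_i$ vanishes at the generic point $\rho'(a)$ of $W=\locus_K(\rho'(a))$ it vanishes at $\rho(x)$ as well, so $\varphi(x)$ holds in $(K,\rho)$.

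To conclude that these axioms really define the model companion $\gtcf$, I would note that they are first-order by the argument of Remark~\ref{acfno} (which the paper records as generalising to an arbitrary $G$), and that the theory of $G$-fields is $\forall\exists$-axiomatisable, hence inductive, so existentially closed $G$-fields exist and every $G$-field embeds in one. Combined with the equivalence above, the first-order ``Axioms for $\gtcf$'' cut out precisely the existentially closed $G$-fields, which is the defining property of the model companion.

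The main obstacle is the preparatory construction, and specifically the passage from the tower of $G$-kernels to a bona fide $G$-field. In the free case $\Zz$ (or $F_n$) there are no relations, so the limit is automatically a difference field; here one must verify that the Word-Problem relations encoded by $P$ are preserved in the direct limit and that the resulting endomorphisms can be inverted \emph{simultaneously} to an action of $G$ by automorphisms. This is exactly the juncture at which hypotheses (1) and (2) are tailored to be applied, and the only genuinely new difficulty beyond Theorem~\ref{thmacfa} is arranging the inversive closure coherently for several non-commuting generators.
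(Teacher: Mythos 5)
Your proposal follows the paper's own proof essentially verbatim: the forward direction builds the tower of $G$-pairs via hypothesis (1), feeds consecutive pairs into Corollary \ref{pe2}, and takes the union to get a $G$-field extension realizing a generic point (exactly the paper's ``repeated usage of the $G$-Prolongation Lemma, similarly as in the proof of Prop.~\ref{acfacase}''), while the backward direction normalises the formula and applies hypothesis (2) just as the paper does. The one place you are more explicit than the paper — checking that the relations in $P$ survive the direct limit and that the resulting endomorphisms extend to a genuine action of $G$ by automorphisms (the ``inversive closure'' step) — is a real subtlety that the paper leaves implicit, and your treatment of it is sound.
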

\begin{proof}
This proof basically repeats the proof of Theorem \ref{thmacfa} in the more general context of $G$-fields.
\\
$(\Rightarrow)$ Since $(K,\rho)$ is existentially closed, it is enough to find a $G$-extension $(K,\rho)\subseteq (L,\widetilde{\rho})$ and $a\in V(L)$ such that $\widetilde{\rho}(a)\in W(L)$. Let $W':={}^{\rho}W\cap {}^{\rho\cdot\rho}V$. By the $G$-Prolongation Lemma, $(W,W')$ is a $G$-pair. In particular, the assumptions of Corollary \ref{pe2} are satisfied. By Corollary \ref{pe2},
$$(K(V),K(W),K(W'),\rho',\rho'')$$
is a $G$-kernel, and there is a $K$-generic point $a\in V(K(V))$ such that for each $\rho'(a)$ is a $K$-generic point of $W$. By repeated usage of the $G$-Prolongation Lemma (similarly as in the proof of Prop. \ref{acfacase}), this $G$-kernel has a prolongation $(L,\widetilde{\rho})$ which is a $G$-extension. Hence $a\in V(L)$ and $\widetilde{\rho}(a)\in W(L)$.
\\
$(\Leftarrow)$ Suppose now that $(K,\rho)\models G-\tcf$. Let $\varphi(x)$ be a quantifier-free formula over $K$. As in the proof of Theorem \ref{thmacfa}, we can assume that:
$$\varphi(x):\ \ \ F_1(x,\sigma(x))=0\wedge \ldots\wedge F_m(x,\sigma(x))=0,$$
where $F_1,\ldots,F_m\in K[X,X']$ for some $m\in \Nn$.
\\
Assume that there is a $G$-extension $(K,\rho)\subseteq (L,\rho')$ such that
$$(L,\rho')\models \exists x\ \varphi(x).$$
Let $a$ be a tuple in $L$ satisfying $\varphi(x)$. Then
$$\left(K(a),K(a,\rho'(a)),\rho'(\rho'(a)))\right)$$
is a finitely generated $G$-kernel. Let
$$V:=\left(\locus_K(a)\right),\ \ \ W:=\locus_K\left(\rho'(a)\right).$$
By Analogue of Lemma \ref{lemma2}, $(V,W)$ is a $G$-pair. By Axioms for $G-\tcf$, there is $x\in V(K)$ such that $\rho(x)\in W(K)$ which means that $(K,\rho)\models \exists x\ \varphi(x)$.
\end{proof}
\begin{remark}
Since the theory $(\Zz\times \Zz)-\tcf$ does not exists, then obviously the above procedure does not work for the case of $G=\Zz\times \Zz$. A practical obstacle is the following: for any possible notion of a $(\Zz\times \Zz)$-pair, if $(V,W)$ is a $(\Zz\times \Zz)$-pair and $W'$ is defined as in the proof of Theorem \ref{algproofgen}, then some projection $W'\to {}^{\rho_i}W$ will \emph{not} be dominant.
\end{remark}
In the next section, we will find a good notion of a $G$-pair and prove the corresponding $G$-prolongation lemma as well as an analogue of Lemma \ref{lemma2} for finitely generated, virtually free groups.

\section{Virtually free groups}\label{secvfg}

In this section, we show that the theory $\gtcf$ exists for a finitely generated, virtually free group $G$. To deal with the finite group case, we reformulate the results proven in \cite{HK3}. Then we use the Bass-Serre theory to pass from the case of finite groups to the case of virtually free groups. Since some virtually free groups have an explicit description as semi-direct products of a free group and a finite group, we also give explicit axioms in such cases (Section \ref{secgen1}), where we do not use the Bass-Serre theory.

\subsection{Finite groups}\label{n0}
We analyze here, using the terminology of this paper, the results from \cite{HK3}. Let $G_0=\{g_1,\ldots,g_e\}$ be a finite group,
$$\rho:=g:=(g_1,\ldots,g_e),$$
and $(K,g)$ be a $G_0$-field. We fix a pair of varieties $(V,W)$ such that $W\subseteq {}^{g}V$ and $W$ projects generically on each ${}^{g_i}V$. For each $i=1,\ldots,e$, we get the permutation $\lambda^i:G_0\to G_0$ induced by the left multiplication by $g_i$ and the corresponding algebraic morphisms:
$$\lambda^i_V: {}^{g}V\to {}^{g_ig}V;\ \ \ \lambda^V: {}^{g}V\to {}^{gg}V,\ \ \lambda^V:=(\lambda^1_V,\ldots,\lambda^e_V).$$
We also define (as in Theorem \ref{algproofgen}):
$$W':={}^{g}W\cap {}^{g\cdot g}V.$$
\begin{lemma}\label{finiteit}
The following are equivalent.
\begin{enumerate}
\item The pair $(V,W)$ satisfies the ``$G$-iterativity condition'' as in $(\clubsuit^{\mathrm{g}})$ from \cite[Remark 2.7(2)]{HK3}, which says that for each $i\leqslant e$, we have
$$\lambda^i_V(W)={}^{g_i}W.$$

\item $\lambda^V(W)\subseteq {}^{g}W$.

\item $\lambda^V(W)=W'$.

\item $W'$ projects generically on each ${}^{g_i}W$.
\end{enumerate}
Moreover, if the above equivalent conditions for $(V,W)$ are satisfied, then they are satisfied for $(W,W')$ as well.
\end{lemma}
\begin{proof}
We observe first that since ${}^{g\cdot g}V=\lambda^V({}^{g}V)$, we obtain the following:
\begin{equation}
{}^{g}W\cap {}^{g\cdot g}V=\lambda^V(W)\cap {}^{g\cdot g}V.\tag{$*$}
\end{equation}
The equivalence between $(1)$ and $(2)$ is clear from the definitions.

For the equivalence between $(2)$ and $(3)$, it is enough to use the equality $(*)$ above.

To see that $(3)$ implies $(4)$ and that $(4)$ implies $(1)$, it is enough to notice that for each $i=1,\ldots,e$ we have
$$\im(W'\to {}^{g_i}W)=\lambda^i_V(W)\cap {}^{g_i}W.$$

For the moreover part, one needs to notice that if the $(V,W)$ satisfies the equivalent conditions $(1)$--$(4)$, then
$$\lambda_W^i\left(\lambda^V(W)\right)={}^{g_i}\left(\lambda^V(W)\right),$$
for each $i\leqslant e$.
\end{proof}
\begin{remark}
\begin{enumerate}
\item The equivalent conditions from Lemma \ref{finiteit} give us the missing $G_0$-iterativity condition (which was discussed before Theorem \ref{algproofgen}). Hence, we have now the definition of a $G_0$-pair.

\item The moreover part of Lemma \ref{finiteit} is exactly the $G_0$-prolongation lemma.

\item As explained in \cite[Remark 2.7(3)]{HK3}, to obtain an axiomatization of the theory $G_0-\tcf$, one needs to consider only the varieties of the form $V=\Aa^n$.
\end{enumerate}
\end{remark}
Now, we only need a counterpart of Lemma \ref{lemma2} (below).
\begin{lemma}\label{sstrong0}
Suppose that $(L,L',L'',g',g'')$ is a $G_0$-kernel. For $a\in L^m$, let
$$V:=\locus_K(a),\ \ \ W:=\locus_K(g'(a)).$$
Then $(V,W)$ is a $G_0$-pair.
\end{lemma}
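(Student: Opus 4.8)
The plan is to prove that $(V,W)$ is a $G_0$-pair by verifying the three defining conditions: the inclusion $W\subseteq {}^{g}V$, the generic dominance of all projections $W\to {}^{g_i}V$, and (crucially) the $G_0$-iterativity condition from Lemma \ref{finiteit}. The first two conditions should follow almost immediately from the fact that $W=\locus_K(g'(a))$, exactly as in the proof of Lemma \ref{lemma1} and Lemma \ref{lemma2}: since $L'=g'_1(L)\cdots g'_e(L)$ and each $g'_i$ extends $g_i$, the tuple $g'(a)=(g'_1(a),\ldots,g'_e(a))$ lives in ${}^{g}V$ by Lemma \ref{loci} (which gives ${}^{g_i}V=\locus_K(g'_i(a))$), and each coordinate block $g'_i(a)$ is a $K$-generic point of ${}^{g_i}V$, so each projection is dominant.

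The heart of the matter, and the step I expect to be the main obstacle, is verifying the iterativity condition, for which I would use the equivalent form $(1)$ from Lemma \ref{finiteit}, namely $\lambda^i_V(W)={}^{g_i}W$ for each $i\leqslant e$. Here is where the $G_0$-kernel axioms — in particular the relation $g''_i\circ g'_j=g''_k\circ g'_l$ for $(i,j,k,l)\in P$, encoding the group law — must be translated into the geometric statement about the diagonal maps $\lambda^i_V$. The point is that $\lambda^i_V$ is the morphism permuting the coordinates of ${}^{g}V$ according to left multiplication by $g_i$, i.e. sending the ${}^{g_j}V$-coordinate to the ${}^{g_ig_j}V$-coordinate. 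Applied to the generic point $g'(a)=(g'_j(a))_j$ of $W$, the coordinate-wise action ought to produce the point $(g'_{\lambda^i(j)}(a))_j$, which, via the kernel's compatibility $g''_i(g'_j(a))=g'_{i\cdot j}(a)$ (the geometric shadow of the group relation), is precisely $g_i$ applied coordinatewise to $g'(a)$ — that is, a generic point of ${}^{g_i}W=\locus_K(g_i(g'(a)))$.

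More concretely, I would argue that $\lambda^i_V$ maps the generic point of $W=\locus_K(g'(a))$ to a generic point of ${}^{g_i}W$, so that $\lambda^i_V(W)\subseteq {}^{g_i}W$, and then obtain the reverse inclusion (or equality of loci) by symmetry, applying the same reasoning to $g_i^{-1}$, or by a dimension count since both varieties are the loci of tuples related by the bijective coordinate permutation $\lambda^i$. The key technical bookkeeping is to match the abstract field-homomorphism identities $g''_i\circ g'_j=g'_{ij}$ against the geometric coordinate relabeling effected by $\lambda^i_V$, which is exactly the content that Proposition \ref{lowp} (the Lemma on Word Problem) packages for individual relations; here one invokes it uniformly across all of $G_0$. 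I anticipate the subtlety lies in keeping the indexing of the coordinates of ${}^{g}V$ consistent with the left-multiplication permutation $\lambda^i$ and in confirming that genericity is preserved under these coordinate permutations — but since $\lambda^i_V$ is an isomorphism onto its image and the kernel relations guarantee the identifications land in the correct diagonals, the loci must coincide, giving the desired iterativity condition and hence completing the proof that $(V,W)$ is a $G_0$-pair.
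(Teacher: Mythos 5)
Your proposal is correct and follows essentially the same route as the paper: the inclusion and dominance are handled as in Lemma \ref{lemma2}, and the iterativity condition is obtained from the computation ${}^{g_i}W=\locus_K\bigl(g_i''(g'(a))\bigr)=\locus_K\bigl(\lambda^i_V(g'(a))\bigr)=\lambda^i_V(W)$, using the kernel relations $g_i''\circ g_j'=g'_{i\cdot j}$ and the analogue of Lemma \ref{loci}. The only cosmetic difference is that the paper gets the equality of loci in one step (since $\lambda^i_V$ is an isomorphism carrying the generic point of $W$ to the generic point of ${}^{g_i}W$), so your separate reverse-inclusion or dimension-count step is unnecessary, and Proposition \ref{lowp} is not actually invoked.
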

\begin{proof}
Clearly, we have $W\subseteq {}^{\rho}V$. Similarly as in the proof of Lemma \ref{lemma2}, one can show that all the projections $W\to {}^{g_i}V$ are dominant.  Therefore, we need to check only the $G_0$-iterativity condition from the definition of a $G_0$-pair. For each $i=1,\ldots,n$; we have (using an obvious analogue of Lemma \ref{loci}):
\begin{IEEEeqnarray*}{rCl}
{}^{g_i}W & = & \locus_K\left(g_i''\left(g_1'(a)\right),\ldots,g_i''\left(g_e'(a)\right)\right)\\
 & = & \locus_K\left(\lambda^i_V\left(g_1'(a),\ldots,g_e'(a)\right)\right) \\
 &= & \lambda^i_V(W).
\end{IEEEeqnarray*}
Hence $(V,W)$ is a $G_0$-pair.
\end{proof}
Using Theorem \ref{algproofgen}, we see that the below axioms describe the theory $G_0-\tcf$ (a model companion of the theory of fields with $G_0$-actions).
\smallskip
\\
\textbf{Axioms for $G_0-\tcf$}
\\
The structure $(K,g_1,\ldots,g_e)$ is a $G_0$-field such that for each $G_0$-pair $(V,W)$, there is $x\in V(K)$ such that $(g_1(x),\ldots,g_e(x))\in W(K)$.

\subsection{Amalgamated products}\label{secampr}
Let us assume that $G=B\ast_A C$, $B=\{b_1,\ldots,b_m\}$, $C=\{c_1,\ldots,c_l\}$ and $A=\{b_1,\ldots,b_k\}$, where $b_1=c_1,\ldots,b_k=c_k$. We define our sequence of generators of $G$ in the obvious way:
$$\rho:=(b_1,\ldots,b_m,c_{k+1},\ldots,c_l).$$
Let us fix a $G$-field $(K,\rho)$ and varieties $V,W$ such that $W\subseteq {}^{\rho}V$. We denote by $W_B$ be the (Zariski closure of the) projection of $W$ on ${}^{(b_1,\ldots,b_m)}V$, and by $W_C$ the (Zariski closure of the) projection of $W$ on ${}^{(c_1,\ldots,c_l)}V$.
\begin{definition}\label{def1am}
We say that $(V,W)$ is a \emph{$(B\ast_A C)$-pair}, if:
\begin{itemize}
\item $(V,W_B)$ is a $B$-pair;

\item $(V,W_C)$ is a $C$-pair.
\end{itemize}
\begin{remark}
If $(V,W)$ is a $(B\ast_A C)$-pair, then all the projections
$W\to {}^{\rho_i}V$ are automatically dominant.
\end{remark}
\end{definition}
\begin{example}\label{simpleamal}
It may be convenient to have in mind the simplest case where:
$$G=C_2\ast C_2=\langle\sigma\rangle\ast\langle\tau\rangle,\ \ \ \rho:=(1,\sigma,\tau),\ \ \ \rho\rho=\left[\begin{array}{ccc}
1          & \sigma     & \tau        \\
\sigma     & 1          & \sigma\tau  \\
\tau       & \tau\sigma &  1
\end{array}\right].$$
For $W\subseteq V\times {}^{\sigma}V\times {}^{\tau}V$, let $W_{\sigma}$ be the projection of $W$ on $V\times {}^{\sigma}V$, and $W_{\tau}$ be the projection of $W$ on $V\times {}^{\tau}V$. Then $(V,W)$ is a \emph{$(C_2\ast C_2)$-pair}, if $(V,W_{\sigma})$ is a $C_2$-pair and $(V,W_{\tau})$ is a $C_2$-pair.
\end{example}
We define as usual:
$$W':={}^{\rho}W\cap {}^{\rho\cdot \rho}V.$$
\begin{prop}[$(B\ast_A C)$-Prolongation Lemma]\label{g1astg2}
If $(V,W)$ is a $(B\ast_A C)$-pair, then $(W,W')$ is a $(B\ast_A C)$-pair as well.
\end{prop}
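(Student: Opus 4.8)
The plan is to unwind the definition of a $(B\ast_A C)$-pair and reduce the whole statement to the two finite groups $B$ and $C$, which are already covered by Lemma \ref{finiteit}. By Definition \ref{def1am}, $(W,W')$ is a $(B\ast_A C)$-pair exactly when $(W,(W')_B)$ is a $B$-pair and $(W,(W')_C)$ is a $C$-pair, where $(W')_B$ (resp. $(W')_C$) denotes the Zariski closure of the projection of $W'$ onto ${}^{(b_1,\dots,b_m)}W$ (resp. onto ${}^{(c_1,\dots,c_l)}W$). Since $B$ and $C$ enter the amalgam symmetrically, it suffices to prove that $(W,(W')_B)$ is a $B$-pair; the argument for $C$ is obtained by interchanging the roles of $B$ and $C$.

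First I would record what the finite theory already gives. Because $(V,W)$ is a $(B\ast_A C)$-pair, the pair $(V,W_B)$ is a $B$-pair, so the moreover part of Lemma \ref{finiteit}, applied to the finite group $B$, shows that $(W_B,W_B')$ is a $B$-pair, where $W_B':={}^{(b_1,\dots,b_m)}W_B\cap {}^{b\cdot b}V$ is the finite $B$-prolongation of $W_B$. The dominant projection $W\to W_B$ onto the first $m$ coordinates induces a projection $(W')_B\to W_B'$, while the block indexed by $b_1=1$ gives the projection $(W')_B\to W$; thus $(W')_B$ sits over the base $W$ as required, the containment $(W')_B\subseteq {}^{(b_1,\dots,b_m)}W$ is immediate, and the dominance of the projections $(W')_B\to {}^{b_i}W$ will follow by combining the finite-$B$ dominance for $W_B'$ with the $C$-pair dominance of $(V,W_C)$ once $B$-iterativity is in hand.

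The heart of the proof is the $B$-iterativity of $(W,(W')_B)$, i.e. the equalities $\lambda^{i}_{W}\big((W')_B\big)={}^{b_i}(W')_B$ for $i\le m$ (condition $(1)$ of Lemma \ref{finiteit} with base $W$). I would verify this directly from the diagonal description $W'={}^{\rho}W\cap {}^{\rho\cdot\rho}V$: writing a point of ${}^{\rho}W$ as a family with blocks indexed by the generators $\rho_s$ and internal coordinates living in ${}^{\rho_s\rho_t}V$, the scheme ${}^{\rho\cdot\rho}V$ is cut out by the identifications coming from relations $\rho_s\rho_t=\rho_u\rho_v$. Projecting to the blocks with $\rho_s=b_i\in B$ retains the \emph{pure} relations $b_i\rho_t=b_{i'}\rho_{t'}$ with $\rho_t,\rho_{t'}\in B$ (these are precisely the relations defining $W_B'$) together with the \emph{mixed} relations having $\rho_t\in C\setminus A$. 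The key point is that left multiplication by any $b\in B$ sends this entire system of identifications to itself — if $b_i\rho_t=b_{i'}\rho_{t'}$ then $(bb_i)\rho_t=(bb_{i'})\rho_{t'}$ is again such a relation, and the twist ${}^{b}(-)$ moves the block $b_i$ to $bb_i$ without touching the internal index $\rho_t$ — so $\lambda^{i}_W$ carries the locus defining $(W')_B$ onto the locus defining ${}^{b_i}(W')_B$. Where this argument glosses over image-versus-intersection I would instead pass to a $K$-generic point $a$ of $V$ with $\rho'(a)$ generic in $W$ and invoke the converse Lemma \ref{sstrong0} for $B$, checking the relations $b_i''\circ b_j'=b_k''\circ b_l'$ by Proposition \ref{lowp}.

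The main obstacle is exactly the presence of the mixed relations produced by the amalgamation over $A$, namely the identities $\rho_i\rho_j=\rho_k\rho_l$ of the shape $bc=(ba^{-1})(ac)$ with $a=b'^{-1}b\in A$, which tie a $B$-generator to a $C$-generator; these make $(W')_B$ cut out by strictly more equations than the pure finite-$B$ prolongation $W_B'$. One must check that the extra equations (i) do not destroy dominance of $(W')_B\to {}^{b_i}W$, for which I would use that they only constrain the non-base $B$-blocks in terms of the $C$-data already recorded in the base $W$, so that the $C$-iterativity of $(V,W_C)$ furnishes consistent values; and (ii) are permuted among themselves by the $B$-action, which is the computation above and is what keeps $B$-iterativity intact. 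Once $(W,(W')_B)$ is known to be a $B$-pair, the symmetric argument gives that $(W,(W')_C)$ is a $C$-pair, and hence $(W,W')$ is a $(B\ast_A C)$-pair.
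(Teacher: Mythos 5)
Your reduction to showing that $(W,(W')_B)$ is a $B$-pair and $(W,(W')_C)$ is a $C$-pair is exactly right, and your observation that the genuinely new difficulty comes from the mixed relations $bc=(ba^{-1})(ac)$ created by the amalgamation over $A$ is a good one (the paper does not even comment on them explicitly). Your argument for $B$-iterativity — that left multiplication by $b\in B$ permutes the defining identifications among the $B$-indexed blocks — is in substance the same mechanism the paper uses. But the paper packages everything through one explicit structural identity, namely
$$W_B'={}^{b_1}W\times_{W_B}{}^{b_2}W\times_{W_B}\dots \times_{W_B}{}^{b_m}W,$$
where each ${}^{b}W\to W_B$ is the composite of the block projection ${}^{b}W\to{}^{b}W_B$ with the inverse coordinate permutation $(\lambda^b_V)^{-1}$. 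From this description the dominance of the projections $(W')_B\to{}^{b_i}W$ is an immediate application of Fact \ref{genfact}, and iterativity follows by comparing the description of $(W')_B$ with the analogous description of ${}^{b}(W')_B$ over $\lambda^b_V(W_B)$. You never establish any such description of $(W')_B$, and this is where your proposal has a genuine gap.

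Concretely, the gap is the dominance step. You assert that the mixed relations ``do not destroy dominance of $(W')_B\to {}^{b_i}W$'' because ``they only constrain the non-base $B$-blocks in terms of the $C$-data already recorded in the base $W$, so that the $C$-iterativity of $(V,W_C)$ furnishes consistent values.'' This is precisely the claim that needs proof, and it is not obvious: for $b_k=b_ia$ with $a\in A$, the pure $B$-relations together with the mixed relations force the entire block $w_{b_k}$ to be the $\lambda^a$-permutation of $w_{b_i}$ (on \emph{all} coordinates, the $C\setminus A$ ones included), and one must then check that this forced point actually lies in ${}^{b_k}W$; the hypotheses of Definition \ref{def1am} only control the images $W_B$ and $W_C$ separately, so ``consistent values'' do not simply fall out of $C$-iterativity — some argument identifying what the projection of $W'$ onto the $B$-blocks actually is (the paper's fiber-product identity, or an equivalent generic-point computation) is required. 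Your fallback route also has an ordering problem: you propose to settle iterativity first by invoking Lemma \ref{sstrong0} and Proposition \ref{lowp} at a generic point, but building the relevant $B$-kernel $(K(W),K((W')_B),\dots)$ already presupposes that the projections $(W')_B\to{}^{b_i}W$ are dominant (otherwise there are no field embeddings to speak of), while you defer dominance until ``once $B$-iterativity is in hand.'' To repair the proof, establish the fiber-product description of $(W')_B$ first; both remaining conditions then follow as in the paper.
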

\begin{proof}
For any $b\in B$, let
$$\lambda^b_V:W_B\to {}^{b}W_B$$
be the coordinate permutation map, which is defined as in the beginning of Section \ref{n0}. We have the following:
$$W_B'={}^{b_1}W\times_{W_B}{}^{b_2}W\times_{W_B}\dots \times_{W_B}{}^{b_m}W,$$
where for each $b\in B$, the dominant morphism $\alpha_b:{}^{b}W\to W_B$ is defined as the following composition:
\begin{equation*}
 \xymatrix{{}^{b}W \ar[rr]^{{}^{b}\left(\pi^{W}_{W_B}\right)} &  &   {}^{b}W_B \ar[rr]^{\left(\lambda^b_V\right)^{-1}} &  & W_B.}
\end{equation*}
For any $b\in B$, we also have
$${}^{b}W_B'={}^{bb_1}W\times_{\lambda^b_V(W_B)}{}^{bb_2}W\times_{\lambda^b_V(W_B)}\dots \times_{\lambda^b_V(W_B)}{}^{bb_m}W,$$
where each dominant morphism ${}^{bb_i}W\to \lambda^b_V(W_B)$ is defined as ${}^{b}\alpha_{b_i}$.

By Fact \ref{genfact}, the dominance condition from the definition of a $B$-pair for $(W,W'_B)$ holds. For any $b\in B$, we also obtain
$$\lambda^b_W(W_B')={}^{b}W_{B}',$$
so the iterativity condition from the definition of a $B$-pair (appearing in Lemma \ref{finiteit}(1)) for $(W,W'_B)$ holds as well. Similarly, $(W,W'_C)$ is a $C$-pair, hence $(W,W')$ is a $(B\ast_A C)$-pair.
\end{proof}
\begin{remark}
Since the definition of a $(B\ast_A C)$-pair (Definition \ref{def1am}) was rather easy and general, one could wonder whether it is possible to:
\begin{enumerate}
  \item take groups $B,C$ such that we have ``good'' notions of $B$-pairs and $C$-pairs (i.e. both the $B$-Prolongation Lemma and the $C$-Prolongation Lemma hold);
  \item define the notion of a $(B\ast C)$-pair exactly as in Definition \ref{def1am};
  \item prove the $(B\ast C)$-Prolongation Lemma.
\end{enumerate}
One can definitely succeed with the items $(1)$ and $(2)$ above, e.g. by taking as $B$ and $C$ any finitely generated virtually free groups (\emph{after} proving the main result of this paper, i.e. Theorem \ref{mainthm}). However, there would be the following issues then.
\begin{itemize}
  \item The class of finitely generated virtually free groups is closed under taking free products, so (even after succeeding with the item $(3)$ above) we would not get anything new, i.e. we would not get any improvement of Theorem \ref{mainthm}. And because of Conjecture \ref{conj1}, we also do not see any way for a possible disproving of the item $(3)$. Actually, Conjecture \ref{conj1} (a posteriori) \emph{implies} that the item $(3)$ holds.

  \item Showing the item $(3)$ may be difficult in general (i.e. for an arbitrary ``good'' notions of $B$-pairs and $C$-pairs), since in the proof of Prop. \ref{g1astg2}, we used some particular properties of the notion of a $G$-pair for a finite group $G$. More specifically, the notion of a $G$-pair with respect to a finite $G$ ``behaves nicely'' with respect to fiber products, which was used in the proof of Prop. \ref{g1astg2}. One could probably describe axiomatically all the ``nice behaviour'' needed, but we do not think it would be very fruitful, because of the previous item above.
\end{itemize}
\end{remark}
Finally, we consider the most general case we need here, that is we assume that we have a finite \emph{tree of} finite \emph{groups} $(B(-),T)$. It means the following:
\begin{itemize}
\item $T$ is a tree with vertices $1,\ldots,t$ and edges of the form $(i,j)$, where $i\neq j$ and $i,j\in \{1,\ldots,t\}$;

\item for each vertex $i$ of $T$, there is a finite group $B_i$;

\item for each edge $(i,j)$ of $T$, there is a group $A_{ij}$, which is a subgroup of both the groups $B_i$ and $B_j$.
\end{itemize}
Then we define
$$G:=\pi_1(B(-),T),$$
i.e. $G$ is the appropriate amalgamated free product, as described in \cite[Example I.3.5(vi)]{Dicks}. We define the sequence $\rho$ as the concatenation of $B_1,\ldots,B_t$ amalgamated along the subsets $A_{ij}$.
\begin{example}
If $T$ is the tree with two vertices $1,2$ and one edge $(1,2)$, then we get
$$\pi_1(B(-),T)=B_1\ast_{A_{12}}B_2,$$
as at the beginning of this subsection.
\end{example}
For each vertex $i$ in $T$, let $\rho_i$ be the subsequence of $\rho$ corresponding to $B_i$. We assume that $(K,\rho)$ is a $G$-field and $(V,W)$ are varieties such that $W\subseteq {}^{\rho}V$. Then for each vertex $i$ in $T$, let $W_i$ be the (Zariski closure of the) projection of $W$ on ${}^{\rho_i}V$.
\begin{definition}\label{defamgen}
We say that $(V,W)$ is a \emph{$\pi_1(B(-),T)$-pair}, if for each vertex $i$ in $T$, $(V,W_i)$ is a $B_i$-pair.
\end{definition}
The proof of the next result is basically the same as the proof of Prop. \ref{g1astg2}, and the definition of the variety $W'$ is the same as well.
\begin{prop}[$\pi_1(B(-),T)$-Prolongation Lemma]\label{treeprol}
If $(V,W)$ is a $\pi_1(B(-),T)$-pair, then $(W,W')$ is a $\pi_1(B(-),T)$-pair as well.
\end{prop}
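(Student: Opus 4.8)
The plan is to reduce the statement about $\pi_1(B(-),T)$-pairs to repeated application of the finite-group case (Lemma~\ref{finiteit}), exactly as the proof of Proposition~\ref{g1astg2} handled the single amalgamation $B\ast_A C$. By Definition~\ref{defamgen}, to show that $(W,W')$ is a $\pi_1(B(-),T)$-pair I must verify, for each vertex $i$ in $T$, that $(W,W'_i)$ is a $B_i$-pair, where $W'_i$ denotes the Zariski closure of the projection of $W'={}^{\rho}W\cap {}^{\rho\cdot\rho}V$ onto ${}^{\rho_i}W$. Since each $B_i$ is finite, the defining conditions for a $B_i$-pair are precisely the dominance of all the projections together with the $B_i$-iterativity condition of Lemma~\ref{finiteit}(1).

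First I would compute $W'_i$ explicitly as an iterated fiber product, mimicking the formula for $W'_B$ in Proposition~\ref{g1astg2}. For each generator $b$ in $B_i$, one has the coordinate-permutation map $\lambda^b_V\colon W_i\to {}^{b}W_i$, and using the relation $(*)$-style identity ${}^{\rho}W\cap {}^{\rho\cdot\rho}V$ restricted to the $B_i$-block, I expect to obtain
\[
W'_i=\prod_{b\in B_i}{}^{b}W\ \big/\ W_i,
\]
the fiber product over $W_i$ of the translates ${}^{b}W$ along the dominant morphisms $\alpha_b\colon {}^{b}W\to W_i$ built from $\big(\lambda^b_V\big)^{-1}\circ {}^{b}(\pi^W_{W_i})$. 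This is the key structural observation: the $\pi_1(B(-),T)$-structure is block-diagonal across the vertices, so the global intersection ${}^{\rho\cdot\rho}V$ restricts on each vertex block to exactly the diagonals governing the finite group $B_i$, and nothing in the formula for the $i$-th projection of $W'$ involves the other vertices' generators. Once this fiber-product description is in hand, dominance of $W'_i\to {}^{b}W$ follows immediately from Fact~\ref{genfact}, and the iterativity equality $\lambda^b_W(W'_i)={}^{b}W'_i$ follows by the same translate-of-fiber-product manipulation used for $\lambda^b_W(W_B')={}^{b}W_B'$ in Proposition~\ref{g1astg2}.

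The only real obstacle is bookkeeping: I must confirm that the diagonals appearing in ${}^{\rho\cdot\rho}V$ that involve a vertex $i$ are controlled entirely by the Word-Problem relations $(p,q,r,s)\in P$ with all four indices lying in the block corresponding to $B_i$. This is where the hypothesis that $T$ is a \emph{tree} (rather than a graph with cycles) enters: in an amalgamated product $\pi_1(B(-),T)$ every relation $\rho_p\rho_q=\rho_r\rho_s$ comes either from a single vertex group $B_i$ or from the identification of a common edge subgroup $A_{ij}$, and the tree condition prevents any extra relations from coupling the blocks in a way that would spoil the per-vertex computation. Because the edge subgroups $A_{ij}$ are shared subsets of adjacent $B_i$ and $B_j$, the relevant diagonals on overlapping coordinates are consistent, so the projection onto ${}^{\rho_i}W$ genuinely factors through the finite-group fiber product above.

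Having verified for each vertex $i$ that $(W,W'_i)$ is a $B_i$-pair — dominance from Fact~\ref{genfact} and iterativity from the translate-of-fiber-product identity — I conclude by Definition~\ref{defamgen} that $(W,W')$ is a $\pi_1(B(-),T)$-pair. Thus the $\pi_1(B(-),T)$-Prolongation Lemma follows, the entire argument being a vertex-by-vertex reduction to Lemma~\ref{finiteit} that parallels Proposition~\ref{g1astg2} with one amalgamation replaced by the finite tree of amalgamations.
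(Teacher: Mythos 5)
Your proposal is correct and follows essentially the same route as the paper: the paper's proof of Proposition~\ref{treeprol} simply states that for each vertex $i$ one checks that $(W,W'_i)$ is a $B_i$-pair exactly as in the proof of Proposition~\ref{g1astg2}, which is precisely your vertex-by-vertex fiber-product argument via Fact~\ref{genfact} and Lemma~\ref{finiteit}. Your additional remark on why the tree structure keeps the diagonals of ${}^{\rho\cdot\rho}V$ block-diagonal across vertices is a useful piece of bookkeeping that the paper leaves implicit.
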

\begin{proof}
For each vertex $i$ in $T$, we need to check that $(W,W'_i)$ is a $B_i$-pair. It can be proved in exactly the same way as it is shown in the proof of Prop. \ref{g1astg2} that $(W,W'_B)$ is a $B$-pair.
\end{proof}

We postpone stating the conclusion about the theory $\gtcf$ given by Theorem \ref{algproofgen} till we consider the general case in Section \ref{sechnngen}.

\subsection{HNN-extensions}\label{sechnn}
We start from a special case which will serve our intuitions in a similar way as Example \ref{simpleamal} did in Section \ref{secampr}. Let $C_2\times C_2=\{1,\sigma,\tau,\gamma\}$ and consider the following HNN-extension:
$$\alpha:\{1,\sigma\}\cong \{1,\tau\},\ \ \ G:=(C_2\times C_2)\ast_{\alpha}.$$
Then the crucial relation defining $G$ is $\sigma t=t\tau$. We take:
$$\rho:=(1,\sigma,\tau,\gamma,t,t\sigma,t\tau,t\gamma),$$
$$\rho\rho=\left[\begin{array}{cccc|cccc}
1             & \sigma        & \tau          & \gamma   & t    & t\sigma   & t\tau     & t\gamma   \\
\sigma       &   1       & \gamma             & \tau   & t\tau       & t\gamma  & t       & t\sigma   \\
\hline
\tau         &   \gamma         & 1        & \sigma    &  \tau t       & \tau t\sigma   & \tau t \tau            & \tau t \gamma   \\
 \gamma      &   \tau         & \sigma          & 1   &    \tau t \tau     &  \tau t \gamma   & \tau t          & \tau t\sigma    \\
\hline
\hline
t           & t\sigma    & t\tau     & t\gamma   & t^2      & t^2\sigma   & t^2\tau     & t^2\gamma \\
t\sigma   &   t      & t\gamma         & t\tau      &     t^2\tau   &  t^2\gamma  & t^2        & t^2\sigma \\
\hline
t\tau       &   t\gamma     & t         & t\sigma      &  t\tau t      & t\tau t\sigma   & t\tau t \tau          & t\tau t \gamma  \\
t\gamma      &  t \tau     & t\sigma             & t      &   t \tau t \tau    & t \tau t \gamma   & t\tau t         & t\tau t\sigma
\end{array}\right].$$
We check below some easy calculations related to the entries of the matrix above:
$$\sigma(t\tau)=t\tau\tau=t,\ \ \ \ \ \ \gamma(t\sigma)=\tau\sigma t\sigma=\tau t\tau\sigma=\tau t\gamma.$$
For $W\subseteq {}^{\rho}V$ and a subsequence $\rho'\subseteq \rho$, we define the following coordinate projection and the corresponding image of $W$:
$$\pi_{\rho'}:{}^{\rho}V\to {}^{\rho'}V,\ \ \ W_{\rho'}:=\pi_{\rho'}(W).$$
We fix the following sequences:
$$\rho_0:=(1,\sigma,\tau,\gamma),\ \ \ t\rho_0:=(t,t\sigma ,t\tau t,t\gamma ),$$
and we define below our crucial notion in this special case.
\begin{definition}\label{c2c2}
$(V,W)$ is a \emph{$(C_2\times C_2)\ast_{\alpha}$-pair}, if:
\begin{enumerate}
\item ${}^{t}W_{\rho_0}=W_{t\rho_0}$.

\item $(V,W_{\rho_0})$ is a $(C_2\times C_2)$-pair;
\end{enumerate}
\end{definition}
\begin{remark}\label{imply}
The item $(1)$ means that $(W_{\rho_0},W)$ is a $\Zz$-pair. The items $(1)$ and $(2)$ \emph{imply} that all the projections $W\to {}^{\rho_i}V$ are dominant, since for $i\in \rho_0$ the dominance follows from the item $(2)$ and for $i\in t\rho_0$ it follows from the item $(1)$.
\end{remark}
To prove the $G$-Prolongation Lemma in this context, the following observation is useful:
$${}^{\sigma}({}^{\rho}V)={}^{\lambda(\rho)}V,$$
where
$$\lambda:=(12)(34)(57)(68)\in S_8$$
is the appropriate permutation, which is applied to the $8$-tuple $\rho$ in the obvious way. We also consider $\lambda$ as the following permutation isomorphism $\lambda_V$ (defined using the $\lambda_V^i$-maps from Section \ref{n0} for $G_0=C_2\times C_2$):
$$\lambda_V:=\lambda^{\sigma}_V\times {}^{t}\left(\lambda^{\tau}_{V}\right):{}^{\rho}V={}^{\rho_0}V\times {}^{t\rho_0}V
\to {}^{\sigma \rho_0}V\times {}^{t\tau \rho_0}V={}^{\sigma\rho}V.$$
Suppose $(V,W)$ is a $G$-pair and $W':={}^{\rho}W\cap {}^{\rho\cdot\rho}V$. To have a convenient description
of $W'_{\rho_0}$, we define:
\\
$$\widetilde{\lambda}_V:=\left(\id_V,\lambda_V\right):{}^{\rho}V\to {}^{\rho}V\times {}^{\sigma}{}^{\rho}V.$$
Analyzing the above matrix of $\rho\rho$, it is easy to see that:
$$W'_{\rho_0}=\widetilde{\lambda}_V(W)\times_{({}^{\rho_0}V\times {}^{\rho_0}V)}\widetilde{\lambda}_{{}^{\tau}V}({}^{\tau}W),$$
where the morphisms
$$\widetilde{\lambda}_V(W)\to {}^{\rho_0}V\times {}^{\rho_0}V,\ \ \ \widetilde{\lambda}_{{}^{\tau}V}({}^{\tau}W)\to {}^{\rho_0}V\times {}^{\rho_0}V,$$
which we need to define the fiber product above, come from the usual projection map composed with appropriate coordinate permutations.
From this description of $W'_{\rho_0}$, the $G$-Prolongation Lemma follows, as we will see soon in the general case of an HNN-extension of a finite group.
\\
\\
Let us consider now the general situation, where $G_0=\{g_1=1,\ldots,g_e\}$ is finite and $\alpha:A\to B$ is an isomorphism of subgroups of $G_0$. Let
$$G:=G_0\ast_{\alpha},\ \ \ \rho:=(g_1,\ldots,g_e,tg_1,\ldots,tg_e),\ \ \ \rho_0:=(g_1,\ldots,g_e).$$
For each $\sigma\in A$, we have the following equality in $G$:
$$\sigma t=t\alpha(\sigma).$$
Let us fix a $G$-field $(K,\rho)$ and a pair of varieties $(V,W)$ such that $W\subseteq {}^{\rho}V$. For any $g\in G_0$, let
$$\lambda^g_V:{}^{\rho_0}V\to {}^{g\rho_0}V$$
be the obvious permutation map (appearing in Section \ref{n0}), where $\rho_0$ is the subsequence of $\rho$ corresponding to $G_0$.
For any $\sigma \in A$, we also define (similarly as in the case of $G=(C_2\times C_2)\ast_{\alpha}$)
$$\widetilde{\lambda}^{\sigma }_V:{}^{\rho}V\to {}^{\sigma \rho}V,\ \ \ \
\widetilde{\lambda}^{\sigma}_V:=\lambda^{\sigma}_V\times {}^{t}\left(\lambda^{\alpha(\sigma)}_{V}\right).$$
We aim to find a good notion of a $G$-pair using the above choice of the sequence of generators $\rho$. We define the coordinate projections similarly as in the case of $G=(C_2\times C_2)\ast_{\alpha}$.
\begin{definition}\label{defhnn}
$(V,W)$ is a \emph{$G$-pair}, if:
\begin{enumerate}
\item ${}^{t}W_{\rho_0}=W_{t\rho_0}$;

\item $(V,W_{\rho_0})$ is a $G_0$-pair;
\end{enumerate}
\end{definition}
\begin{remark}
Similarly as in Remark \ref{imply}, the conditions $(1)$ and $(2)$ imply that all the projections $W\to {}^{\rho_i}V$ are dominant.
\end{remark}
\begin{example}\label{hnnex}
We check some special cases of HNN-extensions.
\begin{enumerate}
\item We have
$$\{1\}\ast_{\{1\}}\cong \Zz,\ \ \ \rho=(1,t)$$
and the axioms for $\{1\}\ast_{\{1\}}-\tcf$ coincide with the classical axioms for ACFA.

\item We have
$$G\ast_{\{1\}}\cong G\ast \Zz,\ \ \ \rho=(g_1,\ldots,g_e,tg_1,\ldots,tg_e).$$
The choice of $\rho$ here is ``less economical'' than the choice from Section \ref{secampr}, where we had $\rho=(g_1,\ldots,g_e,t)$.
Hence the axioms for $G\ast_{\{1\}}-\tcf$ have a slightly different form than the equivalent axioms for $(G\ast \Zz)-\tcf$ from Section \ref{secampr}.

\item We have
$$G\ast_{\id_G}\cong G\times \Zz,\ \ \  \rho=(g_1,\ldots,g_e,tg_1,\ldots,tg_e)$$
and the axioms for $G\ast_{\id_G}-\tcf$ are implied by the axioms for $(G\times \Zz)-\tcf$ from Section \ref{secgen1}. (Obviously, since both the theories have the same models, both sets of axioms are equivalent.)
\end{enumerate}
\end{example}
We recall that we have fixed a $G$-field $(K,\rho)$ and a pair of varieties $(V,W)$ such that $W\subseteq {}^{\rho}V$. We also define as usual:
$$W':={}^{\rho}W\cap {}^{\rho\cdot\rho}V.$$
\begin{prop}[$G$-Prolongation Lemma for HNN-extensions]\label{prolnhh}
If $(V,W)$ is a $G$-pair, then $(W,W')$ is a $G$-pair.
\end{prop}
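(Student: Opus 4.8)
The plan is to verify, for the pair $(W,W')$, the two defining conditions of Definition \ref{defhnn}: that $(W,W'_{\rho_0})$ is a $G_0$-pair, and that ${}^{t}W'_{\rho_0}=W'_{t\rho_0}$. Since $W'\subseteq {}^{\rho}W$, projecting onto the blocks of the generating tuple indexed by $\rho_0$ and by $t\rho_0$ immediately gives the inclusions $W'_{\rho_0}\subseteq {}^{\rho_0}W$ and $W'_{t\rho_0}\subseteq {}^{t\rho_0}W$; both conditions are therefore really statements about how the diagonal identifications defining ${}^{\rho\cdot\rho}V$ constrain these two projections of $W'$.

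The heart of the argument is an explicit fiber-product description of $W'_{\rho_0}$, generalizing the formula
$$W'_{\rho_0}=\widetilde{\lambda}_V(W)\times_{({}^{\rho_0}V\times {}^{\rho_0}V)}\widetilde{\lambda}_{{}^{\tau}V}({}^{\tau}W)$$
established in the $(C_2\times C_2)\ast_{\alpha}$ example. Concretely, I would read this description off by inspecting the first $e$ rows of the matrix $\rho\rho$: the top-left $e\times e$ block is the multiplication table of $G_0$ and yields the $G_0$-diagonals, whereas the top-right block consists of the words $g_i t g_j$, which — exactly when $g_i\in A$, through the HNN relation $g_i t=t\alpha(g_i)$ — produce diagonal identifications linking the $\rho_0$-coordinates back to the ${}^{t}$-translated rows of $W$. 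Assembling these, one obtains $W'_{\rho_0}$ as a fiber product, over a base that is a product of copies of ${}^{\rho_0}V$, of graphs of the maps $\widetilde{\lambda}^{\sigma}_V$ for $\sigma\in A$ applied to $W$ and its relevant $G_0$-translates, with all structure maps dominant. From this, dominance of each projection $W'_{\rho_0}\to {}^{g_i}W$ is immediate by Fact \ref{genfact}, and the $G_0$-iterativity condition $\lambda^{g_i}_W(W'_{\rho_0})={}^{g_i}W'_{\rho_0}$ follows by transporting $\lambda^{g_i}_W$ through the fiber product, exactly as the equality $\lambda^{b}_W(W'_B)={}^{b}W'_B$ was obtained in the proof of Proposition \ref{g1astg2}, together with the moreover part of Lemma \ref{finiteit} applied to the $G_0$-pair $(V,W_{\rho_0})$. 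This establishes that $(W,W'_{\rho_0})$ is a $G_0$-pair.

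For the remaining condition, note that left multiplication of $\rho$ by $t$ sends $g_i\mapsto tg_i$ and is realized on varieties by the functor ${}^{t}(-)$, so the $t\rho_0$-rows of $\rho\rho$ are precisely the ${}^{t}$-shifts of its $\rho_0$-rows, the relations $\sigma t=t\alpha(\sigma)$ for $\sigma\in A$ being what guarantee that a $\rho_0$-diagonal shifts to a $t\rho_0$-diagonal. Combining this symmetry with the hypothesis ${}^{t}W_{\rho_0}=W_{t\rho_0}$ (the first condition of Definition \ref{defhnn} for $(V,W)$) then propagates one level up to ${}^{t}W'_{\rho_0}=W'_{t\rho_0}$. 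The step I expect to be the main obstacle is the bookkeeping of the second paragraph: correctly determining which of the $2e$ rows of $\rho\rho$ contribute constraints to the $\rho_0$-projection, and checking that the HNN relations produce exactly the claimed fiber-product base, since it is there that the interaction between the finite group $G_0$ and the stable letter $t$ is genuinely used. Once the description of $W'_{\rho_0}$ is pinned down, both the dominance and the iterativity assertions become formal consequences of Fact \ref{genfact} and Lemma \ref{finiteit}, just as in the amalgamated-product case.
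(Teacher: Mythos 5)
Your proposal matches the paper's proof in all essentials: the same fiber-product description of $W'_{\rho_0}$ (indexed by coset representatives of $G_0/A$, with factors given by the graphs $\widetilde{\lambda}_{{}^{\tau_i}V}({}^{\tau_i}W)$ of the maps built from $\lambda^{\sigma}_V\times{}^{t}(\lambda^{\alpha(\sigma)}_V)$ for $\sigma\in A$), dominance via Fact \ref{genfact}, the iterativity condition by transporting $\lambda^{g}_W$ through the fiber product as in Proposition \ref{g1astg2}, and condition (1) from the observation that the $t\rho_0$-rows of $\rho\rho$ are the ${}^{t}$-shifts of the $\rho_0$-rows. The proof is correct and takes essentially the same route as the paper.
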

\begin{proof}
We start from a convenient description of the projections $W'_{\rho_0},W'_{t{\rho_0}}$. Let $A=\{1=\sigma_1,\sigma_2,\ldots,\sigma_m\}$.
We define first:
$$\widetilde{\lambda}_V:=\left(\id_V,\widetilde{\lambda}^{\sigma_2}_V,\ldots,\widetilde{\lambda}^{\sigma_m}_V\right):{}^{\rho}V\to {}^{A}\left({}^{\rho}V\right)
={}^{\rho}V\times {}^{\sigma_2\rho}V\times \ldots \times{}^{\sigma_m\rho}V.$$
Let $\{1,\tau_2,\ldots,\tau_k\}$ be a set of representatives of the cosets in $G_0/A$. The corresponding matrix $\rho\rho$ here has a similar shape as the one in the case of $G=(C_2\times C_2)\ast_{\alpha}$ (considered in the beginning of this subsection), and we get similar conclusions:
$$W'_{\rho_0}=\widetilde{\lambda}_V(W)\times_{({}^{{\rho_0}}V\times {}^{{\rho_0}}V)}\widetilde{\lambda}_{{}^{\tau_2}V}({}^{\tau_2}W)\times_{({}^{{\rho_0}}V\times {}^{{\rho_0}}V)} \ldots \times_{({}^{{\rho_0}}V\times {}^{{\rho_0}}V)}\widetilde{\lambda}_{{}^{\tau_k}V}({}^{\tau_k}W),$$
$$W'_{t{\rho_0}}=\widetilde{\lambda}_{{}^{t}V}({}^{t}W)\times_{({}^{t{\rho_0}}V\times {}^{t{\rho_0}}V)}\widetilde{\lambda}_{{}^{t\tau_2}V}({}^{t\tau_2}W)\times_{({}^{{t\rho_0}}V\times {}^{{t\rho_0}}V)} \ldots \times_{({}^{{t\rho_0}}V\times {}^{{t\rho_0}}V)}\widetilde{\lambda}_{{}^{t\tau_k}V}({}^{t\tau_k}W),$$
where (as in the case of $G=(C_2\times C_2)\ast_{\alpha}$ discussed above) the morphisms
$$\widetilde{\lambda}_V(W)\to {}^{\rho_0}V\times {}^{\rho_0}V,\ \ \ \widetilde{\lambda}_{{}^{\tau_i}V}({}^{\tau_i}W)\to {}^{\rho_0}V\times {}^{\rho_0}V$$
come from the projection map composed with appropriate coordinate permutations.

Hence, we immediately get that
$${}^{t}W'_{{\rho_0}}=W'_{t{\rho_0}},$$
so the condition $(1)$ from the definition of a $G$-pair holds.

For the condition $(2)$, we take $g\in G_0$ and calculate:
\begin{IEEEeqnarray*}{rCl}
{}^{g}\left(W'_{{\rho_0}}\right) &=& {}^{g}\left(\widetilde{\lambda}_V(W)\times_{({}^{{\rho_0}}V\times {}^{{\rho_0}}V)}\widetilde{\lambda}_{{}^{\tau_2}V}({}^{\tau_2}W)
\times_{({}^{{\rho_0}}V\times {}^{{\rho_0}}V)} \ldots \times_{({}^{{\rho_0}}V\times {}^{{\rho_0}}V)}\widetilde{\lambda}_{{}^{\tau_k}V}({}^{\tau_k}W)\right)\\
 &=& \widetilde{\lambda}_{{}^{g}V}({}^{g}W)\times_{({}^{g {\rho_0}}V\times {}^{g {\rho_0}}V)}\widetilde{\lambda}_{{}^{g\tau_2}V}({}^{g \tau_2}W)\times_{({}^{g{\rho_0}}V\times {}^{g{\rho_0}}V)}\ldots \times_{({}^{g {\rho_0}}V\times {}^{g {\rho_0}}V)}\widetilde{\lambda}_{{}^{g\tau_k}V}({}^{g \tau_k}W)\\
& = & \lambda_W^{g}\left(\widetilde{\lambda}_V(W)\times_{({}^{{\rho_0}}V\times {}^{{\rho_0}}V)}\widetilde{\lambda}_{{}^{\tau_2}V}({}^{\tau_2}W)
\times_{({}^{{\rho_0}}V\times {}^{{\rho_0}}V)} \ldots \times_{({}^{{\rho_0}}V\times {}^{{\rho_0}}V)}\widetilde{\lambda}_{{}^{\tau_k}V}({}^{\tau_k}W)\right)\\
& = & \lambda_W^{g}\left(W_{\rho_0}'\right),
\end{IEEEeqnarray*}
which is what we wanted.
\end{proof}
Again, we postpone stating the conclusions about the theory $\gtcf$ till Section \ref{sechnngen}.

\subsection{General case}\label{sechnngen}
To cover the general case of an arbitrary (finitely generated) virtually free group, we will use the Bass-Serre theory concerning groups acting by automorphisms on  trees. We will not explain the notion of the \emph{fundamental group of a graph of groups} appearing in Theorem \ref{bs1}, since it is immediately clarified in Theorem \ref{bs2} using the notions which have been introduced already.
\begin{theorem}[p. 104, Corollary IV.1.9 in \cite{Dicks}]\label{bs1}
A group is virtually free if and only if, it is the fundamental group of a finite graph of finite groups.
\end{theorem}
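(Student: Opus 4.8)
The plan is to prove both implications via Bass-Serre theory, disposing of the easier ``only if'' direction first and then treating the harder ``if'' direction through Stallings' ends theorem together with Dunwoody's accessibility.

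First, suppose $G=\pi_1(B(-),T)$ is the fundamental group of a finite graph of finite groups. By the fundamental theorem of Bass-Serre theory, $G$ acts on its Bass-Serre tree $\widetilde{T}$ with vertex and edge stabilizers conjugate to the (finite) vertex and edge groups, and with $\widetilde{T}/G$ isomorphic to the finite graph $T$. Every element of finite order in $G$ is elliptic --- a hyperbolic isometry of a tree has infinite order --- so each torsion element fixes a vertex and hence lies in a conjugate of one of the finitely many finite vertex groups. I would then produce a torsion-free normal subgroup $H\trianglelefteq G$ of finite index: it suffices to find a homomorphism from $G$ to a finite group which is injective on each vertex group, since its kernel then meets every conjugate of a vertex group trivially and is therefore torsion-free. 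Such a subgroup acts freely on $\widetilde{T}$ (a nontrivial stabilizer would contain a torsion element), and a group acting freely on a tree is free; thus $H$ is free of finite index and $G$ is virtually free.

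For the converse, I would invoke the Karrass--Pietrowski--Solitar analysis. Assume $G$ is finitely generated and contains a free subgroup of finite index; replacing it by the intersection of its finitely many conjugates, we may take a free \emph{normal} subgroup $F\trianglelefteq G$ of finite index, and $F$ is finitely generated. Since finite-rank free groups are finitely presented and a finite extension of a finitely presented group is finitely presented, $G$ is finitely presented. If $G$ is finite there is nothing to prove; otherwise $G$ has more than one end, so Stallings' ends theorem splits $G$ as an amalgamated product or HNN-extension over a finite subgroup. Iterating, Dunwoody's accessibility theorem guarantees that this splitting process terminates, exhibiting $G$ as the fundamental group of a finite graph of groups whose edge groups are finite and whose vertex groups are finite or one-ended. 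A vertex group, being a subgroup of $G$, is itself finitely generated and virtually free; since an infinite finitely generated virtually free group has more than one end, no vertex group can be one-ended, and therefore every vertex group is finite. Hence $G$ is the fundamental group of a finite graph of finite groups.

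The main obstacle is the accessibility input in the converse: without a bound on the complexity of iterated splittings over finite subgroups, the Stallings decomposition need not terminate, and it is precisely Dunwoody's theorem (applicable here because $G$ is finitely presented) that closes this gap. A secondary technical point, in the forward direction, is the construction of the torsion-free finite-index subgroup; I would obtain the required finite quotient separating the torsion either from the residual finiteness of fundamental groups of finite graphs of finite groups or, to avoid circularity, by directly amalgamating the finitely many vertex-group actions into a single finite permutation representation of $G$.
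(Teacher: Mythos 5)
The paper offers no proof of this statement: it is quoted verbatim as Corollary IV.1.9 of Dicks--Dunwoody and used as a black box, so there is nothing internal to compare your argument against; what you have written is a correct rendition of the standard proof. In the forward direction, reducing to the existence of a finite quotient of the fundamental group that is injective on each vertex group --- whose kernel is then torsion-free (every torsion element is elliptic on the Bass--Serre tree, hence conjugate into a vertex group), acts freely on the tree, and is therefore free of finite index --- is exactly the classical argument from Serre's \emph{Trees}, and the finite permutation representation you allude to is the standard way to get that quotient without a circular appeal to residual finiteness. In the converse you take the modern route: Stallings' ends theorem iterated under Dunwoody's accessibility (legitimately available, since a finite extension of a finitely generated free group is finitely presented), with the one-ended alternative for vertex groups excluded because every finitely generated subgroup of a virtually free group is again virtually free and so, if infinite, has at least two ends. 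The original Karrass--Pietrowski--Solitar proof instead runs a direct induction on the index of the free subgroup and predates Dunwoody's theorem, but both routes are standard and yours is sound. One small caveat worth recording: as literally stated the equivalence requires ``finitely generated'' on the virtually free side (the free group $F_{\omega}$ of infinite rank is virtually free yet is not the fundamental group of any finite graph of finite groups, whose fundamental group is always finitely generated); you silently and correctly insert this hypothesis in the converse, and the paper only ever applies the theorem to finitely generated groups.
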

\begin{theorem}[p. 14, Example I.3.5(vi) in \cite{Dicks}]\label{bs2}
The fundamental group of a graph of groups can be obtained by successively performing:
\begin{itemize}
\item one free product with amalgamation for each edge in the maximal subtree;

\item and then one HNN extension for each edge not in the maximal subtree.
\end{itemize}
\end{theorem}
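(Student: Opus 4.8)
The plan is to start from the standard presentation of the fundamental group of a graph of groups $(\mathcal{G},Y)$ relative to a fixed maximal (spanning) subtree $T$. Writing $G_v$ for the vertex groups, $G_e$ for the edge groups, and $\alpha_e\colon G_e\to G_{o(e)}$, $\omega_e\colon G_e\to G_{t(e)}$ for the two boundary monomorphisms of an oriented edge $e$, the group $\pi_1(\mathcal{G},Y,T)$ is generated by the disjoint union of the $G_v$ together with one stable letter $t_e$ per geometric edge, subject to the relations internal to each $G_v$, the edge relations $t_e^{-1}\alpha_e(g)t_e=\omega_e(g)$ for $g\in G_e$, and the normalization $t_e=1$ for every $e\in T$. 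The whole statement then reduces to reorganizing this single presentation as an iterated amalgamation followed by an iterated HNN extension.

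First I would build the tree part. Order the edges $e_1,\dots,e_r$ of $T$ so that each $e_i$ attaches a vertex $v_i$ not among the endpoints of $e_1,\dots,e_{i-1}$ to the connected subtree $T_{i-1}$ already spanned; this is possible because $T$ is a finite tree and can be grown one leaf at a time. Starting from a single vertex group $G_{v_0}$, adding $e_i$ produces exactly the amalgamated free product $\pi_1(\mathcal{G},T_i)=\pi_1(\mathcal{G},T_{i-1})\ast_{G_{e_i}}G_{v_i}$, where $G_{e_i}$ is identified with its $\alpha_{e_i}$-image in the already-built group and with its $\omega_{e_i}$-image in $G_{v_i}$. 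Indeed, since $t_{e_i}=1$, the edge relation collapses to $\alpha_{e_i}(g)=\omega_{e_i}(g)$, which is precisely the amalgamating identification; and because $v_i$ is a new vertex, the generators $G_{v_i}$ interact with those already present \emph{only} through this one edge group, so the presentation splits as the asserted amalgamated product. Iterating over $i=1,\dots,r$ yields the tree product $\pi_1(\mathcal{G},T)$ as one free product with amalgamation per edge of $T$.

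Next I would add the non-tree edges. Each $e\notin T$ still carries its stable letter $t_e$ and the relation $t_e^{-1}\alpha_e(g)t_e=\omega_e(g)$, but now \emph{both} $\alpha_e(G_e)\le G_{o(e)}$ and $\omega_e(G_e)\le G_{t(e)}$ already sit inside the tree product $\pi_1(\mathcal{G},T)$, and $\omega_e\circ\alpha_e^{-1}$ is an isomorphism between them. Adjoining $t_e$ subject to exactly this relation is, by definition, an HNN extension of the current group with associated subgroups $\alpha_e(G_e),\omega_e(G_e)$ and stable letter $t_e$. Performing this one edge at a time, for all $e\notin T$, realizes $\pi_1(\mathcal{G},Y,T)$ as the tree product followed by one HNN extension per non-tree edge, which is the claim.

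The step I expect to be the genuine obstacle is verifying that each stage is \emph{honest} — that the vertex and edge groups embed as claimed and that no unexpected collapsing occurs, so that the abstract presentation genuinely factors through the amalgamated-product and HNN presentations rather than merely surjecting onto them. This is exactly the content of the Bass--Serre structure theorem, proved via the normal form theorems for amalgamated products and HNN extensions, or equivalently via the action of $\pi_1(\mathcal{G},Y,T)$ on the associated Bass--Serre tree, where a single edge orbit with segment quotient yields an amalgam and with loop quotient yields an HNN extension. Granting that input, the reorganization of the presentation above is routine; the real work is the injectivity/normal-form machinery, which is why we cite \cite{Dicks} rather than reprove it here.
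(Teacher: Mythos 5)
Your argument is correct and is exactly the standard one underlying the cited statement: the paper itself offers no proof of Theorem \ref{bs2}, quoting it directly from Example I.3.5(vi) of \cite{Dicks}, and your reorganization of the presentation of $\pi_1(\mathcal{G},Y,T)$ --- growing the maximal tree one leaf at a time to get the iterated amalgams, then adjoining the stable letters of the non-tree edges as HNN extensions --- is precisely how that example is established. You also correctly isolate the only nontrivial input, namely the injectivity of the vertex and edge groups at each stage, which is the Bass--Serre structure theorem and is appropriately deferred to the reference.
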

Let $G$ be a finitely generated, virtually free group. By Theorem \ref{bs1}, $G$ may be represented as the fundamental group of a finite graph of finite groups, i.e.
$$G=\pi_1(G(-),Y).$$
By Theorem \ref{bs2}, there is a maximal subtree $T$ of $Y$ such that $G$ comes from a sequence of HNN-extensions of $\pi_1(G(-),T)$ and we know that
$\pi_1(G(-),T)$ is the amalgamated free product along the tree $T$ as in Section \ref{secampr}. Let $VY$ denote the set of vertices of $Y$, $EY$ the set of edges of $Y$ and we set $E:=EY\setminus ET$. Then each HNN-extension, which we need to obtain the group $G$ from the group $\pi_1(G(-),T)$, comes from an edge $e\in E$.

For any $i\in VY$, let $G_i$ denote the corresponding vertex groups and for any $e\in E$ let
$$\alpha_e:A_{\overline{\iota}e}\cong A_{\overline{\tau}e}$$
be the corresponding edge group isomorphism, where $\overline{\iota}e$ (resp. $\overline{\tau}e$) is the initial (resp. terminal)  vertex of the edge $e$,  $A_{\overline{\iota}e}\leqslant G_{\overline{\iota}e}$ and $A_{\overline{\tau}e}\leqslant G_{\overline{\tau}e}$.

Let $\rho_T$ be the appropriate amalgamated union of the underlying sets of the vertex groups of $T$ (so also of $Y$) as in Section \ref{secampr}.
For any $e\in E$, we introduce a new variable $t_e$.
We define below our fixed sequence of generators of $G$:
$$\rho:=\rho_T\cup \bigcup_{e\in E}t_e\rho_T.$$
Note that (using the definition of the fundamental group of a graph of groups) Assumption \ref{assume} is satisfied for the marked group $(G,\rho)$.

For any $i\in VY$, let $\rho^i$ denote the subsequence of $\rho_T$ corresponding to $G_i$. Assume now we have a $G$-field $(K,\rho)$. We take a pair of varieties $(V,W)$ such that $W\subseteq {}^{\rho}V$. For any $i\in VY,e\in E$, we denote by $W_i$ the (Zariski closure of the) projection of $W$ to ${}^{\rho_{i}}V$, and by $W_{e,i}$ the (Zariski closure of the) projection of $W$ to ${}^{t_e\rho_{i}}V$. For any subsequence $\rho'\subseteq \rho$, we sometimes also denote by $W_{\rho'}$ the (Zariski closure of the) projection of $W$ to ${}^{\rho'}V$.

To find the right definition of a $G$-pair in this general context, we consider below one relatively easy example which does not fit into the classes of groups considered in Sections \ref{secampr}, \ref{sechnn}.
\begin{example}\label{exgen}
We define the graph $Y$ in the following way:
$$VY=\{1,2\},\ EY=\{e_1,e\},\ \overline{\iota}e_1=1,\ \overline{\tau}e_1=2,\ \overline{\iota}e=1,\ \overline{\tau}e=2.$$
So, $Y$ consists of two vertices and two edges between these vertices (going into the same direction). Let $T$ be a maximal tree with the edge $e_1$, so $E=\{e\}$. We put the structure of a graph of groups on $Y$ as follows:
$$G_1=C_4=\{1,\alpha,\beta,\gamma\},\ \ \ G_2=C_4=\{1,\alpha',\beta',\gamma'\},\ \ \ A_{e_1}=\{1\},\ \ \ A_{e}=C_2,$$
and the isomorphism $\alpha_e$, corresponding to the edge $e$, maps $\beta$ to $\beta'$. Hence we have:
$$\pi_1(G(-),Y)=\left(C_4\ast C_4\right)\ast_{\alpha_e},$$
where the crucial relation is $\beta t=t\beta'$ (to ease the notation, we write $t$ for $t_e$ in this example). We obtain the following sequences:
$$\rho_T=(1,\alpha,\beta,\gamma,\alpha',\beta',\gamma'),\ \ \ \ \ \ \rho=(1,\alpha,\beta,\gamma,\alpha',\beta',\gamma',t,t\alpha,t\beta,t\gamma,t\alpha',t\beta',t\gamma').$$
We need to calculate the matrix $\rho\rho$ which will serve as just one, and relatively small, block of the potentially huge matrix $\rho\rho$, we need to deal with in the general case of a virtually free group $G$.
$$\left[\begin{array}{c|c|c|c|c}
(1,\alpha,\beta,\gamma) & (\alpha',\beta',\gamma') & t   & t(\alpha,\beta,\gamma) & t(\alpha',\beta',\gamma')  \\
(\alpha,\beta,\gamma,1) & \alpha(\alpha',\beta',\gamma') & \alpha t   &\alpha t(\alpha,\beta,\gamma) & \alpha t(\alpha',\beta',\gamma')   \\
(\beta,\gamma,1,\alpha)  & \beta(\alpha',\beta',\gamma') & t\beta'   & t\beta'(\alpha,\beta,\gamma) & t(\gamma',1,\alpha')  \\
(\gamma,1,\alpha,\beta) & \gamma(\alpha',\beta',\gamma') & \alpha t\beta' &\alpha t\beta'(\alpha,\beta,\gamma) &\alpha t(\gamma',1,\alpha') \\
\hline
\alpha'(1,\alpha,\beta,\gamma) & (\beta',\gamma',1) & \alpha' t   &\alpha' t(\alpha,\beta,\gamma) & \alpha' t(\alpha',\beta',\gamma')   \\
\beta'(1,\alpha,\beta,\gamma)  & (\gamma',1,\alpha') & \beta' t   & \beta' t(\alpha,\beta,\gamma) & \beta' t(\alpha',\beta',\gamma') \\
\gamma'(1,\alpha,\beta,\gamma) & (1,\alpha',\beta') & \gamma' t & \gamma' t(\alpha,\beta,\gamma) &\gamma' t(\alpha',\beta',\gamma')\\
\hline
t(1,\alpha,\beta,\gamma) & t(\alpha',\beta',\gamma') & t^2   & t^2(\alpha,\beta,\gamma) & t^2(\alpha',\beta',\gamma')  \\
\hline
t(\alpha,\beta,\gamma,1) & t\alpha(\alpha',\beta',\gamma') & t\alpha t   &t\alpha t(\alpha,\beta,\gamma) & t\alpha t(\alpha',\beta',\gamma')   \\
t(\beta,\gamma,1,\alpha)  & t\beta(\alpha',\beta',\gamma') & t^2\beta'   & t^2\beta'(\alpha,\beta,\gamma) & t^2(\gamma',1,\alpha')  \\
t(\gamma,1,\alpha,\beta) & t\gamma(\alpha',\beta',\gamma') & t\alpha t\beta' &t\alpha t\beta'(\alpha,\beta,\gamma) &t\alpha t(\gamma',1,\alpha') \\
\hline
t\alpha'(1,\alpha,\beta,\gamma) & t(\beta',\gamma',1) & t\alpha' t   &t\alpha' t(\alpha,\beta,\gamma) & t\alpha' t(\alpha',\beta',\gamma')   \\
t\beta'(1,\alpha,\beta,\gamma)  & t(\gamma',1,\alpha') & t\beta' t   & t\beta' t(\alpha,\beta,\gamma) & t\beta' t(\alpha',\beta',\gamma') \\
t\gamma'(1,\alpha,\beta,\gamma) & t(1,\alpha',\beta') & t\gamma' t & t\gamma' t(\alpha,\beta,\gamma) &t\gamma' t(\alpha',\beta',\gamma')
\end{array}\right]$$
The definition of a $(C_4\ast C_4)\ast_{\alpha_e}$-pair below combines Definitions \ref{def1am}, \ref{c2c2}.
\begin{definition}\label{defhnn}
$(V,W)$ is a \emph{$(C_4\ast C_4)\ast_{\alpha_e}$-pair}, if:
\begin{enumerate}
\item ${}^{t}W_{\rho_T}=W_{t\rho_T}$;

\item $(V,W_{\rho_T})$ is a $(C_4\ast C_4)$-pair (in the sense of Definition \ref{def1am});

\end{enumerate}
\end{definition}

\end{example}
%
Using the intuitions coming from Example \ref{exgen} and Definition \ref{defhnn}, we give the general definition below.
\begin{definition}
We say that $(V,W)$ is a \emph{$G$-pair}, if:
\begin{enumerate}
\item for all $e\in E$ and $i\in VY$, we have
$${}^{t_{e}}W_{i}=W_{e,i}.$$


\item $(V,W_{\rho_T})$ is a $\pi_1(G(-),T)$-pair (in the sense of Definition \ref{defamgen});

\end{enumerate}
\end{definition}
\begin{remark}
Note that again (as in Remark \ref{imply}), the conditions $(1)$ and $(2)$ \emph{imply} that all the projections $W\to {}^{\rho_i}V$ are dominant.
\end{remark}
As usual, for any pair of varieties $(V,W)$ such that $W\subseteq {}^{\rho}V$, we define
$$W':={}^{\rho}W\cap {}^{\rho\cdot\rho}V.$$
\begin{prop}[$G$-Prolongation Lemma for virtually free $G$]\label{prolmain}
If $G$ is a finitely generated, virtually free group and $(V,W)$ is a $G$-pair, then $(W,W')$ is a $G$-pair.
\end{prop}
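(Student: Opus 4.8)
The plan is to reduce the statement to the two prolongation lemmas already proved in this section: the $\pi_1(G(-),T)$-Prolongation Lemma (Proposition \ref{treeprol}) for the amalgamated-tree part, and the $G$-Prolongation Lemma for HNN-extensions (Proposition \ref{prolnhh}) for the HNN part. The point is that the two defining conditions of a $G$-pair decouple along the Bass--Serre decomposition: condition $(1)$ involves only the HNN relations $\sigma t_e = t_e\alpha_e(\sigma)$ attached to the edges $e\in E$ not in $T$, whereas condition $(2)$ involves only the amalgamation relations inside the maximal subtree $T$. Accordingly, I would verify each condition for $(W,W')$ separately, invoking the appropriate earlier proposition, after first pinning down how the relevant projections of $W'={}^{\rho}W\cap {}^{\rho\cdot\rho}V$ look.

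First I would establish convenient fiber-product descriptions of the projections $W'_{\rho_T}$, $W'_i$ and $W'_{e,i}$, generalizing the formulas obtained inside the proofs of Propositions \ref{treeprol} and \ref{prolnhh}. The mechanism is exactly the block analysis of the matrix $\rho\rho$ illustrated in Example \ref{exgen}: the diagonal block indexed by $\rho_T\times\rho_T$ reproduces verbatim the amalgamated-tree situation, so the computation of $W'_{\rho_T}$ is identical to the one in Proposition \ref{treeprol} (which itself reduces, vertex by vertex, to the finite-group fiber products of Proposition \ref{g1astg2}); and for each edge $e\in E$, the off-diagonal block indexed by $t_e\rho_T$ is obtained from the $\rho_T$-block by applying the coordinate-permutation maps $\widetilde{\lambda}^{\sigma}_V$ induced by the relations $\sigma t_e=t_e\alpha_e(\sigma)$, precisely as in the proof of Proposition \ref{prolnhh}.

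From these descriptions the two conditions follow formally. For condition $(1)$, the equality ${}^{t_e}W'_i=W'_{e,i}$ for all $e\in E$ and $i\in VY$ is read off directly, since the $t_e\rho_T$-block is by construction the image of the $\rho_T$-block under the permutation attached to $e$ together with the action of $t_e$; this is the same computation that yields ${}^{t}W'_{\rho_0}=W'_{t\rho_0}$ in Proposition \ref{prolnhh}. For condition $(2)$, I would check that $(W,W'_{\rho_T})$ is a $\pi_1(G(-),T)$-pair in the sense of Definition \ref{defamgen}, i.e. that for each vertex $i$ the pair $(W,W'_i)$ is a $G_i$-pair; since $(V,W_{\rho_T})$ is a $\pi_1(G(-),T)$-pair by hypothesis, this is exactly the conclusion of the tree argument of Proposition \ref{treeprol}, applied now with $W$ in the role of $V$ and using the dominance of the projection $W\to W_{\rho_T}$.

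The main obstacle I anticipate is the bookkeeping in the first step: verifying that the fiber-product descriptions genuinely hold when several HNN-edges and a nontrivial tree of amalgamations are present \emph{simultaneously}. Concretely, one must check that the full collection of diagonals cut out in ${}^{\rho\cdot\rho}V$ by all the word-problem relations of $(G,\rho)$---both the amalgamation relations within $T$ and the HNN relations across the edges $e\in E$---is exactly the collection producing the stated fiber products, and that relations attached to distinct edges do not interfere with one another or with the tree relations. Once the projections of $W'$ are correctly identified, conditions $(1)$ and $(2)$ are immediate consequences of Propositions \ref{prolnhh} and \ref{treeprol} respectively, with no further work.
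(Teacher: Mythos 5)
Your proposal takes essentially the same route as the paper's own proof: the authors verify condition $(1)$ via the fiber-product description of $W'_i$ and $W'_{e,i}$ exactly as in Proposition \ref{prolnhh}, and condition $(2)$ by the tree argument of Proposition \ref{treeprol} (ultimately Proposition \ref{g1astg2}). The bookkeeping issue you flag about simultaneous HNN edges and amalgamations is precisely the part the paper leaves to the reader, so your plan is a faithful (indeed slightly more explicit) version of their argument.
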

\begin{proof}
The proof of the condition $(1)$ goes in the same way as in the proof of Prop. \ref{prolnhh}. We give few details below. For each $i\in VY$, we have:
$$W'_i=\widetilde{\lambda}_V(W)\times_{\left({}^{\rho^i}V\times {}^{\rho^i}V\right)}\widetilde{\lambda}_{{}^{\tau_2}V}({}^{\tau_2}W)\times_{\left({}^{\rho^i}V\times {}^{\rho^i}V\right)} \ldots
\times_{\left({}^{\rho^i}V\times {}^{\rho^i}V\right)}\widetilde{\lambda}_{{}^{\tau_k}V}({}^{\tau_k}W),$$
where $\{1,\tau_2,\ldots,\tau_k\}$ are representatives of the cosets in $G_i/A_i$ and the natural map $\widetilde{\lambda}_V$ is defined using appropriate coordinate permutations as as in the proof of Prop. \ref{prolnhh}. Using a similar description for the variety $W'_{i,e}$, we show the condition $(1)$ as in the proof of Prop. \ref{prolnhh}.

The proof of the condition $(2)$ goes in the same way as the proof of Prop. \ref{treeprol} (or rather Prop. \ref{g1astg2}), and we leave the details to the reader.
\end{proof}
\begin{prop}\label{secmain}
If $\left(K(a),K(\rho'(a)),K(\rho''(\rho'(a)))\right)$ is a $G$-kernel (as in Definition \ref{kerdefg}), then
$$\left(\locus_K(a),\locus_K(\rho'(a))\right)$$
is a $G$-pair.
\end{prop}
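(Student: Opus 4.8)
The plan is to verify the two defining conditions of a $G$-pair for the pair $(\locus_K(a),\locus_K(\rho'(a)))$, closely following the proof of the finite-group analogue (Lemma \ref{sstrong0}) and combining it with the amalgamated-product and HNN structure encoded in $\rho$. Throughout I would write $V:=\locus_K(a)$ and $W:=\locus_K(\rho'(a))$, and I would use the obvious analogue of Lemma \ref{loci} repeatedly, namely that for any word $w$ in the generators one has ${}^{w}\locus_K(c)=\locus_K(\rho''(w)(\rho'(c)))$ whenever the relevant composition of kernel maps is defined. The first easy observation is that $W\subseteq {}^{\rho}V$, since $\rho'(a)$ is by construction a tuple whose block indexed by $\rho_i$ is $\rho_i'(a)$, and the locus of $\rho_i'(a)$ is ${}^{\rho_i}V$ by Lemma \ref{loci}; dominance of the projections $W\to {}^{\rho_i}V$ then follows exactly as in Lemma \ref{lemma2}.

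For condition $(2)$ — that $(V,W_{\rho_T})$ is a $\pi_1(G(-),T)$-pair — I would restrict attention to the sub-tuple of $\rho'(a)$ indexed by $\rho_T$, i.e. by the vertex groups of the maximal tree, and invoke Definition \ref{defamgen}: it suffices to check that $(V,W_i)$ is a $G_i$-pair for each vertex $i$. But $W_i=\locus_K((g_1'(a),\ldots,g_e'(a)))$ where $g_1,\ldots,g_e$ enumerate $G_i$, and this is precisely the situation of Lemma \ref{sstrong0}. The key computation, reproducing that lemma, is
\begin{IEEEeqnarray*}{rCl}
{}^{g_i}W_i & = & \locus_K\left(g_i''\left(g_1'(a)\right),\ldots,g_i''\left(g_e'(a)\right)\right)\\
 & = & \locus_K\left(\lambda^{g_i}_V\left(g_1'(a),\ldots,g_e'(a)\right)\right)\\
 & = & \lambda^{g_i}_V(W_i),
\end{IEEEeqnarray*}
where the middle equality is exactly the Word Problem relation $g_i''\circ g_j'=g_k''\circ g_l'$ for $(i,j,k,l)\in P$ built into the definition of a $G$-kernel, which guarantees that the ordered tuple $(g_i''(g_j'(a)))_j$ is the coordinate permutation $\lambda^{g_i}_V$ of $(g_j'(a))_j$. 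This establishes the $G_i$-iterativity condition of Lemma \ref{finiteit}(1) for every vertex, hence condition $(2)$.

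For condition $(1)$ — that ${}^{t_e}W_i=W_{e,i}$ for each $e\in E$ and $i\in VY$ — I would again use the analogue of Lemma \ref{loci}. The block $W_{e,i}$ is the locus of the sub-tuple of $\rho'(a)$ indexed by $t_e\rho^i$, i.e. of $(\rho'(t_e g)(a))_{g\in G_i}$; applying the kernel maps one level up, ${}^{t_e}W_i=\locus_K(\rho''(t_e)(\rho'(g)(a)))_{g\in G_i}$, and the HNN relations $\sigma t_e=t_e\alpha_e(\sigma)$ together with the $G$-kernel compatibility $\rho''_w\circ\rho'_v=\rho''_{w'}\circ\rho'_{v'}$ for equal words force these two loci to coincide. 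Here the main subtlety, and the step I expect to be the chief obstacle, is bookkeeping: I must check that the indexing conventions for $\rho$ (the concatenation $\rho_T\cup\bigcup_e t_e\rho_T$) match the coordinate order in which $\locus_K(\rho'(a))$ presents its blocks, and that every word equality I use genuinely lies in the set $P$ of Assumption \ref{assume}, so that the corresponding kernel identity is available. Once the correspondence between group-word relations and locus identities is set up cleanly, conditions $(1)$ and $(2)$ follow by the same pattern as in the finite and HNN cases, and I would leave the routine verifications to the reader, as the paper does elsewhere.
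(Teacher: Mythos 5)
Your proposal is correct and follows exactly the route the paper takes: the paper's own proof is the single sentence ``this can be proved as Lemma \ref{sstrong0} using an obvious analogue of Lemma \ref{loci}'', and your argument is a faithful expansion of that, reducing condition $(2)$ vertex-by-vertex to the computation in Lemma \ref{sstrong0} and deriving condition $(1)$ from the kernel identities attached to relations in $P$. The only slight imprecision is that for condition $(1)$ the relation actually invoked is $t_e\cdot g=(t_eg)\cdot 1\in P$ (available because $t_eg$ is itself a generator in $\rho$), rather than the HNN relation $\sigma t_e=t_e\alpha_e(\sigma)$, which is only needed later for the prolongation lemma.
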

\begin{proof}
This can be proved as Lemma \ref{sstrong0} using an obvious analogue of Lemma \ref{loci}.
\end{proof}
Prop. \ref{prolmain} and Prop. \ref{secmain} give us the assumptions from Theorem \ref{algproofgen}, hence Theorem \ref{algproofgen} provides the main result of this paper.
\begin{theorem}\label{mainthm}
If $G$ is a finitely generated virtually free group, then the theory $\gtcf$ exists. Moreover, the axioms of $\gtcf$ are as in Theorem \ref{algproofgen}.
\end{theorem}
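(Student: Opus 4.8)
The plan is to obtain Theorem \ref{mainthm} as a direct application of the general axiomatization scheme of Theorem \ref{algproofgen}, feeding it the group-theoretic data produced by Bass-Serre theory. First I would apply Theorem \ref{bs1} to present a finitely generated virtually free $G$ as the fundamental group $\pi_1(G(-),Y)$ of a finite graph of finite groups, and then Theorem \ref{bs2} to realize $G$ as an amalgamated free product along a maximal subtree $T$ (as in Section \ref{secampr}), followed by one HNN-extension for each edge in $E=EY\setminus ET$ (as in Section \ref{sechnn}). This fixes the marked group $(G,\rho)$ with $\rho=\rho_T\cup\bigcup_{e\in E}t_e\rho_T$, together with the notion of a $G$-pair defined just before Prop. \ref{prolmain}, which combines the tree condition that $(V,W_{\rho_T})$ be a $\pi_1(G(-),T)$-pair (Definition \ref{defamgen}) with the HNN conditions ${}^{t_e}W_i=W_{e,i}$.

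Next I would verify the two hypotheses of Theorem \ref{algproofgen}. Beforehand one checks that Assumption \ref{assume} holds for $(G,\rho)$, i.e. that the set $P$ of quadruples $(i,j,k,l)$ with $\rho_i\rho_j=\rho_k\rho_l$ encodes a presentation of $G$; this is immediate from the standard presentation of the fundamental group of a graph of groups. The required $G$-Prolongation Lemma is then exactly Prop. \ref{prolmain}, which says that $(W,{}^{\rho}W\cap{}^{\rho\cdot\rho}V)$ is again a $G$-pair whenever $(V,W)$ is, and the analogue of Lemma \ref{lemma2} is exactly Prop. \ref{secmain}, which produces a $G$-pair from the loci attached to a finitely generated $G$-kernel. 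Since both propositions are already established, they supply verbatim assumptions $(1)$ and $(2)$ of Theorem \ref{algproofgen}.

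With these in hand I would invoke Theorem \ref{algproofgen} to conclude that the model companion $\gtcf$ of the theory of $G$-fields exists and is axiomatized by the geometric scheme asserting that for every $G$-pair $(V,W)$ there is $x\in V(K)$ with $\rho(x)\in W(K)$. To justify that this scheme is genuinely first-order, I would appeal to Remark \ref{acfno}: the conditions entering the definition of a $G$-pair --- dominance of the projections, $K$-irreducibility of $V$ and $W$, and the diagonal and iterativity equalities coming from the finite-group and HNN parts --- are all uniformly definable in families, so the axioms form a first-order theory.

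I do not expect a substantial obstacle at this final stage: Theorem \ref{mainthm} is the assembly point of the paper, and the genuine difficulties --- identifying the correct iterativity condition for each finite vertex group, showing it is preserved under the prolongation $W\mapsto{}^{\rho}W\cap{}^{\rho\cdot\rho}V$, and reversing the construction to pass from kernels back to pairs --- have already been discharged in Prop. \ref{prolmain} and Prop. \ref{secmain}. The only point demanding care is bookkeeping: ensuring that the $G$-pair notion assembled from the amalgamation block and the HNN blocks is precisely the one for which those two propositions were proved, and that the Bass-Serre presentation matches the Word-Problem set $P$ of Assumption \ref{assume}.
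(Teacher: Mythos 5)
Your proposal is correct and follows exactly the paper's route: the paper likewise deduces Theorem \ref{mainthm} by noting that Prop. \ref{prolmain} and Prop. \ref{secmain} supply the two hypotheses of Theorem \ref{algproofgen} for the $G$-pair notion built from the Bass-Serre decomposition of Theorems \ref{bs1} and \ref{bs2}, with Assumption \ref{assume} holding by the standard presentation of the fundamental group of a graph of groups. Your additional remarks on first-order definability via Remark \ref{acfno} are consistent with the paper's earlier discussion and do not change the argument.
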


\subsection{Semi-direct products}\label{secgen1}
In this section, we present explicit (i.e. obtained without a usage of the Bass-Serre theory) axioms for the theory $\gtcf$ where $G$ is of the form $G=F_n\rtimes G_0$ for some types of actions of a finite group $G_0$ on the free group $F_n$.

We start from explicit axioms for the theory $(F_n\times G_0)-\tcf$. We present:
$$F_n\times G_0=\langle 1,\sigma_1,\ldots,\sigma_n\rangle \times \{g_1=1,\ldots,g_e\}=\langle \bar{\sigma}\rangle\times \langle \bar{g}\rangle,$$
and for the sequence of generators we take the following:
$$\rho:=\bar{\sigma}\bar{g}=(g_1=1,\sigma_1,\ldots,\sigma_n;g_2,\sigma_1g_2,\ldots,\sigma_ng_2;\ldots;g_e,\sigma_1g_e,\ldots,\sigma_ng_e).$$
Then the crucial product matrix $\rho\rho$ has the shape of the \emph{Kronecker product} (or the \emph{tensor product}) of matrices $\bar{\sigma}\bar{\sigma}$ and $\bar{g}\bar{g}$.
$$\rho\rho=\left[\begin{array}{cccc|cc|cccc}
1          & \sigma_1       & \ldots   & \sigma_n  & \ldots & \ldots & g_e & \sigma_1g_e  & \ldots  & \sigma_ng_e  \\
\sigma_1   & \sigma_1^2     & \ldots  & \sigma_1\sigma_n & \ldots & \ldots & \sigma_1g_e & \sigma_1^2g_e  & \ldots  & \sigma_1\sigma_ng_e  \\
  &    & \ddots  &   & \ddots & \ddots &  &   & \ddots  &   \\
\sigma_n   & \sigma_n\sigma_1     & \ldots   & \sigma_n^2 & \ldots & \ldots & \sigma_ng_e & \sigma_n\sigma_1g_e  & \ldots  & \sigma_n^2g_e  \\
\hline
  &    & \ddots  &   & \ddots & \ddots &  &   & \ddots  &   \\
\hline
  g_e          & \sigma_1g_e      & \ldots & \sigma_ng_e   & \ldots & \ldots & g_e^2 & \sigma_1g_e^2  & \ldots  & \sigma_ng_e^2  \\
\sigma_1g_e   & \sigma_1^2g_e     & \ldots & \sigma_1\sigma_ng_e  & \ldots  & \ldots & \sigma_1g_e^2 & \sigma_1^2g_e^2  & \ldots  & \sigma_1\sigma_ng_e^2  \\
  &    & \ddots  &   & \ddots & \ddots &  &   & \ddots  &   \\
\sigma_ng_e   & \sigma_n\sigma_1g_e     & \ldots & \sigma_n^2g_e &  \ldots  & \ldots & \sigma_ng_e^2 & \sigma_n\sigma_1g_e^2  & \ldots  & \sigma_n^2g_e^2
\end{array}\right].$$
Clearly, the Word Problem for the marked group $(G,\rho)$ is of the right form (i.e. it satisfies Assumption \ref{assume}):
$$\sigma_jg_i=g_i\sigma_j,\ \ \ g_ig_j=g_k.$$
For $W\subseteq {}^{\rho}V$, we need to find the appropriate iterativity conditions. They will come from both the $F_n$-iterativity conditions and the $G_0$-iterativity conditions. We define first the projection maps:
$$\pi_{\bar{\sigma}}:{}^{\rho}V\to {}^{\bar{\sigma}}V,\ \ \pi_{\bar{g}}:{}^{\rho}V\to {}^{{\bar{g}}}V.$$
We define now the auxiliary varieties (everything up to Zariski closure):
$$W_{\bar{\sigma}}:=\pi_{\bar{\sigma}}(W),\ \ \ W_{\bar{g}}:=\pi_{\bar{g}}(W).$$
We say that $(V,W)$ is a \emph{$G$-pair}, if:
\begin{enumerate}
\item $W$ projects dominantly on $V$;

\item $(W_{\bar{g}},W)$ is an $F_n$-pair;

\item $(W_{\bar{\sigma}},W)$ is a $G_0$-pair.
\end{enumerate}
\begin{remark}
\begin{enumerate}
\item If $F_n$ is trivial, then we get the known axioms of $G_0-\tcf$, and if $G_0$ is trivial we get the known axioms of ACFA$_n$.
\item If $G=\Zz\times G_0$, then we are in the situation from Example \ref{hnnex}(3).
\end{enumerate}
\end{remark}
After showing the appropriate prolongation lemma and defining the theory $(F_n\times G_0)-\tcf$ as in Theorem \ref{algproofgen}, we obtain the following.
\begin{theorem}\label{thmprod}
The theory $(F_n\times G_0)-\tcf$ axiomatizes the class of existentially closed $(F_n\times G_0)$-fields.
\end{theorem}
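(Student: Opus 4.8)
The plan is to derive Theorem~\ref{thmprod} as a direct instance of Theorem~\ref{algproofgen}, applied to the notion of a $(F_n\times G_0)$-pair defined just above. Thus it suffices to verify the two hypotheses of that theorem for this notion: the corresponding Prolongation Lemma and the analogue of Lemma~\ref{lemma2}. The organizing observation, already emphasized in the construction of $\rho$, is that the product matrix $\rho\rho$ is the Kronecker product of $\bar\sigma\bar\sigma$ and $\bar g\bar g$; consequently the three defining conditions of a $G$-pair split cleanly into a ``free direction'' carried by the $\bar\sigma$-coordinates and a ``finite direction'' carried by the $\bar g$-coordinates, coupled only through the commutativity relations $\sigma_jg_i=g_i\sigma_j$ from $P$.

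First I would prove the Prolongation Lemma. Let $(V,W)$ be a $(F_n\times G_0)$-pair and set $W':={}^{\rho}W\cap{}^{\rho\cdot\rho}V$, viewed as a candidate pair $(W,W')$ with $W$ in the role of the base variety. Condition~$(1)$, that $W'$ dominates $W$, is immediate from Fact~\ref{genfact}, since $W'$ is carved out of ${}^{\rho}W$ and each factor ${}^{\rho_i}W$ dominates $W$. For condition~$(2)$, that $(W'_{\bar g},W')$ is an $F_n$-pair, I would check that $W'$ sits inside the product of the $\sigma_j$-twists of $W'_{\bar g}$ with dominant projections; this is exactly the content of the free-group analogue of the $\Zz$-prolongation Lemma~\ref{zprol}, and since $F_n$ is free there is no iterativity condition, so the verification reduces to dominance, which again follows from Fact~\ref{genfact}. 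For condition~$(3)$, that $(W'_{\bar\sigma},W')$ is a $G_0$-pair, I would restrict to the $\bar g$-coordinates and invoke the moreover part of Lemma~\ref{finiteit}, the $G_0$-prolongation lemma, which propagates the finite-group iterativity condition from the data of $(V,W)$ to that of $(W,W')$. The coordinate bookkeeping here is entirely parallel to the proofs of Propositions~\ref{prolnhh} and~\ref{treeprol}.

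The second step is the analogue of Lemma~\ref{lemma2}. Given a $(F_n\times G_0)$-kernel $\left(K(a),K(\rho'(a)),K(\rho''(\rho'(a)))\right)$, put $V:=\locus_K(a)$ and $W:=\locus_K(\rho'(a))$. Dominance, condition~$(1)$, follows exactly as in Lemma~\ref{lemma2} and Lemma~\ref{sstrong0}. Condition~$(3)$ follows from Lemma~\ref{sstrong0} applied to the $G_0$-subkernel obtained by remembering only the $\bar g$-maps, and condition~$(2)$ follows from the free-group version of Lemma~\ref{lemma2} applied to the $F_n$-subkernel obtained by remembering only the $\bar\sigma$-maps. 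That these two substructures sit compatibly inside one $G$-kernel is precisely what the commutativity relations in $P$ record, so no further verification is needed.

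The hard part will be the mixed diagonals, those coming from $\sigma_jg_i=g_i\sigma_j$ rather than from within $F_n$ or within $G_0$. Concretely, one must confirm that the fiber-product description of $W'_{\bar g}$ in the free direction and that of $W'_{\bar\sigma}$ in the finite direction are mutually consistent, so that the separate $F_n$- and $G_0$-prolongation arguments can be run on the single variety $W'$ without clashing. The expected resolution is that, because the product is \emph{direct}, the commutativity diagonals impose no constraint beyond those already guaranteed by the two separate iterativities; making this precise through the Kronecker structure of $\rho\rho$ is the technical core, after which Theorem~\ref{algproofgen} yields both the existence of $(F_n\times G_0)-\tcf$ and its stated axioms.
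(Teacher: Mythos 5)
Your proposal is correct and follows essentially the same route as the paper: the paper itself merely remarks that ``after showing the appropriate prolongation lemma and defining the theory $(F_n\times G_0)-\tcf$ as in Theorem~\ref{algproofgen}, we obtain'' the result, which is precisely your plan of verifying the two hypotheses of Theorem~\ref{algproofgen} for the given notion of an $(F_n\times G_0)$-pair, using the Kronecker-product structure of $\rho\rho$ to split the verification into the free ($\bar\sigma$) and finite ($\bar g$) directions handled by Lemma~\ref{zprol} and Lemma~\ref{finiteit} respectively. If anything, your write-up is more explicit than the paper's, and your flagged ``technical core'' (the mixed diagonals $\sigma_j g_i=g_i\sigma_j$) is exactly the part the paper also leaves to the reader.
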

We illustrate the semi-direct product case using the following example:
$$D_{\infty}=\Zz\rtimes C_2=\langle\rho_{\sigma}\rangle \rtimes \langle \rho_{\tau} \rangle:=\langle\sigma^{-1},1,\sigma \rangle \rtimes \langle 1,\tau \rangle,$$
$$\rho=\rho_{\sigma}\rho_{\tau}=(\sigma^{-1},1,\sigma,\sigma^{-1}\tau,\tau,\sigma\tau),$$
$$\rho\rho=\left[\begin{array}{ccc|ccc}
\sigma^{-2}  & \sigma^{-1} & 1 &   \sigma^{-2}\tau  & \sigma^{-1}\tau & \tau   \\
\sigma^{-1}  &  1          & \sigma &    \sigma^{-1}\tau  &  \tau   & \sigma\tau   \\
1            & \sigma      & \sigma^2 &  \tau       &   \sigma\tau      &  \sigma^2\tau  \\
\hline
 \tau      & \sigma^{-1}\tau      & \sigma^{-2}\tau & 1  & \sigma^{-1} & \sigma^{-2}  \\
 \sigma\tau      & \tau      & \sigma^{-1}\tau & \sigma  & 1 & \sigma^{-1}  \\
 \sigma^2\tau      & \sigma\tau      & \tau & \sigma^2  & \sigma & 1
\end{array}\right]
=\left[\begin{array}{c|c}
\rho_{\sigma}\rho_{\sigma}  &  \rho_{\sigma}\rho_{\sigma}\cdot \tau  \\
\hline
\overline{\rho_{\sigma}\rho_{\sigma}}\cdot \tau    & \overline{\rho_{\sigma}\rho_{\sigma}}
\end{array}\right],$$
where the  matrix $\overline{\rho_{\sigma}\rho_{\sigma}}$ is obtained from the matrix $\rho_{\sigma}\rho_{\sigma}$ by interchanging the first column with the third column. Hence the above matrix can be though of as a ``twisted Kronecker product''.
\\
Assume that $W\subseteq {}^{\rho}V$. We define again the appropriate projection maps:
$$\pi_{\sigma}=\pi_{\sigma}^V:{}^{\rho}V\to {}^{\rho_{\sigma}}V,\ \ \pi_{\tau}=\pi_{\tau}^V:{}^{\rho}V\to {}^{\rho_{\tau}}V,$$
and the auxiliary varieties (everything up to Zariski closure):
$$W_{\sigma}:=\pi_{\sigma}(W),\ \ \ W_{\tau}:=\pi_{\tau}(W).$$
Before defining the notion of a $D_{\infty}$-pair, we notice the following:
\begin{IEEEeqnarray*}{rCl}
{}^{\rho}V & = & {}^{\sigma^{-1}}V\times V\times {}^{\sigma}V\times {}^{\sigma^{-1}\tau}V\times {}^{\tau}V\times {}^{\sigma\tau}V \\
 & = & {}^{\sigma^{-1}}V\times V\times {}^{\sigma}V\times {}^{\tau\sigma}V\times {}^{\tau}V\times {}^{\tau\sigma^{-1}}V\\
 &= & \mathrm{tw}^4_6\left({}^{\rho_{\sigma}}V\times {}^{{\tau}}({}^{\rho_{\sigma}}V)\right),
\end{IEEEeqnarray*}
where the map $\mathrm{tw}^4_6$ exchanges the fourth coordinate with the sixth coordinate. In particular, we get
$$\mathrm{tw}^4_6(W)\subseteq {}^{\rho_{\sigma}}V\times {}^{{\tau}}({}^{\rho_{\sigma}}V).$$
We say that $(V,W)$ is a \emph{$D_{\infty}$-pair}, if (the notion of a \emph{$(\Zz,\rho_{\sigma})$-pair} below should be easy to guess):
\begin{enumerate}
\item $W$ projects dominantly on $V$;

\item $(W_{\sigma},\mathrm{tw}^4_6(W))$ is a $C_2$-pair;

\item $(W_{\tau},W)$ is a $(\Zz,\rho_{\sigma})$-pair.
\end{enumerate}
We discuss now the general case of a semi-direct product. Let us fix a free basis $\{x_1,\ldots,x_n\}$ of $F_n$. By a theorem of Nielsen \cite{Niel}, the group $\aut(F_n)$ is generated by automorphisms belonging to the following three basic types (we merge the first ``classical'' two types into one type):
\begin{enumerate}
\item induced by a permutation of $\{x_1,\ldots,x_n\}$;

\item induced by a map $x_i\mapsto x_i^{-1}$;

\item induced by a map $x_i\mapsto x_ix_j$ for $i\neq j$.
\end{enumerate}
We can treat the actions coming from the type $(2)$ in a similar way as in the case of the group $D_{\infty}$. We can also deal with the case of the type $(1)$ automorphisms of $F_n$ using row permutations on the top of (already visible in the case of $D_{\infty}$) column permutations. However, it is not clear what to do with the type $(3)$ actions. A test case is the group $G=F_2\rtimes C_3$, where the corresponding automorphism of $F_2$ of order 3 is given by:
$$x_1\mapsto x_1^{-1}x_2,\ \ x_2\mapsto x_1^{-1}.$$
For any possible choice of a sequence of generators $\rho$, the matrix $\rho\rho$ is not going to be a ``twisted Kronecker product'' anymore, so in this case we have only the procedure coming from the Basse-Serre theory as explained in Section \ref{sechnngen}.

\section{Model-theoretic properties}\label{secmt}
In this section, we describe the model-theoretic properties of the theories obtained in Theorem \ref{mainthm}. In short, all the new theories do not fit nicely into the (neo-)stability hierarchy, i.e. they are not NTP$_2$ (Theorem \ref{nonntp}).

We recall the necessary notions from the theory of profinite groups. For a field $L$, we denote by $\gal(L)$ the \emph{absolute Galois group} of $L$. For a discrete group $G$, we denote by $\widehat{G}$ its \emph{profinite completion} and by $\widehat{G}(p)$ its \emph{$p$-profinite completion} (see \cite[Remark 17.4.7]{FrJa}). For a profinite group $H$, we denote by $\widetilde{H}\to H$ the \emph{universal Frattini cover} of $H$ (see \cite[ Prop. 22.6.1]{FrJa}). The notation $A\leqslant_cB$ means that $A$ is a closed  subgroup of $B$. A profinite group $H$ is \emph{small}, if for any $n>0$, there are finitely many open subgroups of $H$ of index $n$ (so, a field $K$ is \emph{bounded} if and only if $\gal(K)$ is small).

\subsection{General properties}\label{secgenpr}
Let us fix a marked group $(G,\rho)$, where the fixed sequence of generators $\rho$ is finite. In this subsection, we recall results from \cite{Sjo} and \cite{Hoff3}, which apply in our case.
Assume that $(K,\rho)$ is an existentially closed $G$-field.
\begin{theorem}[Sj\"{o}gren \cite{Sjo}]\label{sjres}
\begin{enumerate}
\item By \cite[Theorem 3]{Sjo}, the field $K$ is PAC.

\item By \cite[Theorem 6]{Sjo}, we have the following isomorphism:
$$\gal(K)\cong \ker(\widetilde{\widehat{G}}\to \widehat{G}).$$
\end{enumerate}
\end{theorem}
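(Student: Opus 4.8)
Both assertions are recalled from the literature, so the essential task is to match the present set-up to Sjögren's and then invoke \cite{Sjo}; I nonetheless sketch the mechanism I would use. For the PAC-ness in $(1)$, I would show that every geometrically integral affine variety $V$ over $K$ acquires a rational point in some $G$-field extension of $(K,\rho)$, after which existential closedness finishes the argument, since the existence of a point on $V$ is expressed by a quantifier-free formula over $K$. To build such an extension, take a $K$-generic point $a$ of $V$; then $K(a)/K$ is regular, and the point is to extend the finitely many automorphisms $\rho_1,\dots,\rho_m$ to a field containing $K(a)$ while respecting the relations in $P$. This is exactly the prolongation problem handled in Section \ref{secgen}: using the fiber-product/amalgamation constructions underlying the $G$-kernel formalism, one produces a $G$-field in which $a$ survives, and geometric integrality of $V$ guarantees that no unexpected algebraic dependence is forced on the $G$-conjugates of $a$.

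For $(2)$, the plan is Galois-theoretic. Since $K$ is PAC, its absolute Galois group $\gal(K)$ is projective, and the elementary theory of $(K,\rho)$ is controlled by the profinite data consisting of $\gal(K)$ together with the way $G$ acts on the finite $G$-equivariant Galois extensions of $K$. I would package this data as a short exact sequence $1\to \gal(K)\to \Gamma\to \widehat{G}\to 1$, where $\Gamma$ is the profinite group of automorphisms of $K^{\mathrm{sep}}$ restricting on $K$ to an element of $G$ (formed at the finite level and passed to the inverse limit). The content of $(2)$ is then that for an existentially closed $(K,\rho)$ this extension is the universal Frattini cover $\widetilde{\widehat{G}}\to \widehat{G}$, whence $\gal(K)$ is identified with its kernel.

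To pin down $\Gamma\cong \widetilde{\widehat{G}}$ I would check the two defining features of the universal Frattini cover. First, $\Gamma$ is projective, which is forced by existential closedness: one can solve every finite embedding problem over $(K,\rho)$ that is not already obstructed by the group data, and each such solution is realized by a suitable $G$-field extension, hence pulled back into $K$. Second, the kernel $\gal(K)$ lies in the Frattini subgroup of $\Gamma$, i.e. the cover is genuinely a Frattini cover; here one argues that no proper $G$-equivariant subextension splits off, again by producing a generic solution to the relevant embedding problem. Universality of the Frattini cover then completes the identification.

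The main obstacle is this second part: converting existential closedness of the $G$-field into the precise statement that \emph{exactly} the Frattini embedding problems are solved, so that $\Gamma$ is neither too large (extra solvable embedding problems) nor too small (a non-Frattini quotient persisting) relative to $\widetilde{\widehat{G}}$. This requires a careful analysis of the interaction between the external acting group $G$ and the internal group $\gal(K)$, and it is precisely the point at which I would rely on Sjögren's analysis rather than reconstruct it.
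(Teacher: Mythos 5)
This theorem is stated in the paper purely as a citation of Sj\"{o}gren's results (\cite[Theorems 3 and 6]{Sjo}) with no proof given, and your proposal correctly recognizes this and ultimately defers to the same source. Your sketches of the underlying mechanisms (regularity of $K(V)/K$ plus a $G$-equivariant extension for PAC-ness; identifying the relevant profinite extension of $\widehat{G}$ as the universal Frattini cover via projectivity and the Frattini property) are consistent with the standard arguments in the cited literature, so there is nothing to compare against within the paper itself.
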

If we assume that the theory $\gtcf$ exists, then the appropriate results from  \cite{Hoff3} give:
\begin{itemize}
\item a description of the algebraic closure in models of $\gtcf$;

\item an ``almost quantifier elimination'' for $\gtcf$ (similarly, as in the case of ACFA);

\item a description of the completions of $\gtcf$;

\item the geometric elimination of imaginaries for $\gtcf$.
\end{itemize}
We separately quote one more result from \cite{Hoff3} which is of particular importance for us.
\begin{theorem}[Corollary 4.29 in \cite{Hoff3}]\label{mainhoff}
If $(K,\rho)\models \gtcf$, then $\theo(K)$ is simple if and if the theory $\gtcf$ is simple.
\end{theorem}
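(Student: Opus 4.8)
The plan is to prove the two implications of the biconditional separately, connecting both to the boundedness of $\gal(K)$.

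\textbf{The easy implication} ($\gtcf$ simple $\Rightarrow$ $\theo(K)$ simple). Here I would pass to the complete theory $\theo(K,\rho)$ of the given model and use that a reduct of a simple theory is simple. Concretely, the language of pure fields is a sublanguage of the $G$-field language, so any pure-field formula witnessing the tree property for $\theo(K)$ is \emph{a fortiori} a $G$-field formula; since the reduct of a saturated model of $\theo(K,\rho)$ is a saturated model of $\theo(K)$, a tree of parameters witnessing the tree property for $\theo(K)$ survives and witnesses it for $\theo(K,\rho)$. Taking contrapositives yields the implication. (By Theorem \ref{sjres} the group $\gal(K)$ depends only on $G$, so the simplicity status of $\theo(K,\rho)$ is the same for every model of $\gtcf$, matching the stated formulation.)

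\textbf{The hard implication} ($\theo(K)$ simple $\Rightarrow$ $\gtcf$ simple). The starting point is again Theorem \ref{sjres}: the field $K$ is PAC and $\gal(K)\cong\ker(\widetilde{\widehat{G}}\to\widehat{G})$. As a closed subgroup of the universal Frattini cover $\widetilde{\widehat{G}}$, the group $\gal(K)$ is projective, which is consistent with $K$ being PAC. I would then invoke the classification of simple PAC fields (due to Chatzidakis): a PAC field has a simple theory exactly when its absolute Galois group is small, that is, when $K$ is bounded. Thus the hypothesis ``$\theo(K)$ simple'' is equivalent to ``$\gal(K)$ is small'', and the theorem reduces to showing that smallness of $\gal(K)$ forces $\gtcf$ to be simple.

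To establish simplicity of $\gtcf$ I would exhibit a symmetric, transitive independence relation satisfying local character, extension, and the independence theorem over algebraically closed sets, and then apply the Kim--Pillay characterization. The natural candidate declares $A$ and $B$ independent over $C$ when the $G$-substructures they generate are algebraically independent over $C$ in the underlying bounded PAC field. The almost quantifier elimination for $\gtcf$ from \cite{Hoff3}, together with its explicit description of algebraic closure, lets me reduce statements about types in $(K,\rho)$ to field-theoretic data decorated by the action of $G$, so that symmetry, local character, and extension transfer directly from the corresponding properties of the simple PAC field $K$.

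The genuine difficulty, which I expect to be the main obstacle, is the \emph{independence theorem} (three-amalgamation). Given $\acl$-independent bases carrying types that agree over the base, the independence theorem for PAC fields produces a field-theoretic amalgam; the remaining task is to extend the automorphisms $\rho_1,\dots,\rho_m$ coherently to the separable closure of this amalgam so that the defining relations in Assumption \ref{assume} are preserved and the result is again existentially closed. This is precisely an embedding problem for the absolute Galois group of the amalgam mapping onto the prescribed pieces, and it is solvable exactly because $\gal(K)$ is projective and small: projectivity provides a solution of the embedding problem, while smallness ensures the solution can be chosen compatibly and keeps the amalgam a model of $\gtcf$. It is at this Galois-theoretic gluing of the $G$-action — not at any purely field-theoretic step — that the hypothesis on $\theo(K)$, i.e.\ smallness of $\gal(K)$, is indispensable.
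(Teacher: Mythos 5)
The paper does not actually prove this statement: it is imported verbatim as Corollary 4.29 of \cite{Hoff3}, so there is no internal argument to measure yours against — you are reconstructing a black box. That said, your easy implication is correct and complete: $\theo(K)$ is the theory of a reduct of a model of a completion of $\gtcf$, reducts of simple theories are simple, and Theorem \ref{sjres}(2) guarantees that the answer does not depend on the chosen model. Your reduction of the hard implication, via the Chatzidakis--Hrushovski characterization of simple PAC fields, to the claim ``$\gal(K)$ small $\Rightarrow$ $\gtcf$ simple'' is also the right first move (and is legitimate here because existentially closed $G$-fields are perfect).

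The gap is in your final paragraph. You correctly isolate the independence theorem as the crux, but then dispose of it with the assertion that the relevant embedding problem ``is solvable exactly because $\gal(K)$ is projective and small.'' That is not an argument, and it is not even aimed at the right group: the obstruction to amalgamating the $G$-actions lives in the absolute Galois group of the field compositum of $\acl(Ea)$, $\acl(Eb)$, $\acl(Ec)$ — one must extend the three given partial $G$-field isomorphisms coherently to the separable closure of that compositum — and projectivity of $\gal(K)$ does not by itself solve an embedding problem attached to a proper subfield of $K$. Nor is smallness what makes a solution ``compatible''; what is actually needed is the machinery your citation of \cite{Hoff3} is standing in for: the description of types in $\gtcf$ by isomorphisms of $G$-field structures on algebraic closures, an amalgamation statement for the associated Galois-group data, and a verification that the amalgamated $G$-field embeds into a model of $\gtcf$ realizing the amalgamated type. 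Essentially all of the content of the theorem is concentrated in the step you wave at, so the hard implication is not proved as written.
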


\subsection{Simplicity and beyond}

The following lemma will be crucial.
\begin{lemma}\label{crucial}
Suppose $B$ is a profinite group and $A\leqslant_cB$. If the profinite group $K_B:=\ker(\beta:\widetilde{B}\to B)$ is small, then $K_A:=\ker(\alpha:\widetilde{A}\to A)$ is small as well.
\end{lemma}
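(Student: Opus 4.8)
The plan is to realize $K_A$ as a continuous quotient of $K_B$ and then to invoke the elementary fact that a continuous quotient of a small profinite group is again small. This last fact is immediate: if $G$ is small and $N\leqslant_c G$ is closed normal, then the open subgroups of $G/N$ of index $n$ correspond bijectively to the open subgroups of $G$ of index $n$ containing $N$, and there are only finitely many of the latter since $G$ is small.

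To produce the comparison map, I would first pull back the subgroup $A$ along $\beta$: set $\widetilde{B}_A:=\beta^{-1}(A)$, a closed subgroup of $\widetilde{B}$. Since $\beta$ is surjective, the restriction $\beta|_{\widetilde{B}_A}:\widetilde{B}_A\to A$ is an epimorphism, and its kernel is exactly $\ker(\beta)=K_B$ (note $K_B\subseteq\widetilde{B}_A$). Now $\widetilde{B}$ is projective, and closed subgroups of projective profinite groups are projective (see \cite{FrJa}), so $\widetilde{B}_A$ is projective as well. Using projectivity of $\widetilde{B}_A$ together with the epimorphism $\alpha:\widetilde{A}\to A$, I would lift $\beta|_{\widetilde{B}_A}$ through $\alpha$ to a continuous homomorphism $\pi:\widetilde{B}_A\to\widetilde{A}$ satisfying $\alpha\circ\pi=\beta|_{\widetilde{B}_A}$. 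The image $\pi(\widetilde{B}_A)$ is then a closed subgroup of $\widetilde{A}$ mapping onto $A$; since $\alpha:\widetilde{A}\to A$ is a Frattini cover, no proper closed subgroup of $\widetilde{A}$ surjects onto $A$, whence $\pi$ is surjective.

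It then remains to check that $\pi$ carries $K_B$ onto $K_A$. If $x\in K_B$ then $\alpha(\pi(x))=\beta(x)=1$, so $\pi(x)\in K_A$; conversely, given $y\in K_A$ pick $x\in\widetilde{B}_A$ with $\pi(x)=y$ (possible as $\pi$ is onto), and then $\beta(x)=\alpha(\pi(x))=\alpha(y)=1$, so $x\in K_B$. Hence $\pi|_{K_B}:K_B\to K_A$ is a continuous epimorphism, exhibiting $K_A$ as a quotient of $K_B$, and the first paragraph concludes the argument. The one genuinely non-formal input is the existence of the map $\pi$, i.e.\ the functoriality of the universal Frattini cover with respect to the closed inclusion $A\leqslant_c B$; this is where both the projectivity of $\widetilde{B}_A$ and the Frattini property of $\alpha$ are used, and I expect this step to be the main point to get right.
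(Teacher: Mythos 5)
Your proposal is correct and follows essentially the same route as the paper: both reduce to producing a continuous epimorphism $K_B\to K_A$ (quotients of small profinite groups being small), form $\widetilde{B}_A=\beta^{-1}(A)$, use its projectivity to lift $\beta|_{\widetilde{B}_A}$ through $\alpha$, and restrict the resulting epimorphism $\pi:\widetilde{B}_A\to\widetilde{A}$ to the kernels. Your write-up in fact spells out two points the paper leaves implicit (surjectivity of $\pi$ via the Frattini property of $\alpha$, and surjectivity of $\pi|_{K_B}$ onto $K_A$), so nothing is missing.
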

\begin{proof} By \cite[Remark 16.10.3(d)]{FrJa}, it is enough to show that there is a continuous epimorphism $K_B\to K_A$. Consider the following commutative diagram:
\begin{equation*}
 \xymatrix{ K_A \ar[d]^{}  & & \ker(\beta') \ar[d]^{} \ar[rr]^{=} \ar[ll]_{\pi'} & &  K_B \ar[d]^{} \\
 \widetilde{A} \ar[d]^{\alpha}  & & \widetilde{B}_A  \ar[rr]^{} \ar[ll]_{\pi} \ar[lld]_{\beta'} & &  \widetilde{B} \ar[d]^{\beta} \\
 A \ar[rrrr]^{} & & & &   B,}
\end{equation*}
where $\widetilde{B}_A:=\beta^{-1}(A)$, the epimorphism $\pi$ is given by the universal property of $\widetilde{A}\to A$ (since the profinite group $\widetilde{B}_A$ is projective), $\beta'$ is a restriction of $\beta$, $\pi'$ is a restriction of $\pi$, and the unmarked arrows are inclusions. Since the map $\pi$ is onto, the map $\pi'$ is the continuous epimorphisms which we wanted to show.
\end{proof}
We recall that the \emph{rank} of a profinite group $H$ is the minimal number of its topological generators.
\begin{lemma}\label{pufc}
Suppose $H$ is a pro-$p$-group of rank $m$ and let $\pi:\widehat{F_{m}}(p)\to H$ be a continuous epimorphism. Then $\pi$ is the universal Frattini cover.
\end{lemma}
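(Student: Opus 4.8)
The plan is to identify $\pi$ with the universal Frattini cover by invoking the characterization of $\widetilde{H}\to H$ as the \emph{unique} (up to isomorphism of covers) projective Frattini cover of $H$. Thus it suffices to verify two things: that the source $\widehat{F_m}(p)$ is a projective profinite group, and that $\pi$ is a Frattini cover, i.e.\ that $\ker\pi$ is contained in the Frattini subgroup $\Phi(\widehat{F_m}(p))$. Once both are established, the uniqueness statement for universal Frattini covers (in \cite{FrJa}) forces $\pi$ to be the universal Frattini cover of $H$.

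For projectivity I would use that $F:=\widehat{F_m}(p)$ is the free pro-$p$ group of rank $m$. As such it has $p$-cohomological dimension at most $1$, while for every prime $q\neq p$ its $q$-cohomological dimension vanishes, since a pro-$p$ group has no nontrivial finite continuous quotients of order prime to $p$. Hence $\mathrm{cd}(F)\leqslant 1$, and by the cohomological characterization of projectivity for profinite groups (\cite{FrJa}) the group $F$ is projective. This step is standard but worth flagging, since free pro-$p$ groups are projective \emph{as profinite groups} only after one checks the vanishing at primes $q\neq p$.

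The heart of the argument is the Frattini-cover condition, which I would obtain from a Burnside-basis computation on Frattini quotients. Since $\pi$ is a continuous homomorphism of pro-$p$ groups, it carries $\Phi(F)=\overline{F^p[F,F]}$ into $\Phi(H)=\overline{H^p[H,H]}$, so it induces a continuous $\Ff_p$-linear map $\bar\pi\colon F/\Phi(F)\to H/\Phi(H)$; as $\pi$ is surjective, so is $\bar\pi$. Now $F/\Phi(F)\cong \Ff_p^{\,m}$ because $F$ is free pro-$p$ of rank $m$, and $H/\Phi(H)\cong \Ff_p^{\,m}$ because $H$ has rank $m$ (Burnside basis theorem for pro-$p$ groups). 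A surjection between $\Ff_p$-vector spaces of the same finite dimension $m$ is an isomorphism, so $\bar\pi$ is injective; hence any $x\in\ker\pi$ has trivial image in $F/\Phi(F)$, i.e.\ $x\in\Phi(F)$. This gives $\ker\pi\subseteq\Phi(F)$, so $\pi$ is a Frattini cover, and combined with projectivity we conclude that $\pi$ is the universal Frattini cover.

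I expect the main point requiring care to be the bookkeeping of ranks: one must know simultaneously that the minimal number of topological generators of $H$ equals $\dim_{\Ff_p}H/\Phi(H)$ and that this common value $m$ coincides with the rank of $\widehat{F_m}(p)$, so that the surjection $\bar\pi$ is forced to be bijective rather than merely surjective. Everything else is a direct application of the projective/Frattini characterization of $\widetilde{H}\to H$.
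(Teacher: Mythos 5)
Your proof is correct, but it follows a genuinely different route from the paper's. The paper first quotes \cite[Corollary 22.7.8]{FrJa} to identify the universal Frattini cover of a rank-$m$ pro-$p$ group $H$ as an epimorphism $\alpha:\widehat{F_m}(p)\to H$; projectivity of $\widehat{F_m}(p)$ then lifts $\pi$ through $\alpha$ to a self-epimorphism $\gamma$ of $\widehat{F_m}(p)$, and the Hopfian property of finitely generated profinite groups (\cite[Prop.\ 2.5.2]{progps}) forces $\gamma$ to be an isomorphism, whence $\pi$ is isomorphic to $\alpha$ as a cover. You instead verify directly that $\pi$ has the two defining properties of the universal Frattini cover --- projective source and $\ker\pi\leqslant\Phi\bigl(\widehat{F_m}(p)\bigr)$ --- the latter by the Burnside basis theorem and a dimension count on Frattini quotients, and then invoke the uniqueness of the projective Frattini cover from \cite[Prop.\ 22.6.1]{FrJa}. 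Both arguments are sound; yours is in effect a self-contained re-derivation of the content of Corollary 22.7.8, while the paper's is shorter because it outsources that identification and replaces your linear algebra on $F/\Phi(F)\to H/\Phi(H)$ by the Hopfian trick. Two minor simplifications are available to you: projectivity of $\widehat{F_m}(p)$ as a profinite group (not merely as a pro-$p$ group) is exactly \cite[Prop.\ 22.7.6]{FrJa}, which the paper already cites in Lemma \ref{profree}, so the cohomological-dimension detour can be replaced by a citation; and your insistence on the equality $\mathrm{rank}(H)=\dim_{\Ff_p}H/\Phi(H)$ is indeed the one point that must be checked carefully, since it is what turns the surjection $\bar\pi$ into a bijection.
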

\begin{proof}
By \cite[Corollary 22.7.8]{FrJa}, the universal Frattini cover of $H$ is of the form $\alpha:\widehat{F_{m}}(p)\to H$. Hence we get an epimorphism $\gamma:\widehat{F_{m}}(p)\to \widehat{F_{m}}(p)$ such that $\alpha\circ\gamma=\pi$. Since $\widehat{F_{m}}(p)$ is Hopfian (see \cite[Prop. 2.5.2]{progps}), the map $\gamma$ is an isomorphism, so $\pi$ is the universal Frattini cover as well.
\end{proof}
The next result must be a folklore, but we could not find a direct reference for it, so we provide a proof.
\begin{lemma}\label{profree}
There is a closed subgroup of $\widehat{F_n}$ which is isomorphic to $\widehat{F_n}(p)$.
\end{lemma}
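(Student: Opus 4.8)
The plan is to realise $\widehat{F_n}(p)$ as the image of a continuous section of the canonical projection $\widehat{F_n}\to\widehat{F_n}(p)$; the existence of such a section will come from the projectivity of $\widehat{F_n}(p)$ in the category of all profinite groups. First I would recall that $\widehat{F_n}(p)$ is exactly the free pro-$p$ group of rank $n$, and that the composite $F_n\hookrightarrow\widehat{F_n}$ followed by the universal map into the pro-$p$ completion induces a continuous homomorphism
\[
\pi\colon \widehat{F_n}\longrightarrow \widehat{F_n}(p).
\]
This $\pi$ is onto: its image is compact, hence closed, and it contains the images of the free generators, which topologically generate $\widehat{F_n}(p)$.

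Next comes the key structural input: the free pro-$p$ group $\widehat{F_n}(p)$ is projective \emph{as a profinite group}. This holds because a free pro-$p$ group has cohomological dimension $\leqslant 1$ (its $p$-cohomological dimension is $1$, while its $\ell$-cohomological dimension vanishes for $\ell\neq p$, since a pro-$p$ group has no nontrivial cohomology with $\ell$-torsion coefficients), and $\mathrm{cd}\leqslant 1$ is equivalent to projectivity; equivalently, one invokes directly that the projective pro-$p$ groups are precisely the free ones. I would cite the relevant statements from \cite{progps} and \cite{FrJa}. Applying projectivity of $\widehat{F_n}(p)$ to the epimorphism $\pi$ together with the identity homomorphism $\mathrm{id}\colon \widehat{F_n}(p)\to\widehat{F_n}(p)$, I obtain a continuous homomorphism $s\colon \widehat{F_n}(p)\to\widehat{F_n}$ with $\pi\circ s=\mathrm{id}$. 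Here I would remark that although projectivity is usually phrased in terms of finite embedding problems, it yields liftings against arbitrary profinite epimorphisms by a routine inverse-limit argument.

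Finally, since $\pi\circ s=\mathrm{id}$, the map $s$ is injective; being a continuous injective homomorphism from a compact group into a Hausdorff group, it is a homeomorphism onto its image, so $s\bigl(\widehat{F_n}(p)\bigr)$ is a closed subgroup of $\widehat{F_n}$ isomorphic to $\widehat{F_n}(p)$, which is exactly what is required. The main obstacle in this argument is the projectivity of the free pro-$p$ group inside the category of \emph{all} profinite groups, rather than merely among pro-$p$ groups: once this is granted, the splitting of $\pi$ and the verification that $s$ is a closed embedding are purely formal.
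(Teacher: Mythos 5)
Your argument is essentially identical to the paper's: both obtain the canonical continuous epimorphism $\pi\colon\widehat{F_n}\to\widehat{F_n}(p)$, invoke projectivity of $\widehat{F_n}(p)$ as a profinite group (the paper cites \cite[Prop.~22.7.6]{FrJa}) to produce a continuous homomorphic section $s$, and use compactness of $\widehat{F_n}(p)$ to conclude that $s(\widehat{F_n}(p))$ is a closed subgroup isomorphic to $\widehat{F_n}(p)$. Your additional justifications (surjectivity of $\pi$, the cohomological-dimension route to projectivity) are correct but do not change the route.
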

\begin{proof}
Since $\widehat{F_n}(p)$ is the largest quotient of $\widehat{F_n}$ which is a pro-$p$-group (see e.g. Example 3 on page 7 of \cite{serre2002galois}), there is a continuous epimorphism $\pi:\widehat{F_n}\to \widehat{F_n}(p)$. By \cite[Prop. 22.7.6]{FrJa}, the profinite group $\widehat{F_n}(p)$ is projective, hence $\pi$ has a continuous section $s:\widehat{F_n}(p)\to \widehat{F_n}$ which is a group homomorphism. Since $s$ is continuous and $\widehat{F_n}(p)$ is compact Hausdorff, then $s(\widehat{F_n}(p))$ is a closed subgroup of $\widehat{F_n}$ which is isomorphic to $\widehat{F_n}(p)$.
\end{proof}
We prove below our main result about the kernels of Frattini covers. The proof uses some ideas from Section 8 of \cite{Sjo}.
\begin{theorem}\label{notsmall}
Suppose $G$ is an infinite, finitely generated and virtually free group, which is not free. Then the profinite group
$$\ker(\widetilde{\widehat{G}}\to \widehat{G})$$
is not small.
\end{theorem}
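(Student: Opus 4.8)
The plan is to reduce, via the contrapositive of Lemma \ref{crucial}, the non-smallness of $\ker(\widetilde{\widehat{G}}\to\widehat{G})$ to the non-smallness of the analogous kernel for a conveniently chosen closed subgroup $A\leqslant_c\widehat{G}$. Concretely, I would look for a \emph{finitely generated}, \emph{infinite}, non-free pro-$p$ closed subgroup $A$ of $\widehat{G}$, for a suitable prime $p$; the point of ``non-free'' is that $A$ will carry $p$-torsion, whereas free pro-$p$ groups are torsion-free.

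To build such an $A$, I start from the group theory of $G$. Since $G$ is virtually free but not free, it is not torsion-free, so it contains an element $g$ of prime order $p$; and since $G$ is infinite and finitely generated it has a free normal subgroup $F\cong F_n$ of finite index with $n\geqslant 1$, on which $g$ acts by conjugation. As $F$ is torsion-free, $g\notin F$ and $\langle g\rangle\cap F=1$, so $H:=F\rtimes\langle g\rangle=F_n\rtimes C_p$ is a subgroup of finite index in $G$; hence its closure in $\widehat{G}$ is its full profinite completion $\widehat{H}\cong\widehat{F_n}\rtimes C_p$, an open (hence closed) subgroup of $\widehat{G}$. Inside $\widehat{F_n}$, Lemma \ref{profree} provides a finite-rank free pro-$p$ closed subgroup $P\cong\widehat{F_n}(p)$; choosing $P$ to be $g$-invariant, I set $A:=P\rtimes\langle g\rangle$. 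This $A$ is pro-$p$ (an extension of the pro-$p$ group $P$ by $C_p$), infinite, of finite rank $\leqslant n+1$, and contains the order-$p$ element $g\notin P$, so it is not free pro-$p$.

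With $A$ in hand the computation is short. Being infinite with $p$-torsion, $A$ is not procyclic, so its rank $m$ satisfies $m\geqslant 2$; by Lemma \ref{pufc} the universal Frattini cover of $A$ is $\beta:\widehat{F_m}(p)\to A$, and $N:=\ker\beta$ is a closed normal subgroup of the free pro-$p$ group $\widehat{F_m}(p)$. Since $A$ is not free pro-$p$, the cover $\beta$ is not an isomorphism, so $N\neq 1$; and since $A$ is infinite, $N$ has infinite index. A nontrivial closed normal subgroup of infinite index in a free pro-$p$ group of rank $\geqslant 2$ has infinite rank (the pro-$p$ analogue of the statement that a nontrivial finitely generated normal subgroup of a free group has finite index), and a free pro-$p$ group of infinite rank is not small, since it has infinitely many open subgroups of index $p$ (these correspond to the codimension-one $\Ff_p$-subspaces of its infinite-dimensional Frattini quotient). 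Hence $N=\ker(\widetilde{A}\to A)$ is not small, and Lemma \ref{crucial} upgrades this to the non-smallness of $\ker(\widetilde{\widehat{G}}\to\widehat{G})$.

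The main obstacle is the clause ``choosing $P$ to be $g$-invariant'': Lemma \ref{profree} produces $P$ as the image of an \emph{arbitrary} continuous section of $\widehat{F_n}\to\widehat{F_n}(p)$, which need not be preserved by conjugation by $g$. What is really needed is a $\langle g\rangle$-equivariant continuous section of $\widehat{F_n}\to\widehat{F_n}(p)$, equivalently a realization of the pro-$p$ group $\widehat{F_n}(p)\rtimes C_p$ as a \emph{closed subgroup} of $\widehat{H}$ rather than merely as a quotient. I expect this to be the delicate point, to be handled by an equivariant projectivity argument (a $p$-Sylow of $\widehat{H}$ containing $g$ surjects onto $\widehat{F_n}(p)\rtimes C_p$, and one extracts from it a finite-rank $g$-invariant free pro-$p$ subgroup) in the spirit of Section~8 of \cite{Sjo}; should an equivariant section be unavailable, one falls back on the $g$-invariant finite-rank free subgroup of $F$ generated by a single orbit $\{g^i x g^{-i}\}$ and repeats the argument with that smaller free part in place of $F_n$.
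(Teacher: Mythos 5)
Your reduction scheme is the paper's: both arguments funnel everything through Lemma \ref{crucial}, pass to a finite-index subgroup of the form $F_n\rtimes C_p$ (virtually free and not free implies not torsion-free, which the paper gets from Stallings), descend to a pro-$p$ closed subgroup of the shape $\widehat{F}(p)\rtimes C_p$ via Lemma \ref{profree}, and identify its universal Frattini cover by Lemma \ref{pufc}. Where you genuinely diverge is the endgame. The paper first shrinks the free part to the subgroup generated by a single $C_p$-orbit so that $F_k\rtimes C_p$ becomes $2$-generated, writes the explicit presentation $1\to F_{\omega}\to F_2\to F_k\rtimes C_p\to 1$, and applies exactness of pro-$p$ completion to exhibit the Frattini kernel concretely as $\widehat{F_{\omega}}(p)$, which is visibly not small. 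You instead argue abstractly: the kernel is a nontrivial closed normal subgroup of infinite index in a free pro-$p$ group of rank $\geqslant 2$, hence of infinite rank by the pro-$p$ analogue of Greenberg's theorem (a finitely generated closed normal subgroup of a nonabelian free pro-$p$ group is trivial or open), and any pro-$p$ group of infinite rank has infinitely many open subgroups of index $p$. This is a legitimate and arguably cleaner route --- it makes the orbit trick and the induced-topology/exactness verification unnecessary --- at the price of importing that normal-subgroup theorem, which the paper never needs.

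The obstacle you flag is real, and you have located it exactly where the paper's own proof is thinnest: the assertion that $\widehat{F_k}(p)\rtimes C_p$ sits as a closed subgroup of $\widehat{F_k}\rtimes C_p$ is justified in the paper only by citing Lemma \ref{profree}, whose section $s:\widehat{F_k}(p)\to\widehat{F_k}$ comes from projectivity and carries no $C_p$-equivariance; the semidirect product $\widehat{F_k}(p)\rtimes C_p$ is itself not projective (it has torsion), so one cannot simply rerun the argument of Lemma \ref{profree} for it. Be aware, however, that your second fallback does not repair this: passing to the free subgroup generated by a single orbit $\{g^ixg^{-i}\}$ is precisely what the paper does, but there it serves only to make the group $2$-generated, and after that reduction one still needs the same equivariant embedding of $\widehat{F_k}(p)\rtimes C_p$ into $\widehat{F_k}\rtimes C_p$. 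So the one step you leave open is also the step the paper leaves essentially unargued; your first suggestion (an equivariant projectivity/Sylow argument in the spirit of Section 8 of \cite{Sjo}) is the right place to look, but as written neither your main line nor your fallback closes it.
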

\begin{proof}
Let us fix a free normal subgroup $F_n\triangleleft G$ of finite index. By \cite{Stall1}, $G$ is not torsion-free. Hence, there is a prime $p$ such that $C_p<G$. Clearly, the intersection $F_n\cap C_p$ is trivial, hence $F_n\rtimes C_p$ is a finite index subgroup of $G$. Then $\widehat{F_n\rtimes C_p}$ is a finite index closed subgroup of $\widehat{G}$, so by Lemma \ref{crucial}, we may assume that $G=F_n\rtimes C_p$. Let $\{x_1,\ldots,x_n\}$ be a set of free generators of $F_n$, and $H$ be a subgroup of $G$ generated by the orbit $C_p\cdot x_1$. Since
$$\widehat{H}\rtimes C_p\leqslant_c \widehat{G},$$
we can assume (using Lemma \ref{crucial}) that
$$G=H\rtimes C_p\cong F_k\rtimes C_p,$$
for some $k>0$. Then $G$ is generated by two elements ($x_1$ and a generator of $C_p$), so we have the following exact sequence:
$$1\to F_{\omega}\to F_{2}\to F_k\rtimes C_p\to 1.$$
The kernel is of the right form, by the well-known result saying that if $F$ is a finitely generated free group and $N$ is a nontrivial normal subgroup of infinite index, then $N$ is not finitely generated (see e.g. Exercise 7 in \cite[Section 1.B]{Hatcher}).

The map $F_{2}\to F_k\rtimes C_p$ splits over $F_k$, let $\beta$ be the splitting map and we set:
$$F:=\beta(F_k)<F_2.$$
For any normal subgroup $N<F_{\omega}$ such that $[F_{\omega}:N]$ is a finite power of $p$, we have that $[F_2:NF]$ is a finite power of $p$ and $NF\cap F_{\omega}=F$. Hence, the pro-$p$ topology of $F_2$ induces on $F_{\omega}$ its full pro-$p$ topology, and by \cite[Prop. 3.2.5, Lemma 3.2.6]{progps}, the $p$-profinite completion is an exact functor in this case. Therefore, we get the following exact sequence:
$$1\to \widehat{F_{\omega}}(p)\to \widehat{F_{2}}(p)\to \widehat{F_k}(p)\rtimes C_p\to 1.$$
By Lemma \ref{profree}, the profinite group $\widehat{F_k}(p)\rtimes C_p$ can be considered as a closed subgroup of the profinite group
$$\widehat{F_k\rtimes C_p}=\widehat{F_k}\rtimes C_p.$$
Hence, using Lemma \ref{crucial}, we just need to notice that the map
$$\widehat{F_{2}}(p)\to \widehat{F_k}(p)\rtimes C_p$$
is the universal Frattini cover (since the kernel of this map is the profinite group $\widehat{F_{\omega}}(p)$ which is clearly not small), and this follows from Lemma \ref{pufc}.
\end{proof}
\begin{theorem}\label{nonntp}
Assume that $G$ is a finitely generated, virtually free group. Then the theory
$\gtcf$ is simple if and only if, $G$ is free or $G$ is finite. Moreover, if $\gtcf$ is not simple, then it is not $\ntp_2$. \end{theorem}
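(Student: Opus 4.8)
My plan is to deduce both claims from the model theory of PAC fields, using Theorem \ref{sjres} to compute the absolute Galois group of a model and Theorem \ref{mainhoff} to transfer simplicity between the pure field and the $G$-field. Fix a model $(K,\rho)\models\gtcf$; by Theorem \ref{sjres}(1) the field $K$ is PAC, and we may assume it perfect by Remark \ref{acfno}(1), while Theorem \ref{sjres}(2) gives $\gal(K)\cong\ker(\widetilde{\widehat{G}}\to\widehat{G})$. For the implication that $G$ free or finite yields a simple theory I would just quote the classical facts: when $G=F_n$ the theory $\gtcf$ is $\acfa_n$, which is (super)simple, and when $G$ is finite it is $G-\tcf$, shown to be simple in \cite{HK3}.

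For the converse I argue by contraposition. Assume $G$ is neither free nor finite, so that $G$ is infinite and not free. Then Theorem \ref{notsmall} tells us that $\ker(\widetilde{\widehat{G}}\to\widehat{G})$ is not small, hence $\gal(K)$ is not small, i.e. the PAC field $K$ is unbounded. By Chatzidakis' theorem that a perfect PAC field is simple if and only if its absolute Galois group is small, $\theo(K)$ is not simple, and therefore $\gtcf$ is not simple by Theorem \ref{mainhoff}. Combined with the previous paragraph, this establishes the biconditional: $\gtcf$ is simple exactly when $G$ is free or finite.

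For the moreover part, suppose $\gtcf$ is not simple. By the biconditional just proved, $G$ is infinite and not free, so as above the PAC field $K$ underlying any model is unbounded. Here I would invoke the stronger dichotomy for PAC fields, namely that a PAC field which is not bounded has $\mathrm{TP}_2$, so that $\theo(K)$ is not $\ntp_2$. Since $\ntp_2$ passes to reducts and the pure field $K$ is a reduct of the $G$-field $(K,\rho)$, the failure of $\ntp_2$ for $\theo(K)$ forces its failure for $\gtcf$, which is therefore not $\ntp_2$.

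The simplicity half is a formal assembly of Theorems \ref{notsmall}, \ref{sjres} and \ref{mainhoff} with the PAC dichotomy, and the reduct step for $\ntp_2$ is soft. I expect the main obstacle to be the external input that an \emph{unbounded} PAC field fails $\ntp_2$: one must check that its hypotheses match our setting (perfectness of the models and the identification of unboundedness with non-smallness of $\gal(K)$) and that it applies uniformly in all characteristics, which is precisely where the reduction to perfect base fields from Remark \ref{acfno}(1) is used.
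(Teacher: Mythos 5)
Your proposal is correct and follows essentially the same route as the paper: PAC-ness and the Galois group computation from Theorem \ref{sjres}, non-smallness from Theorem \ref{notsmall}, and Chatzidakis' dichotomy for unbounded PAC fields, with the reduct/expansion step to pass to $\gtcf$. The only cosmetic difference is that the paper handles both claims at once by quoting only the NTP$_2$ version of Chatzidakis' result (non-NTP$_2$ already implies non-simple), whereas you invoke the boundedness-simplicity equivalence and Theorem \ref{mainhoff} separately for the simplicity half.
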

\begin{proof}
We already know that if $G$ is free or finite, then the theory $\gtcf$ is simple. Assume that $G$ is infinite, finitely generated, virtually free and not free. Let $(K,\rho)\models G-\mathrm{TCF}$. By Theorem \ref{sjres}(1), the field $K$ is PAC. By Theorem \ref{sjres}(2) and Theorem \ref{notsmall}, the field $K$ is not bounded. It is enough now to use a result of Chatzidakis (see Section 3.5 in \cite{zoepac0}) saying that if a PAC field $K$ is not bounded, then the theory $\theo(K)$ is not NTP$_2$.
\end{proof}
If the marked group $(G,\rho)$ is finitely generated, $(K,\rho)$ is an existentially closed $G$-field and $C$ is its field of constants, then (using results from \cite{Sjo}) $C$ is PAC and the profinite group $\gal(C)$ is finitely generated (being the Frattini cover of the profinite completion of $G$). Therefore, $\gal(C)$ is small and $\mathrm{Th}(C)$ is simple. If we combine this observation with Remark \ref{nonntp}, the conclusion goes quite against our intuition from the ACFA case, where $\theo(K)$ was stable (being algebraically closed) and $C$ was ``the only source of instability''. In our case, after replacing stability with simplicity, the opposite happens: $\theo(K)$ is not simple and $\theo(C)$ is simple.

As it was recently communicated to us by Nick Ramsey, it is likely that for a finitely generated and virtually free group $G$ the theory
$\gtcf$ is NSOP$_1$. If it is the case, then we can use the results of Chatzidakis from \cite{zoepac} about the relations between $\theo(K)$ and $\theo(S\gal(K))$, where for a profinite group $H$, $SH$ is a certain $\omega$-sorted structure which is functorially obtained from $H$.
\begin{remark}
The formalism of $G$-fields includes some cases of \emph{pairs} or \emph{triples} (etc.) of fields, which we explain here. Let
$$G=C_2\ast C_2=\langle \sigma\rangle\ast\langle \tau\rangle\cong D_{\infty}.$$
The structure $(K,\sigma,\tau)$ is inter-definable with the structure $(K,C_{\sigma},C_{\tau})$, where $C_{\sigma},C_{\tau}$ are the corresponding constant fields. Hence this structure can be understood either as a \emph{triple} of fields $(K,C_{\sigma},C_{\tau})$ or perhaps as an \emph{amalgamation} of the fields $C_{\sigma}$ and $C_{\tau}$ (inside the field $K$ which is both definable in the field $C_{\sigma}$ and in the field $C_{\tau}$).

This observation can be generalized to groups of the form $B_1\ast \dots \ast B_k$ for finite $B_i$, and even (with a more complicated amalgamation notion) to the groups considered in Section \ref{secampr}.
\end{remark}

\section{Going further}\label{seclast}
In this section, we discuss possible generalizations of our main result (Theorem \ref{mainthm}). Such generalizations can go into two directions:
\begin{enumerate}
\item finding \emph{necessary} conditions about $G$ for the existence of $\gtcf$;

\item dropping the assumption that $G$ is finitely generated.
\end{enumerate}
In Section \ref{seczrtz}, we give a new example of a group $G$ such that $\gtcf$ does not exist. In Section \ref{secnes}, we state and discuss our conjecture about the item $(1)$ and in Section \ref{secarb}, we discuss the case of arbitrary (i.e. not necessarily finitely generated) groups.

\subsection{The case of $G=\Zz\rtimes \Zz$}\label{seczrtz}
It is known (see \cite{Kikyo1}) that the theory $(\Zz\times \Zz)-\tcf$ does not exist. D. Hoffmann and the second author asked in \cite[Question 5.4(3)]{HK3}
whether it is true that $G-\tcf$ exists if and only if $\Zz\times \Zz$ does not embed into $G$. Let us consider the group $G=\Zz\rtimes \Zz$, where even numbers act trivially on $\Zz$ and odd numbers act by the multiplication by $-1$. We have the following presentation:
$$\Zz\rtimes \Zz=\langle \sigma,\tau\ |\ \tau\sigma=\sigma^{-1}\tau\rangle.$$
In this subsection, we will prove the following.
\begin{theorem}\label{thmzrtz}
For any action of the group $\Zz$ on itself by automorphisms, the theory $(\Zz\rtimes \Zz)-\tcf$ does not exist.
\end{theorem}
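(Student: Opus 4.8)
The plan is to split the theorem according to the action, using that $\aut(\Zz)\cong C_2$ forces the action to be either trivial or the sign action. For the trivial action one has $\Zz\rtimes\Zz=\Zz\times\Zz$, and the non-existence of $(\Zz\times\Zz)-\tcf$ is precisely the result quoted from \cite{Kikyo1}, so no new argument is needed there. The entire content therefore lies in the sign action, where $G=\langle\sigma,\tau\mid\tau\sigma=\sigma^{-1}\tau\rangle$, and my goal is to reduce its non-existence to the already known $\Zz\times\Zz$ case.

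The group-theoretic observation driving the reduction is that $\tau^2$ commutes with $\sigma$: from $\tau\sigma\tau^{-1}=\sigma^{-1}$ one gets $\tau^2\sigma\tau^{-2}=\sigma$, so $N:=\langle\sigma,\tau^2\rangle\cong\Zz\times\Zz$ is a subgroup of index $2$. Arguing by contradiction, I would assume $(\Zz\rtimes\Zz)-\tcf=T^\ast$ exists and use it to produce a model companion for the theory of $(\Zz\times\Zz)$-fields, contradicting \cite{Kikyo1}. The logical glue is the standard characterization that an inductive theory with amalgamation has a model companion if and only if its class of existentially closed models is closed under ultraproducts. Restriction along $N\leqslant G$ sends a model of $T^\ast$ to the $(\Zz\times\Zz)$-field $(K,\sigma,\tau^2)$, and the resulting reduct class $\mathcal R$ is pseudo-elementary, hence automatically closed under ultraproducts. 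I would then establish three ingredients: (i) the restriction to the finite-index subgroup $N$ of an existentially closed $G$-field is again existentially closed; (ii) every $(\Zz\times\Zz)$-field embeds into the $N$-reduct of some $G$-field; and (iii) the class of $(\Zz\times\Zz)$-fields has the amalgamation property. A short ultraproduct chase using (i)--(iii) then shows that the existentially closed $(\Zz\times\Zz)$-fields are closed under ultraproducts, so $(\Zz\times\Zz)-\tcf$ exists, which is the desired contradiction.

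The technical heart of both (i) and (ii) is a single extension principle for the finite extension $N\leqslant G$, which I expect to be the main obstacle: given a $(\Zz\times\Zz)$-field $(L,s,t)$ with $s,t$ commuting, one must build a field extension $L'\supseteq L$ carrying automorphisms $\sigma',\tau'$ with $\sigma'|_L=s$, $(\tau')^2|_L=t$ and $\tau'\sigma'\tau'^{-1}=\sigma'^{-1}$; in other words a ``square root'' $\tau'$ of the automorphism $t$ that inverts $\sigma'$ under conjugation. The compatibility computation $(\tau')^2\sigma'(\tau')^{-2}=\sigma'$ shows there is no formal obstruction, so $\tau'$ can be produced by freely adjoining the $\langle\tau'\rangle$-orbit of $L$ and then extending $s$ to $\sigma'$ by the twist rule; in the setting of (i) this must be done relative to the given base and so as to extend the ambient $\tau$. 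The delicate point is that the naive coinduced object $\mathrm{Coind}_N^G L$ is a product of fields rather than a field, so one has to pass to an honest $G$-field extension while preserving the truth of the relevant quantifier-free condition. This extension principle is exactly the inverse of the prolongation dimension count that fails for $\Zz\times\Zz$ (the non-dominant projection recorded in the remark after Theorem \ref{algproofgen}), and an alternative route worth keeping in reserve is to bypass the reduction and rerun the argument of \cite{Kikyo1} directly for $G$, using the relation $\tau\sigma\tau^{-1}=\sigma^{-1}$ in place of commutativity to exhibit an ultraproduct of existentially closed $G$-fields that is not existentially closed.
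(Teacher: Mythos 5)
Your splitting of the statement into the trivial action (where the result is literally \cite{Kikyo1}) and the sign action is correct and matches the paper, but your main route for the sign action has a fatal gap: ingredient (ii) is false. Not every $(\Zz\times\Zz)$-field embeds into the $N$-reduct of a $(\Zz\rtimes\Zz)$-field for $N=\langle\sigma,\tau^2\rangle$. Take $L=\Qq(\zeta)$ with $\zeta$ a primitive third root of unity, $s=\mathrm{id}$ and $t(\zeta)=\zeta^2$; this is a $(\Zz\times\Zz)$-field. Any automorphism $\tau'$ of any field $M\supseteq L$ permutes the two primitive third roots of unity, hence restricts to an automorphism of $\Qq(\zeta)$, a field whose automorphism group has order two; therefore $(\tau')^2(\zeta)=\zeta\neq\zeta^2=t(\zeta)$, so no $G$-field extension can induce $t$ as the square of its distinguished automorphism. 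Consequently the pseudo-elementary class $\mathcal{R}$ of $N$-reducts is not cofinal among $(\Zz\times\Zz)$-fields, your ultraproduct chase cannot reach all existentially closed $(\Zz\times\Zz)$-fields, and no contradiction with \cite{Kikyo1} (which concerns the full class of $(\Zz\times\Zz)$-fields) is obtained. Ingredient (i) is also a substantial unproven claim, and the natural route to it --- lifting a quantifier-free condition solvable in an $N$-extension to one solvable in a $G$-extension --- is blocked by the very same example; I am not disputing the ultraproduct criterion itself, but without (i) and (ii) the reduction collapses.

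Your fallback --- rerunning the Hrushovski--Kikyo argument directly for the relation $\tau\sigma=\sigma^{-1}\tau$ --- is exactly what the paper does, but it is only mentioned in your proposal, not executed, and it is not a one-line adaptation. The paper replaces the condition $\sigma(c)=\tau(c)$ of \cite{Kikyo1} by $\tau(c)=-c$, proves a bespoke analogue of Kikyo's Lemma 3.1 producing such a $c$ with $c+\sigma(c)+\cdots+\sigma^{n-1}(c)=0$ and nonvanishing proper partial sums (Lemma \ref{a3}, which requires a carefully twisted extension of $\tau$ to the transcendentals $x_i$), constructs by hand the extension of $\tau$ to the cube roots $y_i$ with alternating factors $\zeta,\zeta^2$ so that $\sigma'\tau'=\tau'\sigma'^{-1}$ survives (Lemma \ref{a4}), and derives the final contradiction (Lemma \ref{a1}) using the oddness of $n$ in an essential way before concluding via $\aleph_0$-saturation (Theorem \ref{a5}). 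None of these verifications appear in your proposal, so as written the proof is incomplete on both routes.
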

Since there are only two actions of $\Zz$ on $\Zz$ by automorphisms (the trivial one and the one described above), in the course of proving Theorem \ref{thmzrtz} we may assume that $G=\Zz\rtimes \Zz$, where the action is described above. We follow (to some extend) Hrushovski's proof of the non-existence of $(\Zz\times \Zz)-\tcf$ as presented in \cite{Kikyo1}. There are several twists in the argument regarding the action of $\tau$ comparing with the case of $G=\Zz\times \Zz$, we comment on them in Remark \ref{twist}.

The equality $\tau\sigma=\sigma^{-1}\tau$ implies the following commutation rules:
$$\sigma^n\tau=\tau\sigma^{-n},\ \ \ \sigma^{-n}\tau=\tau\sigma^n,\ \ \ \sigma\tau^{2n}=\tau^{2n}\sigma,\ \ \ \sigma\tau^{2n+1}=\tau^{2n+1}\sigma^{-1}.$$
Let $\zeta$ be a primitive third root of unity, which is fixed in this subsection.

We extract below the very conclusion of the argument from \cite{Kikyo1}. This conclusion works both for the case of $G=\Zz\times \Zz$ and for the case of $G=\Zz\rtimes \Zz$.
\begin{lemma}\label{a1}
There is no $(\Zz\rtimes \Zz)$-field $(K,\sigma,\tau)$ containing $\zeta$ such that for some $b\in K$ and for some odd $n\in \Nn$ we have:
\begin{enumerate}
\item $\sigma(\zeta)=\tau(\zeta)=\zeta^2$;

\item $\sigma^n(b)=\zeta^ib$ for some $i\in \{0,1,2\}$;

\item $\tau(b)=\zeta\sigma(b)$.
\end{enumerate}
\end{lemma}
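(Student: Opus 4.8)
The plan is to argue by contradiction: suppose such a $(\Zz\rtimes\Zz)$-field $(K,\sigma,\tau)$, an element $b$ (which I may take to be nonzero, since for $b=0$ conditions $(2)$ and $(3)$ are vacuous), and an odd $n\in\Nn$ all exist, and then extract a contradiction purely by algebraic manipulation of the three defining relations together with the group-theoretic commutation rule $\tau\sigma=\sigma^{-1}\tau$. The driving idea is to evaluate the single element $\tau(\sigma^n(b))$ in two different ways: once by first simplifying $\sigma^n(b)$ via $(2)$, and once by pushing $\tau$ past $\sigma^n$ via the commutation rule and then invoking $(3)$. The parity of $n$ is exactly what makes the two computations incompatible.

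Concretely, first I would record the operator identity $\tau\sigma^n=\sigma^{-n}\tau$, obtained by an immediate induction from $\tau\sigma=\sigma^{-1}\tau$ (acting on the left on $K$). I would also note how powers of $\sigma$ act on $\zeta$: since $\sigma(\zeta)=\zeta^2=\zeta^{-1}$, the automorphism $\sigma$ inverts $\zeta$, so $\sigma^k$ fixes $\zeta$ when $k$ is even and sends $\zeta\mapsto\zeta^2$ when $k$ is odd. Now I compute $\tau(\sigma^n(b))$ two ways. On one hand $(2)$ gives $\sigma^n(b)=\zeta^i b$, so $\tau(\sigma^n(b))=\zeta^{2i}\tau(b)$ using $\tau(\zeta)=\zeta^2$. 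On the other hand $\tau(\sigma^n(b))=\sigma^{-n}(\tau(b))$, and feeding in $(3)$ together with $\sigma^{-n}(\zeta)=\zeta^2$ (here the oddness of $n$ is used) yields $\sigma^{-n}(\tau(b))=\zeta^2\sigma^{1-n}(b)$. Substituting $\tau(b)=\zeta\sigma(b)$ into the first expression, the two computations combine into the single relation
$$\zeta^{2i+1}\sigma(b)=\zeta^2\sigma^{1-n}(b).$$

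The final step applies $\sigma^{n-1}$ to both sides. Since $n-1$ is even, $\sigma^{n-1}$ fixes $\zeta$; moreover $\sigma^{n-1}(\sigma(b))=\sigma^n(b)=\zeta^i b$ by $(2)$, while $\sigma^{n-1}(\sigma^{1-n}(b))=b$. Using $\zeta^{3i}=1$, the left side collapses to $\zeta b$ and the right side to $\zeta^2 b$, so that $\zeta b=\zeta^2 b$; as $b\neq 0$ this forces $\zeta=\zeta^2$, contradicting the primitivity of $\zeta$. I do not expect a genuine obstacle here, since the argument is a short unwinding of the relations; the point that requires care is the bookkeeping of how the parity of the various $\sigma$-exponents governs the action on $\zeta$ (odd exponents invert it, even exponents fix it). This parity interplay, rooted in the relation $\tau\sigma=\sigma^{-1}\tau$ that distinguishes $\Zz\rtimes\Zz$ from $\Zz\times\Zz$, is precisely what powers the contradiction, and one should note that the three cases $i\in\{0,1,2\}$ are handled uniformly, since only $\zeta^{3i}=1$ is ever used.
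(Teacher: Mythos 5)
Your proof is correct and takes essentially the same route as the paper's: both evaluate a single element two ways (you use $\tau(\sigma^n(b))$, the paper uses $\sigma^n(\tau(b))$) via the commutation rule $\tau\sigma=\sigma^{-1}\tau$ together with conditions $(1)$--$(3)$, and both extract the contradiction from the fact that an odd power of $\sigma$ sends $\zeta$ to $\zeta^2\neq\zeta$. Your explicit exclusion of $b=0$ (under which the statement would be trivially falsified) is a small point of extra care that the paper leaves implicit when it cancels $\sigma(b)$.
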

\begin{proof}
As at the end of the proof of \cite[Theorem 3.2]{Kikyo1}, we have (using the items $(2)$ and $(3)$):
\begin{IEEEeqnarray*}{rCl}
\sigma^n(\tau(b)) & = & \sigma^n(\zeta\sigma(b))\\
 &= & \sigma^n(\zeta)\sigma(\sigma^{n}(b))\\
 &= & \sigma^n(\zeta)\sigma(\zeta^ib)\\
 & = & \sigma^n(\zeta)\sigma(\zeta^i)\sigma(b).
\end{IEEEeqnarray*}
Using the items  $(1)$ and $(2)$, we get:
\begin{IEEEeqnarray*}{rCl}
b & = & \sigma^{-n}(\zeta^i)\sigma^{-n}(b)\\
 &= & \sigma^{-n}(\zeta)^i\sigma^{-n}(b)\\
 &= & \left(\zeta^2\right)^i\sigma^{-n}(b)\\
 &= & \left(\zeta^i\right)^{-1}\sigma^{-n}(b).
\end{IEEEeqnarray*}
Therefore, we have $\sigma^{-n}(b)=\zeta^ib$, and using the item $(3)$ we get:
\begin{IEEEeqnarray*}{rCl}
\sigma^n(\tau(b)) & = & \tau(\sigma^{-n}(b))\\
 &= & \tau(\zeta^ib)\\
 & = &\sigma(\zeta^i)\zeta\sigma(b).
\end{IEEEeqnarray*}
We obtain that $\sigma^n(\zeta)=\zeta$, which gives a contradiction (as in the proof of \cite[Theorem 3.2]{Kikyo1}), since $\sigma^j(\zeta)=\zeta$ if and only if $j$ is even, and $n$ is odd.
\end{proof}
We prove now the counterpart of \cite[Lemma 3.1]{Kikyo1} for the semi-direct product case.
\begin{lemma}\label{a3}
Assume that $(F,\sigma,\tau)$ is an existentially closed $(\Zz\rtimes \Zz)$-field. Then for any $n>2$, there is $c\in F$ such that
$$\tau(c)=-c,\ \ \  c+\sigma(c)+\ldots+\sigma^{n-1}(c)=0,$$
and for all $k<n$ we have:
$$ c+\sigma(c)+\ldots+\sigma^{k-1}(c)\neq 0.$$
\end{lemma}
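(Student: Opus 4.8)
The plan is to use that $(F,\sigma,\tau)$ is existentially closed in order to reduce the statement to an explicit extension construction. The three displayed conditions on $c$ together form a quantifier-free formula $\varphi(c)$ in the language of $(\Zz\rtimes\Zz)$-fields: two field equations, $\tau(c)=-c$ and $c+\sigma(c)+\cdots+\sigma^{n-1}(c)=0$, together with the finitely many inequations $c+\sigma(c)+\cdots+\sigma^{k-1}(c)\neq 0$ for $1\le k\le n-1$. Hence $\exists c\,\varphi(c)$ is an existential sentence, and it suffices to exhibit \emph{some} $(\Zz\rtimes\Zz)$-field extension $(L,\sigma,\tau)\supseteq(F,\sigma,\tau)$ containing an element satisfying $\varphi$; existential closedness of $F$ then produces such an element already in $F$.

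To build $L$, I would realise the prescribed $\sigma$-orbit of $c$ as a cyclic orbit of length $n$ subject to exactly one linear relation. Concretely, put $L:=\mathrm{Frac}\big(F[X_0,\ldots,X_{n-1}]/(X_0+\cdots+X_{n-1})\big)$, write $c_i$ for the image of $X_i$, and set $c:=c_0$; thus $c_0,\ldots,c_{n-2}$ are algebraically independent over $F$ while $c_{n-1}=-(c_0+\cdots+c_{n-2})$. I extend the two automorphisms by $\sigma|_F=\sigma_F$ with $\sigma(c_i)=c_{i+1}$, and $\tau|_F=\tau_F$ with $\tau(c_i)=-c_{-i}$ (all indices read mod $n$). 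Each extension is well defined on $L$ because it preserves the defining ideal: $\sigma$ sends $\sum_i X_i$ to $\sum_i X_{i+1}=\sum_i X_i$, and the $\tau_F$-semilinear map $\tau$ sends it to $-\sum_i X_i$, both lying in $(\sum_i X_i)$; both maps are invertible, so they descend to automorphisms of the quotient domain and then of $L$.

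It then remains to verify the group relation and the conditions $\varphi(c)$. For the relation one checks on generators that $\tau\sigma(c_i)=\tau(c_{i+1})=-c_{-i-1}$ while $\sigma^{-1}\tau(c_i)=-\sigma^{-1}(c_{-i})=-c_{-i-1}$, so $\tau\sigma=\sigma^{-1}\tau$ holds on the $c_i$, and it holds on $F$ by hypothesis; since $L$ is generated by $F$ and the $c_i$, the two endomorphisms $\tau\sigma$ and $\sigma^{-1}\tau$ agree on all of $L$, so $(L,\sigma,\tau)$ is a genuine $(\Zz\rtimes\Zz)$-field extension. Finally, $\tau(c)=\tau(c_0)=-c_0=-c$; the full sum $c+\sigma(c)+\cdots+\sigma^{n-1}(c)=c_0+\cdots+c_{n-1}=0$ by the imposed relation; and for $1\le k\le n-1$ the partial sum $c_0+\cdots+c_{k-1}$ is a sum of distinct members of the algebraically independent family $c_0,\ldots,c_{n-2}$ (here one uses $k-1\le n-2$), hence nonzero. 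This yields $\varphi(c)$ in $L$, and the construction works verbatim for every $n\ge 2$.

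The one genuinely delicate point, and the place where the argument departs from Hrushovski's proof for $\Zz\times\Zz$ in \cite{Kikyo1}, is arranging the $\tau$-action compatibly with the \emph{twisted} relation $\tau\sigma=\sigma^{-1}\tau$. For $\Zz\times\Zz$ one could use the straight action $c_i\mapsto -c_i$, whereas the semidirect twist forces the reflection $c_i\mapsto -c_{-i}$; I expect the only real work to be confirming that this reflection simultaneously preserves the linear relation $\sum_i c_i=0$ and reproduces the group law, while the dominance/nonvanishing of the partial sums and the application of existential closedness are routine.
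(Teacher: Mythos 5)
Your proposal is correct and follows essentially the same route as the paper: the same field $L=F(c_0,\ldots,c_{n-2})$ with $c_{n-1}=-(c_0+\cdots+c_{n-2})$, the same cyclic extension of $\sigma$, the same twisted reflection $\tau(c_i)=-c_{-i}$ (the paper writes this as $\tau'(x_i)=-x_{n-i}$), the same verification of $\tau\sigma=\sigma^{-1}\tau$ on generators, and the same appeal to existential closedness. Your presentation via the quotient $F[X_0,\ldots,X_{n-1}]/(\sum_i X_i)$ and preservation of the defining ideal is only a cosmetic repackaging of the paper's observation that the various $(n-1)$-element subfamilies are algebraically independent and generate the same field.
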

\begin{proof}
Let $x_0,\ldots,x_{n-2}$ be algebraically independent over $F$, and we set
$$x_{n-1}:=-(x_0+\dots+x_{n-2}).$$
Then we have:
\begin{itemize}
\item $x_0,\ldots,x_{n-2}$ are algebraically independent over $F$;

\item $x_1,\ldots,x_{n-1}$ are algebraically independent over $F$;

\item $F(x_0,\ldots,x_{n-2})=F(x_1,\ldots,x_{n-1})$.
\end{itemize}
Hence, we can expand $\sigma$ to an automorphism $\sigma'$ of $L:=F(x_0,\ldots,x_{n-2})$ so that
$$\sigma'(x_i)=x_{i+1}$$
for $i\in \{0,\ldots,n-2\}$. In particular we get (it will be useful later):
\begin{IEEEeqnarray*}{rCl}
\sigma'^{-1}\left(x_{n-1}\right) & = & -\left(\sigma'^{-1}(x_0)+\sigma'^{-1}(x_1)+\dots+\sigma'^{-1}(x_{n-2})\right), \\
x_{n-2} & = & -\left(\sigma'^{-1}(x_0)+x_0+\dots+x_{n-3}\right), \\
\sigma'^{-1}(x_0) & = & x_{n-1}, \\
\sigma'(x_{n-1}) &= & x_0.
\end{IEEEeqnarray*}
We also have:
\begin{itemize}
\item $-x_0,-x_{n-1},-x_{n-2},\ldots,-x_2$ are algebraically independent over $F$;

\item $F(x_0,\ldots,x_{n-2})=F(-x_0,-x_{n-1},-x_{n-2},\ldots,-x_2)$.
\end{itemize}
Hence, we can extend $\tau$ to an automorphism $\tau'$ of $F$ in such a way that:
$$\tau'(x_0)=-x_0,\ \ \tau'(x_1)=-x_{n-1},\ \ \tau'(x_2)=-x_{n-2},\ \ \ldots\ ,\ \ \tau'(x_{n-2})=-x_2.$$
We will see that $(L,\sigma',\tau')$ is a $(\Zz\rtimes \Zz)$-extension of $(F,\sigma,\tau)$. Since $(L,\sigma',\tau')$ satisfies our conclusion for $c:=x_0$ and $(F,\sigma,\tau)$ is existentially closed, we will be then done. We check below (on the generators $x_0,\ldots,x_{n-2}$) that $\sigma'\tau'=\tau'\sigma'^{-1}$.
\\
For $i=0$, we compute:
\begin{IEEEeqnarray*}{rCl}
 \tau'\left(\sigma'^{-1}(x_0)\right) & = & \tau'(x_{n-1}) \\
& = & -\left(\tau'(x_0)+\tau'(x_1)+\tau'(x_2)+\ldots+\tau'(x_{n-2})\right) \\
& = & -\left(-x_0-x_{n-1}-x_{n-2}-\ldots-x_{2}\right) \\
& = & x_{n-1}+x_0+x_2+x_3+\ldots+x_{n-2} \\
 &= & -x_1\\
 &= & \sigma'(-x_0)\\
 &= & \sigma'(\tau'(x_0)).
\end{IEEEeqnarray*}
For $i=1$, we compute:
\begin{IEEEeqnarray*}{rCl}
\sigma'\left(\tau'(x_1)\right) & = & -\sigma'\left(x_{n-1}\right) \\
 &= & -x_0\\
 & = & \tau'(x_0) \\
 &= & \tau'\left(\sigma'^{-1}(x_1)\right).
\end{IEEEeqnarray*}
And finally, for $i=2,\ldots,n-2$, we compute:
\begin{IEEEeqnarray*}{rCl}
\sigma'(\tau'(x_i)) & = & \sigma'(-x_{n-i}) \\
 &= & -x_{n-i+1}\\
 & = & \tau'(x_{i-1}) \\
 &= & \tau'\left(\sigma'^{-1}(x_i)\right). \qedhere
\end{IEEEeqnarray*}
\end{proof}
Below is the counterpart of \cite[Claim 3.2.1]{Kikyo1}.
\begin{lemma}\label{a4}
Let $(K,\sigma,\tau)$ be an existentially closed $(\Zz\rtimes \Zz)$-field such that $\zeta\in K$ and $\sigma(\zeta)=\tau(\zeta)=\zeta^2$. Then for any $c\in K$ the following holds:
\\
IF for all $k>0$ we have
$$\tau(c)=-c,\ \ \ c+\sigma(c)+\ldots+\sigma^k(c)\neq 0,$$
THEN there are $a,b\in K$ such that
$$\sigma(a)=a+c,\ \ \ b^3=a,\ \ \ \tau(b)=\zeta\sigma(b).$$
\end{lemma}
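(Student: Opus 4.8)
The plan is to exhibit a $(\Zz\rtimes\Zz)$-field extension $(L,\sigma',\tau')$ of $(K,\sigma,\tau)$ in which the desired $a,b$ live; since $(K,\sigma,\tau)$ is existentially closed and the conclusion is an existential statement over $K$ (in the language of $(\Zz\rtimes\Zz)$-fields, with parameters $c,\zeta$), this produces $a,b$ already inside $K$. First I would record what the hypotheses give. Setting $e_i:=c+\sigma(c)+\dots+\sigma^{i-1}(c)$ for $i>0$, $e_0:=0$, and extending to all $i\in\Zz$ by the recurrence $e_{i+1}=c+\sigma(e_i)$, the hypotheses force $c\neq 0$ and, more importantly, that the $e_i$ are pairwise distinct: the difference $e_i-e_j$ equals (after applying a power of $\sigma$) one of the partial sums $c+\sigma(c)+\dots+\sigma^{k}(c)$ assumed nonzero.

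Next I would take $a_0$ transcendental over $K$ and extend $\sigma,\tau$ to automorphisms $\bar\sigma,\bar\tau$ of $K(a_0)$ by $\bar\sigma(a_0)=\bar\tau(a_0)=a_0+c$; these are automorphisms because $a_0+c$ is again a transcendental generator, with inverses sending $a_0$ to $a_0-\sigma^{-1}(c)$, resp. $a_0-\tau^{-1}(c)$. A short computation on $K$ and on $a_0$ (using $\tau(c)=-c$ and the relation in the form $\tau\sigma^{-1}=\sigma\tau$) shows $\bar\sigma\bar\tau=\bar\tau\bar\sigma^{-1}$. Writing $f_i:=\bar\sigma^i(a_0)=a_0+e_i$, the relation then gives formally $\bar\sigma(f_i)=f_{i+1}$ and $\bar\tau(f_i)=\bar\sigma^{-i}\bar\tau(a_0)=\bar\sigma^{-i}(f_1)=f_{1-i}$.

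Now comes the core construction. Since $\zeta\in K$ we have $\ch(K)\neq 3$, and because the $f_i=a_0+e_i$ are pairwise distinct monic linear polynomials they are non-associate primes in the UFD $K[a_0]$, hence $\Ff_3$-independent in $K(a_0)^\ast/(K(a_0)^\ast)^3$. Thus the Kummer extension $L:=K(a_0)(b_i:i\in\Zz)$ with $b_i^3=f_i$ is a field (the ideal $(x_i^3-f_i)_i$ is prime) and is Galois over $K(a_0)$. The key device is: if a field automorphism $\phi$ of $K(a_0)$ permutes the family $(f_i)$ by a bijection $p$ with $\phi(f_i)=f_{p(i)}$, and $\varepsilon(i)\in\{1,\zeta,\zeta^2\}$, then $\phi$ lifts to an automorphism of $L$ with $b_i\mapsto\varepsilon(i)b_{p(i)}$, because $x_i\mapsto\varepsilon(i)x_{p(i)}$ carries $x_i^3-f_i$ into the defining ideal. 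Applying this to $\bar\sigma$ with $p(i)=i+1$, $\varepsilon\equiv1$ yields $\sigma'$ with $\sigma'(b_i)=b_{i+1}$; applying it to $\bar\tau$ with $p(i)=1-i$, $\varepsilon(i)=\sigma^{-i}(\zeta)$ yields $\tau'$ with $\tau'(b_i)=\sigma^{-i}(\zeta)\,b_{1-i}$ (here $\sigma^{-i}(\zeta)$ is $\zeta$ or $\zeta^2$ by the parity of $i$, so $\varepsilon(i)^3=1$ and the cube-root relations are preserved).

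Finally I would verify $\sigma'\tau'=\tau'\sigma'^{-1}$ on $L$. It already holds on $K(a_0)$, so it suffices to check it on each $b_i$, where both sides compute to $\sigma^{1-i}(\zeta)\,b_{2-i}$; this is the single place where the twist $\sigma(\zeta)=\zeta^2$ of $\Zz\rtimes\Zz$ must line up. Hence $(L,\sigma',\tau')$ is a $(\Zz\rtimes\Zz)$-extension of $(K,\sigma,\tau)$, and with $a:=a_0=b_0^3$ and $b:=b_0$ we obtain $\sigma'(a)=a+c$, $b^3=a$ and $\tau'(b)=\zeta b_1=\zeta\sigma'(b)$; existential closedness then transports $a,b$ down to $K$. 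I expect the main obstacle to be the lifting step, i.e.\ proving that the prescribed shift-and-flip maps really extend to automorphisms of $L$: this rests entirely on the multiplicative independence of the $f_i$ modulo cubes, which is precisely where the hypothesis that all partial sums $c+\sigma(c)+\dots+\sigma^{k}(c)$ are nonzero is consumed (it makes the $e_i$, hence the $f_i$, distinct). The relation check is the secondary delicate point, being sensitive to the semidirect twist.
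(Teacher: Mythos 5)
Your proposal is correct and follows essentially the same route as the paper's proof: the same transcendental extension $K(x)$ with $\sigma'(x)=\tau'(x)=x+c$, the same verification of $\sigma'\tau'=\tau'\sigma'^{-1}$, the same tower of cube roots $y_i^3=x_i$ with $\sigma'(y_i)=y_{i+1}$ and the identical twist $\tau'(y_i)=\sigma^{-i}(\zeta)\,y_{1-i}$, followed by existential closedness. The only (harmless) difference is that you justify the lifting of the automorphisms via explicit Kummer-theoretic independence of the $f_i$ modulo cubes, where the paper invokes ``basic Galois theory.''
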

\begin{proof}
Let $x$ be a transcendental element over $K$. We extend $\sigma,\tau$ to automorphisms $\sigma',\tau'$ of $K(x)$ such that
$$\sigma'(x)=\tau'(x)=x+c.$$
Then we have $\sigma'^{-1}(x)=x-\sigma^{-1}(c)$, and we compute:
\begin{IEEEeqnarray*}{rCl}
\sigma'\left(\tau'(x)\right) & = & \sigma'\left(x+c\right) \\
 &= & x+c+\sigma(c)\\
 & = & x+c-\sigma(-c) \\
 & = & x+c-\sigma\left(\tau(c)\right) \\
 & = & \tau'(x)-\tau\left(\sigma^{-1}(c)\right) \\
 & = & \tau'\left(x-\sigma^{-1}(c)\right) \\
 &= & \tau'\left(\sigma'^{-1}(x)\right).
\end{IEEEeqnarray*}
Therefore $(K(x),\sigma',\tau')$ is a $(\Zz\rtimes \Zz)$-field.
\\
After setting $c_0:=0$ and $x_0:=x$, for any $n>0$ we have:
$$x_n:=\sigma'^n(x)=x+c+\sigma'(c)+\ldots+\sigma'^{(n-1)}(c)=:x+c_n,$$
$$x_{-n}:=\sigma'^{-n}(x)=x-\sigma'^{-1}(c)-\ldots-\sigma'^{-n}(c)=:x+c_{-n}.$$
It is easy to see (using the assumption about $c$) that for any $i,j\in \Zz$, if $i\neq j$, then we have $c_i\neq c_j$. We also have:
$$\tau'(x_0)=x_1,\ \ \tau'\left(x_{-1}\right)=x_2,\ \ \tau'(x_{-2})=x_3,\ \ \tau'(x_{-3})=x_4,\ \ \ldots\ ;$$
$$\tau'(x_1)=x_0,\ \ \tau'(x_2)=x_{-1},\ \ \tau'(x_3)=x_{-2},\ \ \tau'(x_4)=x_{-3},\ \ \ldots\ .$$
For each $n\in \Zz$, we choose $y_n\in K(x)^{\alg}$ such that $y_n^3=x_n$. Since $(c_i)_{i\in \Zz}$ are pairwise different, using basic Galois theory we obtain that for each $j\in \Zz$, we have
$$y_j\notin K(x)\left(y_i\ |\ i\in\Zz\setminus \{j\}\right).$$
Let $L:=K(x)(y_i\ |\ i\in\Zz)$. Using basic Galois theory again, we see that we have enough freedom to extend $\sigma',\tau'$ to automorphisms of $L$ in the way which is explained below. We extend $\sigma'$ to $L$ by setting $\sigma'(y_i):=y_{i+1}$ for each $i\in \Zz$, and we extend $\tau'$ in the following way:
$$\tau'(y_0)=\zeta y_1,\ \ \tau'(y_{-1})=\zeta^2 y_2,\ \ \tau'(y_{-2})=\zeta y_3,\ \ \tau'(y_{-3})=\zeta^2 y_4,\ \ \ldots\ ;$$
$$\tau'(y_1)=\zeta^2y_0,\ \  \tau'(y_2)=\zeta y_{-1},\ \ \tau'(y_3)=\zeta^2 y_{-2},\ \ \tau'(y_4)=\zeta y_{-3},\ \ \ldots\ .$$
Then the difference field $(L,\sigma',\tau')$ satisfies our conclusion (for $a:=x,b:=y_0$), if we check that $\sigma'\tau'=\tau'\sigma'^{-1}$ on $L$. It is enough to check it on the elements $y_i$ for $i\in \Zz$. We do it below for $i=0,1,-1,2$:
$$\sigma'\left(\tau'(y_0)\right)=\sigma'(\zeta y_1)=\zeta^2 y_2=\tau'(y_{-1})=\tau'\left(\sigma'^{-1}(y_0)\right),$$
$$\sigma'\left(\tau'(y_1)\right)=\sigma'(\zeta^2 y_0)=\zeta y_1=\tau'(y_{0})=\tau'\left(\sigma'^{-1}(y_1)\right),$$
$$\sigma'\left(\tau'(y_{-1})\right)=\sigma'(\zeta^2 y_2)=\zeta y_3=\tau'(y_{-2})=\tau'\left(\sigma'^{-1}(y_{-1})\right),$$
$$\sigma'\left(\tau'(y_2)\right)=\sigma'(\zeta y_{-1})=\zeta^2 y_0=\tau'(y_{1})=\tau'\left(\sigma'^{-1}(y_2)\right).$$
Since $(K,\sigma,\tau)$ is an existentially closed $(\Zz\rtimes \Zz)$-field, it satisfies our conclusion as well.
\end{proof}
The main conclusion is stated (in a rather compact way) below.
\begin{theorem}\label{a5}
There are no $\aleph_0$-saturated, existentially closed $(\Zz\rtimes \Zz)$-fields containing $\zeta$ and such that $\sigma(\zeta)=\tau(\zeta)=\zeta^2$.
\end{theorem}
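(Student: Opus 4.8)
The plan is to argue by contradiction, assembling Lemmas \ref{a1}, \ref{a3} and \ref{a4} by means of $\aleph_0$-saturation. So suppose $(K,\sigma,\tau)$ were an $\aleph_0$-saturated, existentially closed $(\Zz\rtimes\Zz)$-field with $\zeta\in K$ and $\sigma(\zeta)=\tau(\zeta)=\zeta^2$. The first step is to produce inside $K$ a single element $c$ with $\tau(c)=-c$ all of whose partial sums $c+\sigma(c)+\cdots+\sigma^{k}(c)$ are nonzero. To this end I would consider the partial type $p(c)$ consisting of the formula $\tau(c)=-c$ together with the inequations $c+\sigma(c)+\cdots+\sigma^{k}(c)\neq 0$ for every $k\geqslant 0$. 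Any finite fragment of $p(c)$ involves only finitely many indices; choosing the period in Lemma \ref{a3} larger than all indices $k$ occurring in the fragment yields an element of $K$ realizing that fragment, so $p(c)$ is finitely satisfiable and hence, by $\aleph_0$-saturation, realized by some $c\in K$.

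Given such a $c$, Lemma \ref{a4} applies and furnishes $a,b\in K$ with $\sigma(a)=a+c$, $b^{3}=a$ and $\tau(b)=\zeta\sigma(b)$. The final step is to contradict Lemma \ref{a1}: it suffices to exhibit an odd $n$ and some $i\in\{0,1,2\}$ with $\sigma^{n}(b)=\zeta^{i}b$, for then the computation in the proof of Lemma \ref{a1} forces $\sigma^{n}(\zeta)=\zeta$, which is impossible for odd $n$ since $\sigma^{j}(\zeta)=\zeta$ holds exactly when $j$ is even (indeed $\sigma^{2}(\zeta)=\sigma(\zeta)^{2}=\zeta^{4}=\zeta$).

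I expect this last step to be the main obstacle, and it is precisely here that the semidirect (as opposed to direct) structure is essential. Cubing a relation $\sigma^{n}(b)=\zeta^{i}b$ gives $\sigma^{n}(a)=a$, i.e.\ $c+\sigma(c)+\cdots+\sigma^{n-1}(c)=0$, which is in tension with the nonvanishing of \emph{all} partial sums that was used to invoke Lemma \ref{a4}; so the forbidden configuration of Lemma \ref{a1} cannot simply be read off the element $b$ produced above. The real work is therefore to route the odd--period witnesses of Lemma \ref{a3} through existential closure, exploiting the twist $\tau\sigma=\sigma^{-1}\tau$ to keep the relation $\tau(b)=\zeta\sigma(b)$ intact while an odd vanishing partial sum of $c$ is transferred into the $\sigma$--periodicity of $b$. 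This is exactly the point at which the argument departs from the $\Zz\times\Zz$ case of \cite{Kikyo1} (cf.\ Remark \ref{twist}): the nonvanishing requirement of Lemma \ref{a4}, needed to choose the cube roots freely, must be reconciled with the odd vanishing partial sum that triggers Lemma \ref{a1}, and saturation together with the transfer through existential closure is what does this delicate bookkeeping.

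Once the triple $(1)$, $(2)$, $(3)$ of Lemma \ref{a1} has been located inside the $\aleph_0$-saturated model, that lemma delivers the contradiction and the theorem follows. I would stress that the hypotheses ``$\aleph_0$-saturated'' and ``containing $\zeta$ with $\sigma(\zeta)=\tau(\zeta)=\zeta^2$'' are used in two different ways: saturation is what upgrades the family of finite-period witnesses of Lemma \ref{a3} into a single element to which Lemma \ref{a4} can be applied, while the presence of $\zeta$ with the prescribed action is exactly what makes the final obstruction $\sigma^{n}(\zeta)=\zeta$ of Lemma \ref{a1} effective.
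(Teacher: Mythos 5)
There is a genuine gap, and you have in fact put your finger on it yourself without resolving it: your use of $\aleph_0$-saturation points in the wrong direction. You realize the type $p(c)$ asserting $\tau(c)=-c$ together with $c+\sigma(c)+\cdots+\sigma^{k}(c)\neq 0$ for \emph{every} $k$, and then feed that $c$ into Lemma \ref{a4}. But such a $c$ can never produce the configuration of Lemma \ref{a1}: any relation $\sigma^{n}(b)=\zeta^{i}b$ cubes to $\sigma^{n}(a)=a$, hence to $c+\sigma(c)+\cdots+\sigma^{n-1}(c)=0$, which is flatly excluded by the type you realized. So the ``tension'' you describe is not a bookkeeping issue to be massaged by ``routing witnesses through existential closure''; with your choice of $c$ the contradiction is structurally unreachable, and no amount of exploiting the twist $\tau\sigma=\sigma^{-1}\tau$ changes that.

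The paper applies saturation to Lemma \ref{a4} itself rather than to the type of $c$. Since in the $\aleph_0$-saturated model the implication from the countable set of conditions $\{\tau(c)=-c\}\cup\{c+\sigma(c)+\cdots+\sigma^{k}(c)\neq 0 : k>0\}$ to the single existential conclusion $\exists a,b\,(\sigma(a)=a+c\wedge b^{3}=a\wedge\tau(b)=\zeta\sigma(b))$ holds for every $c$, a standard saturation argument (otherwise one could realize the type together with the negation of the conclusion) yields a \emph{finite} bound $n_{0}$ such that only the conditions for $k<n_{0}$ are needed. One then invokes Lemma \ref{a3} with an \emph{odd} $n>n_{0}$ to obtain a $c$ whose partial sums are nonzero for all $k<n$ (so in particular for all $k<n_{0}$, which suffices to trigger the finitized Lemma \ref{a4}) but whose $n$-th partial sum vanishes. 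That vanishing gives $\sigma^{n}(a)=a$, hence $\sigma^{n}(b)=\zeta^{i}b$ for some $i$, and Lemma \ref{a1} applies with this odd $n$. This finitization step is the missing idea in your proposal.
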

\begin{proof}
Suppose not, and let $(K,\sigma,\tau)$ be an existentially closed $(\Zz\rtimes \Zz)$-field, which is $\aleph_0$-saturated and such that $\sigma(\zeta)=\tau(\zeta)=\zeta^2$. By saturation and Lemma \ref{a4}, there is $n_0>0$ such that for any $c\in K$, if for all $k<n_0$ we have
$$\tau(c)=-c,\ \ \ c+\sigma(c)+\ldots+\sigma^k(c)\neq 0,$$
then there are $a,b\in K$ such that
$$\sigma(a)=a+c,\ \ \ b^3=a,\ \ \ \tau(b)=\zeta\sigma(b).$$
By Lemma \ref{a3}, there is $c\in K$ such that for some odd $n>n_0$ we have
\begin{equation}
c+\sigma(c)+\ldots+\sigma^{n-1}(c)=0,\tag{$*$}
\end{equation}
and for all $k<n$ (so also for all $k<n_0$), we have:
$$\tau(c)=-c,\ \ \ c+\sigma(c)+\ldots+\sigma^k(c)\neq 0.$$
Hence, there are $a,b\in K$ such that
\begin{equation}
\sigma(a)=a+c,\ \ \ b^3=a,\ \ \ \tau(b)=\zeta\sigma(b).\tag{$**$}
\end{equation}
Using $(**)$ and $(*)$, we get:
$$\sigma^n(a)=a+c+\sigma(c)+\dots+\sigma^{n-1}(c)=a.$$
Since $b^3=a$, we get $(\sigma^n(b))^3=\sigma^n(a)=a$, so $\sigma^n(b)$ is a third root of $a$. Since $b$ is a third root of $a$ as well, there is $i\in \{0,1,2\}$ such that $\sigma^n(b)=\zeta^ib$.

Hence we are in the situation from Lemma \ref{a1}, so we get a contradiction.
\end{proof}
\begin{remark}\label{twist}
As we have pointed out, the proof of Theorem \ref{a5} follows the lines of the proof from \cite{Kikyo1}. The only technical difference is that the condition ``$\sigma(c)=\tau(c)$'' from \cite{Kikyo1} is replaced here by the condition ``$\tau(c)=-c$''. This change made some computations slightly more involved.
\end{remark}
\begin{cor}
The theory $(\Zz\rtimes \Zz)-\tcf$ does not exist.
\end{cor}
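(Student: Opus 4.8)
The plan is to deduce the Corollary directly from Theorem \ref{a5}, which already rules out the existence of $\aleph_0$-saturated existentially closed $(\Zz\rtimes \Zz)$-fields carrying the configuration $\sigma(\zeta)=\tau(\zeta)=\zeta^2$. The key observation is that if the model companion $(\Zz\rtimes \Zz)-\tcf$ existed, then its models would be exactly the existentially closed $(\Zz\rtimes \Zz)$-fields, so this class would be elementary and hence closed under elementary equivalence; in particular every model would admit an $\aleph_0$-saturated elementary extension that is again a model. So the strategy is: assume $(\Zz\rtimes \Zz)-\tcf$ exists, manufacture an $\aleph_0$-saturated existentially closed $(\Zz\rtimes \Zz)$-field in which the forbidden configuration occurs, and contradict Theorem \ref{a5}.

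First I would exhibit a single $(\Zz\rtimes \Zz)$-field realizing the configuration. Take $K_0=\Qq(\zeta)$ and let $\sigma$ and $\tau$ both be the nontrivial element of $\gal(\Qq(\zeta)/\Qq)$, so that $\sigma(\zeta)=\tau(\zeta)=\zeta^2$. One checks that this is a legitimate $(\Zz\rtimes \Zz)$-action: since $\sigma$ and $\tau$ have order $2$ on $\Qq(\zeta)$, both sides of the defining relation $\tau\sigma=\sigma^{-1}\tau$ act as the identity on $\zeta$, so the relation is respected. Next, using that $(\Zz\rtimes \Zz)-\tcf$ is the model companion, I would embed $(K_0,\sigma,\tau)$ into a model $(K,\sigma,\tau)\models (\Zz\rtimes \Zz)-\tcf$; such a model is existentially closed and, since the embedding respects $\sigma$ and $\tau$, it still satisfies $\sigma(\zeta)=\tau(\zeta)=\zeta^2$.

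Finally I would pass to an $\aleph_0$-saturated elementary extension $(K^{*},\sigma,\tau)\succ (K,\sigma,\tau)$, which exists by standard model theory. Being elementarily equivalent to a model of the model companion, $(K^{*},\sigma,\tau)$ is again a model of $(\Zz\rtimes \Zz)-\tcf$ and hence existentially closed; moreover $\zeta\in K\preceq K^{*}$ with $\sigma(\zeta)=\tau(\zeta)=\zeta^2$. Thus $(K^{*},\sigma,\tau)$ is an $\aleph_0$-saturated existentially closed $(\Zz\rtimes \Zz)$-field carrying the forbidden configuration, contradicting Theorem \ref{a5}. Therefore $(\Zz\rtimes \Zz)-\tcf$ cannot exist.

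There is essentially no hard analytic step left here: the real work was carried out in Lemmas \ref{a1}, \ref{a3}, \ref{a4} and in Theorem \ref{a5}. The only points needing care are conceptual, and this is where I expect the only (mild) friction. One must observe that the existence of a model companion forces the class of existentially closed models to be closed under $\aleph_0$-saturated elementary extensions, so that saturation is compatible with the planted configuration; and one must note that the condition $\sigma(\zeta)=\tau(\zeta)=\zeta^2$, being a quantifier-free statement about the fixed element $\zeta$, survives both the embedding into the existentially closed model and the passage to the elementary extension.
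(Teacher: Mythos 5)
Your proposal is correct and follows essentially the same route as the paper: plant the configuration $\sigma(\zeta)=\tau(\zeta)=\zeta^2$ on $\Qq(\zeta)$ (checking the relation $\tau\sigma=\sigma^{-1}\tau$ holds because both composites act trivially there), embed into an existentially closed extension, and use the hypothetical model companion to pass to an $\aleph_0$-saturated one, contradicting Theorem \ref{a5}. The extra care you take about saturation and preservation of the quantifier-free configuration is exactly what the paper's terser argument implicitly relies on.
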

\begin{proof}
Let $F=\Qq(\zeta)$ and $\sigma,\tau\in \aut(F)$ be such that $\sigma(\zeta)=\beta(\zeta)=\zeta^2$. Since $\sigma=\sigma^{-1}$ and $\sigma\tau=\tau\sigma$, the difference field $(F,\sigma,\tau)$ is a $(\Zz\rtimes \Zz)$-field. Then $(F,\sigma,\tau)$ has an existentially closed $(\Zz\rtimes \Zz)$-extension $(K,\sigma',\tau')$. If the theory $(\Zz\rtimes \Zz)-\tcf$ exists, then we can take $(K,\sigma',\tau')$ to be $\aleph_0$-saturated, which contradicts Theorem \ref{a5}.
\end{proof}

\begin{remark}\label{dirsum}
We can easily extend the results of this subsection to the case of groups $G$ which are of the following form:
$$G \cong (\Zz\rtimes \Zz)\rtimes H.$$
We just need to notice that we can always expand the actions of $\Zz\rtimes \Zz$ on the rings of polynomials to $G$, by setting $h(x_i)=x_i$ for each $h\in H$. More precisely, first  we put a $G$-ring structure on $\Qq[X]$ by applying the splitting epimorphism $G\to \Zz\rtimes \Zz$ and whichever construction for $\Zz\rtimes \Zz$ we performed above. Then, using the following ring isomorphism:
$$F[X]\cong \Qq[X]\otimes_{\Qq}F,$$
we put on $F[X]$ the tensor product $G$-ring structure and extend it to $F(X)$.
\end{remark}

\subsection{Necessity}\label{secnes}
As mentioned in Introduction, virtually free groups have many equivalent characterizations coming from different branches of mathematics (see e.g. Introduction in \cite{arauja}). We conjecture below that there is one more coming from model theory.
\begin{conjecture}\label{conj1}
Suppose that $G$ is a finitely generated group. Then the theory $\gtcf$ exists if and only if $G$ is virtually free.
\end{conjecture}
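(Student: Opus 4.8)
The \emph{if} direction is already established: it is exactly Theorem~\ref{mainthm}. So the entire content of Conjecture~\ref{conj1} lies in the \emph{only if} direction, which I would attack in its contrapositive form: \textbf{if $G$ is finitely generated and not virtually free, then the class of existentially closed $G$-fields is not elementary, i.e. $\gtcf$ does not exist.} The template for every known non-existence result (Kikyo's theorem for $\Zz\times\Zz$ from \cite{Kikyo1}, and our Theorem~\ref{thmzrtz} for $\Zz\rtimes\Zz$) is the same: one shows that any $\aleph_0$-saturated e.c. model must satisfy a uniform ``prolongability'' statement obtained from an existential axiom by saturation, and then constructs, by a transcendence-and-Galois argument, a $G$-extension forcing arbitrarily long \emph{unprolongable} configurations, contradicting that uniform bound; since a model companion would possess $\aleph_0$-saturated models, this rules it out. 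The plan is to show that such an obstruction is \emph{forced} by the failure of virtual freeness.

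First I would isolate an \textbf{inheritance principle} generalizing Remark~\ref{dirsum}: if $G$ retracts onto $H$ (i.e. there is a splitting epimorphism $G\to H$) and $H-\tcf$ does not exist, then $\gtcf$ does not exist either. The proof should follow Remark~\ref{dirsum} essentially verbatim: transport the bad $H$-configuration to $\Qq[X]$ along the splitting, pass to $F[X]\cong\Qq[X]\otimes_{\Qq}F$, and observe that a hypothetical $\aleph_0$-saturated model of $\gtcf$ restricts to one of $H-\tcf$. Together with Theorem~\ref{thmzrtz} and its extension in Remark~\ref{dirsum}, this disposes of every $G$ retracting onto $\Zz\times\Zz$ or $\Zz\rtimes\Zz$. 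The first technically delicate point is to upgrade from retracts to mere \emph{embeddings} of these groups, since non-existence is not obviously monotone under passage to arbitrary subgroups.

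The decisive step is the \textbf{structural dichotomy} supplied by Bass--Serre theory (\cite{Dicks}, Theorem~\ref{bs1}) together with Stallings' ends theorem (\cite{Stall1}): a finitely generated group fails to be virtually free exactly when it is not the fundamental group of a finite graph of finite groups. I would try to convert each way this can occur into a concrete obstruction. When an accessible decomposition exists but some vertex group is infinite, that vertex group is an infinite finitely generated group of strictly lower complexity sitting inside $G$, and an inductive use of the inheritance principle (or a direct embedding argument) should apply. The genuine obstacle is the remaining, hardest case: finitely generated groups that are \emph{not accessible} or are infinite torsion groups (Tarski-monster-type or Grigorchuk-type), which contain no $\Zz\times\Zz$ and no ``bad'' subgroup of the kind we can presently obstruct, so that the $\Zz^2$/$\Zz\rtimes\Zz$ mechanism is simply unavailable. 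For these one needs a genuinely new non-existence argument driven directly by the combinatorics of the word problem, and the guiding heuristic is the characterization of virtually free groups as exactly those with context-free word problem (one of the equivalences recalled from \cite{arauja} in the Introduction): I expect the first-order axiom scheme of Theorem~\ref{algproofgen} to be realizable precisely when the diagonal identifications encoded by Assumption~\ref{assume} are recognizable by a pushdown automaton, and that failure of this recognizability should yield the required saturation obstruction. Making this last link precise---turning the automata-theoretic boundary of virtual freeness into a model-theoretic non-axiomatizability statement---is where I expect essentially all the difficulty to reside, and it is the reason Conjecture~\ref{conj1} is stated as a conjecture rather than a theorem.
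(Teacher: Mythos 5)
The statement you are asked to prove is a \emph{conjecture}: the paper offers no proof of it, and explicitly says it is not clear how to attack it, naming infinite Burnside groups as the obstruction. What the paper does prove is (i) the ``if'' direction, which is Theorem~\ref{mainthm}, and (ii) the ``only if'' direction \emph{for finitely generated commutative groups only}, via the structure theorem: a non-virtually-free finitely generated abelian group has $\Zz\times\Zz$ as a direct summand, and Remark~\ref{dirsum} (your ``inheritance principle'' for retracts) then transports Kikyo--Hrushovski's obstruction. Your proposal correctly reconstructs exactly this partial evidence --- the saturation-based non-existence template, the retract inheritance, and the role of Theorem~\ref{thmzrtz} --- so up to that point you are in step with the paper. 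But that is where the paper stops, and it is also where your argument stops being a proof.

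The gap is the entire core of the ``only if'' direction, and you name it yourself: (a) you have no argument that non-existence of $H-\tcf$ passes from a subgroup $H\leq G$ to $G$ when $H$ is not a retract, and without this even the case of groups merely \emph{containing} $\Zz\times\Zz$ is open (this is precisely \cite[Question 5.4(3)]{HK3}, which the paper treats as open); (b) your structural dichotomy does not cover finitely generated groups that contain no $\Zz\times\Zz$, no $\Zz\rtimes\Zz$, and retract onto nothing useful --- infinite Burnside groups, Tarski monsters, inaccessible groups --- and for these you propose only a heuristic via the Muller--Schupp pushdown-automaton characterization, with no mechanism for converting failure of context-freeness of the word problem into failure of axiomatizability. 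A research program that ends with ``this is why it is stated as a conjecture'' is an accurate self-assessment, but it is not a proof, and it should not be presented as one; the honest framing is that you have verified the known special cases and identified the open problems, which is what the paper itself does in Sections~\ref{seczrtz} and~\ref{secnes}.
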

It is not clear to us how to attack Conjecture \ref{conj1}. To see the possible problems, we point out that one would have to deal with complicated groups as
infinite \emph{Burnside groups}, which are finitely generated and not virtually free (they are even torsion of bounded exponent).

On the positive side, we can confirm Conjecture \ref{conj1} for commutative groups.
\begin{theorem}
Suppose that $G$ is a finitely generated commutative group. Then the theory $\gtcf$ exists if and only if the group $G$ is virtually free.
\end{theorem}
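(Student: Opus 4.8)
The plan is to combine the structure theorem for finitely generated abelian groups with Theorem \ref{mainthm} on the positive side and the non-existence machinery of Section \ref{seczrtz} on the negative side. By the fundamental theorem, write $G \cong \Zz^r \times T$ with $T$ finite abelian and $r \geq 0$. The first task is the purely group-theoretic claim that such a $G$ is virtually free if and only if $r \leq 1$. If $r = 0$ then $G$ is finite, hence virtually free; if $r = 1$ then $\Zz \cong \Zz \times \{1\}$ is a free subgroup of finite index $|T|$, so again $G$ is virtually free. Conversely, if $r \geq 2$, then for any finite-index subgroup $G' \leq G$ the intersection $G' \cap (\Zz^r \times \{1\})$ has finite index in $\Zz^r \times \{1\}$, hence is isomorphic to $\Zz^r$ and contains a copy of $\Zz^2$. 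Were $G'$ free, this copy of $\Zz^2$ would be free as well (subgroups of free groups are free), which is impossible; thus no finite-index subgroup of $G$ is free and $G$ is not virtually free.

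For the existence direction, if $r \leq 1$ then $G$ is finitely generated and virtually free by the above, so Theorem \ref{mainthm} gives that $\gtcf$ exists. For the non-existence direction, assume $r \geq 2$ and write $G \cong (\Zz \times \Zz) \times A$, where $A := \Zz^{r-2} \times T$ is again finitely generated abelian. I would then adapt the argument of Section \ref{seczrtz} almost verbatim, but starting from the classical non-existence of $(\Zz \times \Zz)-\tcf$ (\cite{Kikyo1}) rather than from Theorem \ref{thmzrtz}; the underlying Hrushovski obstruction here uses the condition $\sigma(c) = \tau(c)$ in place of $\tau(c) = -c$ (compare Remark \ref{twist}) and rests on the same algebraic incompatibility as Lemma \ref{a1}.

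The key mechanism is the splitting epimorphism $G \to \Zz \times \Zz$, which lets us promote every $(\Zz \times \Zz)$-field extension arising in that argument to a genuine $G$-field extension exactly as in Remark \ref{dirsum}: one carries out the $(\Zz \times \Zz)$-construction on the adjoined transcendentals and lets $A$ act trivially on all of them (and on the auxiliary radicals), which is consistent precisely because $A$ is a direct factor commuting with $\sigma$ and $\tau$. Running the $A$-equivariant analogues of Lemmas \ref{a3} and \ref{a4} then forces, inside any $\aleph_0$-saturated existentially closed $G$-field containing $\zeta$ with $\sigma(\zeta) = \tau(\zeta) = \zeta^2$, the forbidden configuration of the analogue of Theorem \ref{a5}, so no such saturated model can exist and $\gtcf$ does not exist. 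I expect the only real point requiring care to be this last step, namely checking that the $A$-equivariant extensions really define $G$-field structures; this is routine since, $A$ acting trivially on the new generators, the sole relations to verify are $\sigma\tau = \tau\sigma$ and the commutation of each element of $A$ with $\sigma, \tau$ on those generators, all of which are immediate. Once this is in place, the direct-sum extension technique of Remark \ref{dirsum} carries the non-existence from $\Zz \times \Zz$ up to all of $G$, completing the proof.
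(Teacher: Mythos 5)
Your proposal is correct and follows essentially the same route as the paper: the right-to-left direction via Theorem \ref{mainthm}, and the other direction by using the structure theorem to exhibit $\Zz\times\Zz$ as a direct summand and then invoking the trivial-action case of the construction in Remark \ref{dirsum} (resting on Kikyo--Hrushovski's non-existence of $(\Zz\times\Zz)-\tcf$). You merely spell out two points the paper leaves implicit, namely that $\Zz^r\times T$ is virtually free exactly when $r\leq 1$ and that the $A$-equivariant extensions of Remark \ref{dirsum} really are $G$-field structures; both checks are correct.
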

\begin{proof}
By Theorem \ref{mainthm}, we get the right-to-left implication. For the other implication, assume that $G$ is not virtually free. By the structure theorem for finitely generated commutative groups, the group $\Zz\times \Zz$ is a direct summand of $G$. By Remark \ref{dirsum}, the theory $\gtcf$ does not exist.
\end{proof}
In the course of proving Theorem \ref{mainthm}, we have shown that if $G$ is finitely generated and virtually free, then any (finitely generated) $G$-kernel has a prolongation which is a $G$-extension. It would be interesting to know whether this is actually an algebraic description of the existence of the model companion, so we ask the following.
\begin{question}\label{kerprolmc}
Does the theory $G-\tcf$ exist if and only if any (finitely generated) $G$-kernel has a prolongation which is a $G$-extension?
\end{question}
It is possible that the arguments from Section \ref{seczrtz} can be used to show that some $(\Zz\rtimes \Zz)$-kernels do not have prolongations which are ($\Zz\rtimes \Zz$)-field extensions.

\subsection{Arbitrary groups}\label{secarb}
To consider the case of arbitrary groups, one needs to put the (additive) group $\Qq$ (since $\Qq-\tcf=\Qq$ACFA exists, see \cite{med1}) and virtually free groups into a common context. However, it is easy to see that each virtually free group is also \emph{locally virtually free} (i.e. each finitely generated subgroup is virtually free), and $\Qq$ is locally virtually free (even locally cyclic, which was crucial in \cite{med1}) as well. Hence we can generalize Conjecture \ref{conj1} the following.
\begin{conjecture}\label{conj2}
Let $G$ be a group. Then the theory $\gtcf$ exists if and only if $G$ is locally virtually free.
\end{conjecture}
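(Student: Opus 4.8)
The plan is to treat the two implications separately, reducing each to the finitely generated situation already settled in this paper and, for the converse, to Conjecture \ref{conj1}. Throughout I work in the language of $G$-fields $(K;+,\cdot,(g)_{g\in G})$ and use the basic observation that every quantifier-free formula mentions only finitely many group elements, hence lives inside a finitely generated subgroup $H\leqslant G$, which is virtually free by the local hypothesis.

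For the direction ``locally virtually free $\Rightarrow$ existence'', the candidate axioms for $\gtcf$ are the union, over all finitely generated $H\leqslant G$, of the $H$-pair axioms of Theorem \ref{mainthm}, read inside $G$ through the generators of $H$; each such sentence is first-order since it mentions only finitely many of the symbols $g$. The heart of the argument is an \emph{induction-of-actions lemma} generalizing Proposition \ref{noncons}: given a $G$-field $K$ and an $H$-field extension $K\subseteq L$, the $H$-action on $L$ extends to a $G$-action on some field $M\supseteq L$ restricting to the original $G$-action on $K$. I would produce $M$ by extending an $H$-set transcendence datum of $L/K$ to a free $G$-set and realizing the resulting permutation action by field automorphisms of a free compositum of translates indexed by $G/H$, exactly as in the ``$G$ acting on pure sets'' step underlying Proposition \ref{noncons}; this generalizes the treatment of $\Qq$ in \cite{med1}. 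With this lemma, both halves of ``model of the axioms $\Leftrightarrow$ existentially closed'' follow the pattern of Theorem \ref{algproofgen}: for $(\Leftarrow)$ a test formula $\exists x\,\varphi(x)$ solvable in a $G$-extension is solvable in the $H$-reduct, and the associated $H$-axiom returns a solution in $K$; for $(\Rightarrow)$ an $H$-pair yields an $H$-field extension by Theorem \ref{mainthm}, which the induction lemma promotes to a $G$-field extension, after which existential closedness of $K$ gives the point. This would also answer Question \ref{kerprolmc} affirmatively in the locally virtually free case. The main obstacle in this direction is checking that the induced $G$-action is well defined and genuinely restricts to the original action on $K$ (the transversal bookkeeping for $G/H$), together with the uniform first-orderness of the $H$-pair conditions across all finitely generated $H$.

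For the converse, ``existence $\Rightarrow$ locally virtually free'', I would argue by contraposition: if $G$ is not locally virtually free it contains a finitely generated non-virtually-free subgroup $H$, and the difficulty collapses onto Conjecture \ref{conj1}. Granting the method of that conjecture, $H-\tcf$ fails, witnessed as in Theorem \ref{a5} by the nonexistence of a suitably $\aleph_0$-saturated existentially closed $H$-field carrying a prescribed initial configuration. I would transfer this obstruction upward: if $\gtcf$ existed, take an $\aleph_0$-saturated model $(K,G)$ realizing that configuration, available by the extension constructions above. Saturation passes to the $H$-reduct, since an $H$-type over a finite set that is finitely satisfiable in the reduct is finitely satisfiable in $(K,G)$ and hence realized; and existential closedness passes to the $H$-reduct precisely by the induction lemma, as any $H$-extension of $(K,H)$ is first induced to a $G$-extension and then solved using closedness of $(K,G)$. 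The $H$-reduct would then be an $\aleph_0$-saturated existentially closed $H$-field of the forbidden kind, a contradiction.

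The decisive and genuinely hard step is Conjecture \ref{conj1} itself: one must provide, for every finitely generated non-virtually-free $H$ — including pathological cases such as infinite Burnside groups — an explicit first-order obstruction in the spirit of Section \ref{seczrtz}. The reduction above (induction of actions plus transfer of saturation and existential closedness to reducts) is, I expect, essentially routine; absent Conjecture \ref{conj1} the converse remains conditional, which is why I regard Conjecture \ref{conj2} as only marginally harder than Conjecture \ref{conj1}.
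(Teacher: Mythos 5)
This statement is a \emph{conjecture}: the paper offers no proof of it, only a heuristic route (realizing $G$ as a direct limit of finitely generated virtually free groups and taking a ``limit'' of the axioms of $G_n-\tcf$, generalizing \cite{med1}), which the authors explicitly flag as not yet carried out even as a ``first, and hopefully relatively easy, step''. Your proposal is therefore not comparable to a proof in the paper; it is a conditional reduction, and it contains genuine gaps in both directions. The decisive one in the forward direction is the ``induction-of-actions lemma''. This is not a routine generalization of Proposition \ref{noncons}: that proposition works because $\Zz$ acts freely on a transcendence basis, whereas here the $G$-action on $K$ is already prescribed, and the extension $L/K$ produced by the $H$-prolongation machinery is in general \emph{algebraic} in essential ways (radicals, roots of unity --- exactly the ingredients of the obstruction in Section \ref{seczrtz}). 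The translates ${}^{g}L$ for $g\in G/H$ all contain $K$ and must restrict there to the given action, so they cannot be composited freely; choosing a prime of the resulting tensor product compatibly with the $G$-action, and verifying the group relations of $G$ that involve elements outside $H$, is precisely the difficulty the paper isolates when it warns that a $G$-kernel need not prolong to a $G$-field. Your sketch (``extend an $H$-set transcendence datum to a free $G$-set'') does not use local virtual freeness anywhere, which is a strong sign it cannot be correct as stated; a uniform argument of that shape would also have to fail for $\Zz\rtimes\Zz$. Separately, the proposed axiom set (the union over all finitely generated $H\leqslant G$ of the $H$-pair axioms) requires a compatibility check --- that an $H$-pair remains, or induces, an $H'$-pair for $H\leqslant H'$ --- which is not addressed.

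In the converse direction the argument is explicitly conditional on Conjecture \ref{conj1}, which you rightly identify as the hard core; but even granting it, two further gaps remain. First, Conjecture \ref{conj1} asserts only that $H-\tcf$ does not exist; to run your transfer you need it proved in the specific form of Theorem \ref{a5} (non-existence of an $\aleph_0$-saturated existentially closed $H$-field with a prescribed configuration), which is an additional hypothesis. Second, the transfer of existential closedness from $(K,G)$ to its $H$-reduct again invokes the induction-of-actions lemma --- now for a group $G$ that is \emph{not} locally virtually free, where the lemma is even less plausible; reducts of existentially closed structures are not existentially closed in general, and this is exactly where the argument would break. What your proposal does correctly capture is the easy half of the forward direction (a quantifier-free formula lives in a finitely generated subgroup, so the $(\Leftarrow)$ implication of the axiomatization follows the pattern of Theorem \ref{algproofgen} without new input) and the correct identification of where the difficulty lies; but the two lemmas you label ``essentially routine'' are, respectively, the open content of this conjecture and an unproved strengthening of Conjecture \ref{conj1}.
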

The first, and hopefully relatively easy, step towards proving Conjecture \ref{conj2} would be trying to extend the methods from \cite{med1} to the case of an arbitrary locally virtually free group. Namely, we have:
$$\Qq\cong \coli_{n}\frac{1}{n}\Zz,$$
and the axioms of $\Qq-\tcf$ from \cite{med1} can be understood as some kind of ``limit'' of the axioms of $(\frac{1}{n}\Zz)-\tcf$(=ACFA). Similarly, for any locally virtually free group $G$, we have
$$G\cong \coli_{n}G_n,$$
where each $G_n$ is virtually free, and one can hope to obtain the axioms of $\gtcf$ as some kind of limit of the axioms of $G_n-\tcf$ (given by Theorem \ref{mainthm}).

There is an interesting parallel between the paragraph above and the axiomatization of existentially closed fields with iterative Hasse-Schmidt derivations (in the case of positive characteristic). For the necessary background, the reader is advised to consult e.g. \cite{HK}. Let $\ga$ be the additive group over a field of positive characteristic $p$, $\ga[n]$ be the kernel (considered as a finite group scheme) of the algebraic group morphism $\fr^n_{\ga}$ and $\gaf$ be the formal group scheme which is the formalization of $\ga$. Then we have:
$$\gaf\cong \coli_{n}\ga[n],$$
the algebraic actions of $\gaf$ correspond to iterative Hasse-Schmidt derivations and the algebraic actions of $\ga[n]$ correspond to $n$-truncated iterative Hasse-Schmidt derivations. Then indeed, the theory of existentially closed fields with iterative Hasse-Schmidt derivations can be considered as the limit of the theories of existentially closed fields with $n$-truncated iterative Hasse-Schmidt derivations (see \cite{K3} and, in a more general context, \cite{HK}).

The common context between $G$-fields and fields with iterative Hasse-Schmidt derivations (i.e. also differential fields) would be fields with algebraic actions of a fixed formal group scheme. A general theory of such actions is considered in e.g. \cite{MS}, \cite{MS2} and \cite{Kam}. However, we would rather not state any conjecture for the existence of a model companion in such a general case.

\bibliographystyle{plain}
\bibliography{harvard}

\end{document}